\newcommand{\R}{\mathbb{R}}
\newcommand{\inr}[1]{\left\langle #1 \right\rangle}
\newcommand{\E}{\mathbb{E}}
\newcommand{\PP}{\mathbb{P}}
\newcommand{\eps}{\varepsilon}
\newcommand{\cF}{{\cal F}}
\newtheorem{Theorem}{Theorem}[section]
\newtheorem{Lemma}[Theorem]{Lemma}
\newtheorem{Definition}[Theorem]{Definition}
\newtheorem{Corollary}[Theorem]{Corollary}
\newtheorem{Remark}[Theorem]{Remark}
\numberwithin{equation}{section}
\def \proof {\noindent {\bf Proof.}\ \ }
\def \endproof
\newcommand{\nrows}{m}
\newcommand{\la}{\lambda}
\newcommand{\C}{\mathbb{C}}
\newcommand{\bP}{\mathbb{P}}
\newcommand{\blue}{}
\newcommand{\red}{}
\newcommand{\green}{}
\begin{document}

\title{Fast metric embedding into the Hamming cube}

\date{}

\author{Sjoerd Dirksen\footnote{Utrecht University, Mathematical Institute (s.dirksen@uu.nl)}, Shahar Mendelson\footnote{University of Warwick, Department of Statistics and The Australian National University, Centre for Mathematics and its Applications (shahar.mendelson@gmail.com)}, Alexander Stollenwerk\footnote{UCLouvain, ICTEAM Institute (alexander.stollenwerk@uclouvain.be)} }

\maketitle

\begin{abstract}
We consider the problem of embedding a subset of $\R^n$ into a low-dimensional Hamming cube in an almost isometric way. We construct a simple, data-oblivious, and computationally efficient map that achieves this task with high probability: we first apply a specific structured random matrix, which we call the \blue{\emph{double circulant matrix}}; using that matrix requires linear storage and matrix-vector multiplication can be performed in near-linear time. We then binarize each vector by comparing each of its entries to a random threshold, selected uniformly \red{at random from} a well-chosen interval.

\blue{We estimate the} number of bits required for this encoding scheme \blue{in terms of} two natural geometric complexity parameters of the set -- its Euclidean covering numbers and its localized Gaussian complexity. \blue{The} estimate we derive turns out to be the best that one can hope for -- \blue{up to} logarithmic terms.

The key to the proof is a phenomenon of independent interest: \red{we show that} the double circulant matrix mimics the behavior of a Gaussian matrix in two important ways. \blue{First,} it maps an arbitrary set in $\R^n$ into a set of \red{well-spread} vectors. \green{\blue{Second,} it yields a fast near-isometric embedding of any finite subset of $\ell_2^n$ into $\ell_1^m$. This embedding achieves the same dimension reduction as a Gaussian matrix in near-linear time, under an optimal condition -- up to logarithmic factors -- on the number of points to be embedded. This improves a well-known construction due to Ailon and Chazelle.} 
\end{abstract}

\section{Introduction}

In modern data analysis one is frequently confronted with sets that contain a large number of points, and each point is represented by a high-dimensional vector. This high-dimensionality
causes significant storage consumption and comes at a high computational cost. In an attempt of addressing those issues, dimension reduction techniques have been used, for example, in clustering schemes \cite{MMR19}; computational geometry \cite{Indyk01}; and numerical linear algebra \cite{TYUC19,Woo14} (see, e.g., \cite{BDN15} and the references therein for many more examples). The idea is to map the given set into a lower-dimensional space, while preserving its key features. And obviously, what counts as a key feature changes according to the application one has in mind.
\par
The Gaussian random matrix $A \in \R^{m\times n}$, whose entries are independent, standard Gaussian random variables, is a surprisingly powerful and versatile tool that is frequently used in dimension reduction methods. The most basic result of that flavor is the (Gaussian formulation of the) Johnson-Lindenstrauss Lemma \cite{JL84}: if $f:\R^n\to\R^m$ is defined by $f(x)=\frac{1}{\sqrt{m}}Ax$, then for any finite set $T$ and $\epsilon>0$,
\begin{equation}
\label{eqn:J-L}
(1-\epsilon)\|x-y\|_2^2 \leq \|f(x)-f(y)\|_2^2 \leq (1+\epsilon)\|x-y\|_2^2, \qquad \text{for all } x,y\in T
\end{equation}
with high probability, provided that $m\gtrsim \epsilon^{-2}\log|T|$. Here, and throughout this article, $|T|$ denotes the number of points in $T$, $a\lesssim b$ means that $a\leq cb$ for an absolute constant $c>0$ and $a \sim b$ means $a \lesssim b$ and $b \lesssim a$.

The Johnson-Lindenstrauss Lemma is remarkable in at least two respects. First, the map $f$ is \emph{data-oblivious}, i.e., it is constructed without any prior information on the set one wishes to embed. This property is crucial in certain applications, e.g., one-pass streaming applications \cite{ClarksonW09} and data structural problems such as nearest neighbor search \cite{HarPeledIM12}. Second, the Johnson-Lindenstrauss embedding is in general \emph{optimal}: Larsen and Nelson \cite{LaN17} showed that if $\epsilon>\min\{n,|T|\}^{-0.49}$, \blue{then} any map $f:T\to \R^m$ that satisfies \eqref{eqn:J-L} must also satisfy that $m\gtrsim \epsilon^{-2}\log|T|$.

Despite this general optimality, the embedding dimension achieved by the Gaussian matrix can be substantially lower. Indeed, for a set $T$, let
\begin{equation} \label{eq:T-prime}
T^\prime = \left\{ \frac{x-y}{\|x-y\|_2} : x \not = y, \ x,y \in T \right\},
\end{equation}
denote by $G$ the standard Gaussian random vector and \blue{let}
\begin{equation} \label{eqn:GaussComplexity}
\ell_*(S) = \E \sup_{x\in S} |\langle G,x\rangle|
\end{equation}
be the \emph{Gaussian mean width} of a set $S$. A result due to Gordon \cite{Gordon88} shows that for $T \subset \R^n$, $f$ satisfies \eqref{eqn:J-L} with high probability if $m \gtrsim \epsilon^{-2}\ell_*^2(T^\prime)$. Gordon's result is an \emph{instance-optimal} version of the J-L lemma: if $T^\prime$ is as in \eqref{eq:T-prime}, then $\ell_*^2(T^\prime)$ is always upper-bounded by $c\log|T|$ for an absolute constant $c>0$, and it may be substantially lower if $T$ has a low-complexity structure, e.g., if it consists of sparse vectors or if it belongs to a low-dimensional subspace or manifold.
\par
It is also known that the Gaussian random matrix can be used to define dimension reduction schemes that go beyond the Euclidean setting. \blue{Most relevant to this work is the fact that} it is possible to embed subsets of $\ell_2^n$ into the Hamming cube $\{-1,1\}^m$ in an almost isometric way---by combining a Gaussian matrix with a straightforward binarization scheme (see \cite{OyR15,PlV14} when $T \subset S^{n-1}$ and \cite{DMS21a} for $T \subset \R^n$).
\par
At the same time, using the Gaussian matrix in \blue{dimension} reduction schemes is problematic from a computational perspective. Firstly, a typical realization of the matrix is fully populated and unstructured; thus, simply storing it requires plenty of memory ($O(mn)$). Secondly and more importantly, it takes significant time ($O(mn)$) to compute \red{a} matrix-vector product $Ax$. It is therefore highly desirable to find an alternative to the Gaussian matrix: specifically, some structured random matrix that requires less storage space and supports fast matrix-vector multiplication. Obviously, one would want that matrix to be as effective \red{in dimension reduction} as the Gaussian matrix, resulting in the best of both worlds: an optimal \blue{data-}oblivious embedding that is computationally efficient.
\par
\begin{tcolorbox}
\blue{Our main result achieves this goal for binary embeddings: we identify a computationally friendly replacement for the Gaussian matrix that leads to a near-isometric embedding of an arbitrary subset of $\R^n$ into a low-dimensional Hamming cube.}
\end{tcolorbox}
\blue{The heart of the proof of this result is \blue{to show} that the matrix we define - the double circulant matrix - mimics the behavior of the Gaussian matrix in two important ways: \red{it yields an almost isometric embedding of any subset of $\ell_2^n$ into $\ell_1^m$ and, \blue{at the same time,} it maps an arbitrary set in $\R^n$ into a set of \red{well-spread} vectors. We will make these statements} precise in Section~\ref{sec:introGaussBehav}. This behavior is remarkable because the double circulant matrix \red{has} limited randomness and its entries are \blue{strongly dependent}. Although this may be somewhat speculative, we believe that the Gaussian behavior exhibited by the double circulant matrix will have \blue{many additional applications}---well beyond the scope of binary embeddings.}

\subsection{\blue{Main result}}

\blue{Before stating our main result, we recall a binary embedding that is based on the Gaussian matrix and was studied in \cite{DMS21a}. For} a matrix $A\in \R^{m\times n}$ we consider the map $f: \R^n \to \{-1,1\}^m$ defined by
\begin{equation} \label{eqn:GaussBinEmdDefIntro}
f(x)=\operatorname{sign}(Ax+\tau),
\end{equation}
where $\tau$ is uniformly distributed in $[-\la,\la]^m$ and is independent of $A$, and the sign-function is applied component-wise. In what follows, $\ell_*(T)$ denotes the Gaussian mean width of a set $T$ (as in \eqref{eqn:GaussComplexity}) and $\mathcal{N}(T,\theta)$ is the Euclidean covering number of $T$ \red{at} scale $\theta$, i.e., the smallest number of open Euclidean balls of radius $\theta$ needed to cover the set $T$. Our starting point is the following fact, which was established in \cite{DMS21a}. Here and throughout, $d_H$ denotes the Hamming distance \red{on} $\{-1,1\}^m$.
\begin{Theorem} \label{thm:GaussianDitheredIntro}
There exist absolute constants $c_0,...,c_3$ such that the following holds. Let $T \subset \R^n$ and put $R=\sup_{t \in T} \|t\|_2$. Set $0<\delta\leq\frac{R}{2}$, $u\geq 1$ and let
$$
0<\theta \leq c_0 \frac{\delta}{\sqrt{\log(e\lambda/\delta)}}, \ \  \ \ \lambda \geq c_1 R\sqrt{\log(R/\delta)}.
$$
Suppose that
\begin{equation} \label{eq:est-m-intro}
m\geq c_2 \left( \lambda^2 \frac{\log {\cal N}(T,\theta)}{\delta^2}  +  \lambda \frac{\ell_*^2((T-T)\cap \theta B^n_2)}{\delta^3} \right).
\end{equation}
\blue{If $A\in \R^{m\times n}$ is} the standard Gaussian matrix and $\tau$ is uniformly distributed in $[-\lambda,\lambda]^m$ \blue{and independent of $A$}, then with probability at least $1-2\exp(-c_3\delta^2m/\lambda^2)$, the map $f(t)=\operatorname{sign}(At+\tau)$ satisfies 
\begin{equation} \label{eqn:GaussianDitheredText}
\sup_{x,y\in T}\left|\frac{\sqrt{2\pi}\lambda}{m}d_H(f(x),f(y))-\|x-y\|_2\right|\leq  \delta.
\end{equation}
\end{Theorem}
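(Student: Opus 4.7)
The plan is to condition on the Gaussian matrix $A$, identify the conditional mean of $d_H(f(x), f(y))$ over the dither $\tau$, establish its concentration around this mean uniformly in $(x,y)$, and finally relate the conditional mean itself to the Euclidean distance via the $\ell_1/\ell_2$ behaviour of a Gaussian matrix.

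For the first step, observe that for a single row $a_i$ of $A$, if $\max(|\langle a_i, x\rangle|, |\langle a_i, y\rangle|) \leq \lambda$, a direct computation gives
\[
\PROB_\tau\bigl(\operatorname{sign}(\langle a_i, x\rangle + \tau_i) \neq \operatorname{sign}(\langle a_i, y\rangle + \tau_i) \bigm| A\bigr) = \frac{|\langle a_i, x - y\rangle|}{2\lambda},
\]
so summing over $i$ identifies the conditional mean as $\|A(x - y)\|_1/(2\lambda)$ plus a truncation correction supported on rows where $\max(|\langle a_i, x\rangle|, |\langle a_i, y\rangle|) > \lambda$. The hypothesis $\lambda \gtrsim R\sqrt{\log(R/\delta)}$, combined with Gaussian tail bounds and a Bernstein-type estimate for the number of offending rows, keeps this correction below a constant multiple of $\delta m/\lambda$ uniformly over $T$ with the required probability.

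Next, conditionally on $A$ the summands defining $d_H(f(x), f(y))$ are independent Bernoulli variables, so I would establish uniform concentration around the conditional mean by a two-scale chaining argument. At the coarse scale, Bernstein's inequality applied to a single pair drawn from a $\theta$-net of $T$, followed by a union bound over the net, produces the $\lambda^2 \log \mathcal{N}(T, \theta)/\delta^2$ term in \eqref{eq:est-m-intro}. The finer scales are absorbed by a generic chaining estimate on $(T - T) \cap \theta B^n_2$, which yields the $\lambda \, \ell_*^2((T - T) \cap \theta B^n_2)/\delta^3$ term. The choice $\theta \lesssim \delta/\sqrt{\log(e\lambda/\delta)}$ is precisely what matches the sub-Gaussian increments appearing in the chaining to the scale of the local Gaussian complexity, and the overall deviation probability $\exp(-c_3 \delta^2 m/\lambda^2)$ is the Bernstein tail at the scale $\delta m/\lambda$.

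For the third step, a parallel chaining argument for $\|A(x - y)\|_1/m$---whose pointwise expectation equals $\sqrt{2/\pi}\,\|x - y\|_2$---shows that this quantity approximates $\sqrt{2/\pi}\,\|x - y\|_2$ uniformly over $T - T$ up to $O(\delta/\lambda)$, under the same condition on $m$. Rescaling the combined conclusions of the three steps by $\sqrt{2\pi}\lambda/m$ delivers \eqref{eqn:GaussianDitheredText}. The main obstacle is aligning the two chaining arguments so that they share the common scale $\theta$ and compatible deviation probabilities, so that the individual error contributions combine additively to $\delta$ without picking up additional logarithmic losses; a secondary technical point is making the truncation correction in step one uniform over $T$ rather than only pointwise, which requires a small extra chaining argument exploiting the sub-Gaussian increments of $t \mapsto \langle a_i, t\rangle$.
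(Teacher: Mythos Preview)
Your high-level decomposition---condition on $A$, recognize the conditional mean of $d_H$ as $\|A(x-y)\|_1/(2\lambda)$ plus a truncation term, concentrate on a $\theta$-net via Bernstein, and finally invoke uniform $\ell_1$-concentration of the Gaussian matrix---is in line with the architecture behind the paper's proof (carried out through the generic embedding Theorem~\ref{thm:mainGeneric}). However, the paper handles the two places you flag as delicate---the truncation correction and the passage from the net to all of $T$---by a \emph{different} mechanism than the one you propose. It does not chain the Bernoulli process in the $\tau$-randomness over $(T-T)\cap\theta B_2^n$ at all. Instead, once $A$ lies in a high-probability event, both effects are controlled \emph{deterministically} via the ``well-spread'' estimates
\[
\sup_{x\in T_\theta}\|Ax\|_{[k]}\le \lambda\sqrt{k},
\qquad
\sup_{z\in (T-T)\cap\theta B_2^n}\|Az\|_{[k]}\le \delta\sqrt{k},
\qquad k=\lfloor \delta m/\lambda\rfloor,
\]
which are exactly conditions \eqref{eqn:knormbias}--\eqref{eqn:knormoscillations} of Theorem~\ref{thm:mainGeneric} and are supplied for the Gaussian matrix by \eqref{eqn:knormGaussian}. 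These bounds cap the number of rows that can be near the threshold or flip sign under any admissible perturbation by $k$, so their contribution to $\tfrac{2\lambda}{m}d_H$ is at most $\sim 2\lambda k/m\sim\delta$ for \emph{every} realization of $\tau$. The term $\lambda\,\ell_*^2((T-T)\cap\theta B_2^n)/\delta^3$ in \eqref{eq:est-m-intro} is precisely the condition on $m$ that makes \eqref{eqn:knormGaussian} deliver the second inequality above; it is not the output of a chaining argument on the Hamming process.

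This is where your proposal has a gap. You assert that ``the finer scales are absorbed by a generic chaining estimate on $(T-T)\cap\theta B_2^n$'' producing the $\lambda\ell_*^2/\delta^3$ term, but conditionally on $A$ the process you would chain is a Bernoulli process whose increments are governed by $\|A(\cdot)\|_1/\lambda$, not a sub-Gaussian process in the Euclidean metric; the Gaussian width $\ell_*$ has no direct role there. To convert a conditional chaining bound into one involving $\ell_*((T-T)\cap\theta B_2^n)$ you would still need uniform control over how $A$ distributes mass across coordinates---i.e., exactly a $\|\cdot\|_{[k]}$-type estimate. The same applies to your ``small extra chaining'' for the truncation: making the number of offending rows uniform over $T$ is essentially \eqref{eqn:knormGaussian} applied to $T_\theta$. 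So the missing ingredient in your plan is the well-spread property \eqref{eqn:knormGaussian}; once you invoke it, the fine-scale chaining you describe becomes unnecessary, and the argument collapses to Theorem~\ref{thm:mainGeneric}.
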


Although the bound on the dimension $m$ in \eqref{eq:est-m-intro} seems unnatural, it is, in fact, \emph{optimal}\blue{. We} refer the reader to \cite{DMS21a} for the proof of this surprising fact.
\par
In what follows we show that a version of Theorem \ref{thm:GaussianDitheredIntro} is true for a certain computationally friendly matrix - the double circulant matrix. To define that matrix, let $I\subset [n]$ with $|I|=m$ and set $R_I x = \sum_{i \in I} x_i e_i$. For vectors $x,y \in \R^n$, let $\Gamma_x y = x \circledast y$; thus \blue{$\Gamma_x\in \R^{n\times n}$} is the discrete convolution operator with $x$. \blue{Let $D_x = {\rm diag}(x_1,...,x_n)\in \R^{n\times n}$} be the diagonal \blue{matrix} defined by $x$, let $G\in \R^n$ be the standard Gaussian vector, and set $\eps'',\eps',\eps \in \R^n$ to be Rademacher vectors, i.e., vectors consisting of independent random variables taking values $1$ and $-1$ with equal probability. We assume throughout that $G,\eps'',\eps'$, and $\eps$ are independent.
\begin{tcolorbox}
\begin{Definition} \label{def:double-circ}
The \emph{double circulant matrix} $A\in \R^{m\times n}$ is defined by
\begin{equation} \label{eqn:doublecirculantJL}
A=\frac{1}{\sqrt{n}} R_I \Gamma_G D_{\eps''}\Gamma_{\eps'}D_{\eps}.
\end{equation}
\end{Definition}
\end{tcolorbox}
Clearly, $A$ requires $O(n)$ storage capacity, and it is well known that matrix-vector multiplication for a circulant matrix can be carried out in time $O(n\log n)$ by exploiting the fast Fourier transform.

Our main result is that the \blue{binary} embedding endowed by the double circulant matrix performs as well as the Gaussian embedding (up to logarithmic factors \red{in $n$} and with a worse success probability).

\begin{tcolorbox}
\begin{Theorem} \label{thm:DoubleCirculantEmbeddingText}
For any $\gamma\geq 1$, there exist $c_0,\ldots,c_3$ that are \blue{polynomial in $\log(n)$ and $\gamma$} such that the following holds. Fix $0<\delta<R/2$, let $T\subset RB^n_2$, and \red{set}
$$
\theta =  \frac{\delta}{\blue{c_0}\sqrt{\log(e\lambda/\delta)}}, \ \  \ \ \lambda \geq c_1 R\sqrt{\log(R/\delta)}.
$$
\blue{Suppose that $n\geq c_2\nrows$ and}
\begin{equation}
\label{eqn:DoubleCirculantEmbeddingTextNumRows}
m\geq c_3 \left( \lambda^2 \frac{\log {\cal N}(T,\theta)}{\delta^2}  +  \lambda \frac{\ell_*^2((T-T)\cap \theta B^n_2)}{\delta^3} \right).
\end{equation}
Let $A \in \R^{m \times n}$ be the double circulant matrix. If $\tau$ is uniformly distributed in $[-\lambda,\lambda]^m$ \red{and independent of $A$}, then with probability at least \blue{$1-n^{-\gamma}$}, the map $f(t)= \operatorname{sign}(At+\tau)$ satisfies 
\begin{equation*} \label{eqn:GaussianDitheredText}
\sup_{x,y\in T}\left|\frac{\sqrt{2\pi}\lambda}{m}d_H(f(x),f(y))-\|x-y\|_2\right|\leq  \delta.
\end{equation*}
\end{Theorem}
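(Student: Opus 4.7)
The plan is to reduce the binary-embedding statement to the near-isometric $\ell_2^n\to\ell_1^m$ embedding of the double circulant matrix $A$, which should be established earlier in the paper as one of its two ``Gaussian-like'' properties, and then to graft on the dithering machinery from Theorem~\ref{thm:GaussianDitheredIntro}. Throughout, I decompose $A = \frac{1}{\sqrt n}\, R_I \Gamma_G B$ with $B = D_{\eps''}\Gamma_{\eps'}D_{\eps}$, and condition on the Rademacher preconditioner $B$, exploiting that conditionally on $B$ the vector $Az$ is a centered Gaussian whose coordinates each have variance $\|Bz\|_2^2/n \approx \|z\|_2^2$ as soon as $Bz$ is shown to be well spread on all relevant scales.

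The first step is the expectation-over-$\tau$ identity. On the event that all $\langle A_i, t\rangle$ with $t \in T$ lie in $[-\lambda,\lambda]$ (the ``no-clipping'' event), a direct computation gives
\[
\mathbb{E}_\tau\, d_H(f(x),f(y)) \;=\; \frac{\|A(x-y)\|_1}{2\lambda}\qquad\text{for every }x,y\in T.
\]
The conditional Gaussianity of $\langle A_i, t\rangle$ together with the spreading of $B$ reduces the no-clipping event to a union bound over $i \in [m]$ and a $\theta$-net of $T$; the assumption $\lambda \geq c_1 R\sqrt{\log(R/\delta)}$ is precisely what is needed to close this bound.

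The second step is to control the sampling fluctuation $d_H(f(x),f(y)) - \mathbb{E}_\tau d_H(f(x),f(y))$ uniformly over $T$. Conditionally on $A$, the difference is a sum of $m$ independent bounded indicators, and the standard two-step argument of Theorem~\ref{thm:GaussianDitheredIntro} applies: a Hoeffding tail bound coupled with a union bound over a $\theta$-net of $T$ produces the first term $\lambda^2\log\mathcal{N}(T,\theta)/\delta^2$ in \eqref{eqn:DoubleCirculantEmbeddingTextNumRows}, while the local oscillation over pairs with $\|x-y\|_2 \leq \theta$ is bounded by generic chaining against a sub-Gaussian process indexed by $T-T$, whose increments are dominated by $\|A(x-y)\|_2/\sqrt m$. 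Using the conditional Gaussianity of $A$ given $B$, these increments are sub-Gaussian with respect to Euclidean distance, so Talagrand's $\gamma_2$-functional of $(T-T)\cap \theta B_2^n$ appears naturally and converts into the second term $\lambda\,\ell_*^2((T-T)\cap \theta B_2^n)/\delta^3$ in \eqref{eqn:DoubleCirculantEmbeddingTextNumRows}.

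The third step replaces $\|A(x-y)\|_1/m$ by $\sqrt{2/\pi}\,\|x-y\|_2$ uniformly on $T$ by invoking the $\ell_2 \to \ell_1$ embedding of $A$ at accuracy $\delta/2$; multiplying the three steps through by $\sqrt{2\pi}\lambda/m$ yields the stated inequality. The main obstacle lies not in this assembly but in that previously-established $\ell_2\to\ell_1$ embedding: because the rows of $A$ are heavily dependent, Bernstein-type inequalities are too weak, and one instead argues by conditioning on $B$, showing that $Bz$ is well spread on all of $T-T$ simultaneously, and then using the Lipschitz-plus-Gaussian-concentration of $\|Az\|_1$ as a function of $G$. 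Pushing the spreading uniformly over $T-T$ and paying for the attendant union bounds is what produces the $\mathrm{polylog}(n)$ losses absorbed into the constants $c_0,\dots,c_3$, as well as the weaker $1 - n^{-\gamma}$ success probability in place of the exponential Gaussian-case tail.
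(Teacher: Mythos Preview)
Your proposal has a genuine gap: you omit the $\|\cdot\|_{[k]}$ machinery entirely, and the substitutes you propose do not close the argument.

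The ``no-clipping'' event you invoke---that $|\langle A_i,t\rangle|\le\lambda$ for \emph{all} $i\in[m]$ and \emph{all} $t\in T$---is in general false under the stated hypotheses. Even for the i.i.d.\ Gaussian matrix one has $\sup_{t\in T}\max_i|\langle A_i,t\rangle|\gtrsim\ell_*(T)$, and nothing in the assumptions forces $\ell_*(T)\le\lambda$; indeed $\lambda\sim R\sqrt{\log(R/\delta)}$ while $\ell_*(T)$ can be as large as $\sqrt{n}R$. Spreading of the preconditioner $B$ controls the \emph{correlations} among the $(At)_i$ but does nothing to reduce $\sup_t\|At\|_\infty$. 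The paper's route through Theorem~\ref{thm:mainGeneric} avoids this by never demanding full no-clipping: it only asks that $\|At\|_{[k]}\le\lambda\sqrt{k}$ for $k=\lfloor\delta m/\lambda\rfloor$, i.e.\ that at most $k$ coordinates escape the interval, and this weaker requirement is what the strong-regularity theory (Theorems~\ref{thm:new}, \ref{thm:ugly-est}, \ref{thm:regularity-of-B}) is built to deliver.

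Your second step has a parallel problem. The extension from a $\theta$-net to all of $T$ cannot be done by sub-Gaussian chaining with increments $\|A(x-y)\|_2/\sqrt{m}$: the process $(x,y)\mapsto d_H(f(x),f(y))-\E_\tau d_H(f(x),f(y))$, conditionally on $A$, is a sum of centered indicator differences whose increment in $x$ is controlled by $d_H(f(x),f(x'))$, a Bernoulli-type quantity with mean $\|A(x-x')\|_1/(2\lambda)$ rather than a sub-Gaussian tail governed by $\|A(x-x')\|_2/\sqrt{m}$. The paper handles this passage not by chaining but by the second $\|\cdot\|_{[k]}$ condition, $\sup_{z\in(T-T)\cap\theta B_2^n}\|Az\|_{[k]}\le\delta\sqrt{k}$, which directly bounds the number of sign flips between $f(x)$ and $f(\pi x)$; this is precisely where the term $\lambda\,\ell_*^2((T-T)\cap\theta B_2^n)/\delta^3$ in \eqref{eqn:DoubleCirculantEmbeddingTextNumRows} originates (via Theorem~\ref{thm:ugly-est}). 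Your Step~3, the uniform $\ell_1$-concentration, is correctly identified and corresponds to Corollary~\ref{cor:ell-1-conc-double circulant}, but on its own it supplies only condition~(3) of Theorem~\ref{thm:mainGeneric}; conditions~(1) and~(2) require the strong-regularity route you have bypassed.
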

\end{tcolorbox}

\subsection{The \red{G}aussian behavior of the double circulant matrix}
\label{sec:introGaussBehav}

\blue{The} proof \red{of} Theorem~\ref{thm:GaussianDitheredIntro} is based on \red{two} well-known properties of the standard Gaussian matrix. \vskip0.3cm

First, if $A \in \R^{m \times n}$ is the Gaussian matrix, then for any $T \subset \R^n$, with high probability, $A$ is \blue{\emph{an embedding of $T$ \blue{into} $\ell_1^m$}}, in the following sense: for any $u>0$
\begin{equation} \label{eqn:ell2ell1IsomGaussian}
\bP\left(\sup_{z\in T}\Big|\frac{1}{m}\sqrt{\frac{\pi}{2}}\|Az\|_1 - \|z\|_2\Big|\leq \frac{4\ell_*(T)}{\sqrt{m}}+u\right)\leq 2e^{-mu^2/(2\mathcal{R}(T)^2)},
\end{equation}
where $\mathcal{R}(T)=\sup_{x\in T}\|x\|_2$. The proof of this fact can be found in \cite{PlV14} (see Lemma~2.1 there).
\vskip0.3cm
Second, a Gaussian matrix \emph{maps any $T$ into a set of `well-spread' vectors}: for any $\red{1\leq k\leq m}$ and $u \geq 1$, with probability at least $1-2\exp(-c u^2 k \log(em/k))$,
\begin{equation} \label{eqn:knormGaussian}
\sup_{z \in T} \|Az\|_{[k]} \leq C \left(\ell_*(T) + u \mathcal{R}(T) \sqrt{k \log(em/k)} \right),
\end{equation}
where
\begin{equation*}
\|x\|_{[k]}=\sup_{|I|=k} \left(\sum_{i\in I} x_i^2\right)^{1/2}.
\end{equation*}
The proof can be found, for example, \blue{in \cite{DMS21a} (see Theorem~2.5 there)}.\par
\blue{Together with a generic binary embedding result, stated in Theorem~\ref{thm:mainGeneric}, these two facts imply Theorem~\ref{thm:GaussianDitheredIntro}.} 
\vskip0.3cm

With that in mind, \blue{the} heart of the proof of Theorem~\ref{thm:DoubleCirculantEmbeddingText} is \blue{to show} that the double circulant matrix `acts as a Gaussian matrix'; specifically, that it satisfies suitable versions of \eqref{eqn:ell2ell1IsomGaussian} and \eqref{eqn:knormGaussian}. This behavior is surprising in view of the limited randomness \blue{in} the double circulant matrix and the \red{strong} \blue{dependence of its entries}. \blue{As a result, the approaches used to prove \eqref{eqn:ell2ell1IsomGaussian} and \eqref{eqn:knormGaussian} fail in case of the double circulant matrix. We will develop an entirely new approach to establish those properties.}\par
\blue{We first develop a general recipe for constructing a matrix that maps an arbitrary set to a collection of vectors that are well spread. The key feature that we introduce for this purpose is the notion of \emph{strong regularity}.} Intuitively, a matrix $B\in \R^{m\times n}$ is strongly regular if it acts as a Euclidean almost-isometry on sparse vectors, and also maps sparse vectors into well-spread ones (see Definition~\ref{def:regular} for a formulation of the strong regularity property). We prove that the matrix $BD_{\eps}$, obtained by randomizing the column signs of a strongly regular matrix $B$, satisfies an estimate similar to \eqref{eqn:knormGaussian} for an arbitrary set $T$ and with high probability with respect to $\eps$. The accurate formulation \blue{of this statement} can be found in Section~\ref{sec:knormEst}.
\par
Next, with the notion of strong regularity in mind, the second component of the proof of Theorem \ref{thm:DoubleCirculantEmbeddingText} is to show that
$$
B=\frac{1}{\sqrt{mn}} R_I \Gamma_G D_{\eps''}\Gamma_{\eps'}
$$
is strongly regular (obviously, $A=\sqrt{m}BD_\eps$). That \blue{fact} is established in Section~\ref{sec:reg-double-circulant} by using known (but nontrivial) tools, developed in \cite{KMR14} and \cite{DiM18b}.

Combining those facts, it follows that a typical realization of the double circulant matrix $A$ maps an arbitrary set to a collection of well-spread vectors, thus leading to an estimate as in \eqref{eqn:knormGaussian}.
\par

Once a version of \eqref{eqn:knormGaussian} is established, we turn our attention to \eqref{eqn:ell2ell1IsomGaussian}: showing that just like the Gaussian matrix, a double circulant matrix satisfies a uniform $\ell_1$\blue{-}concentration phenomenon. To that end, we first prove that for a fixed vector $y$, the random variable $\|R_I\Gamma_G y\|_1$ concentrates sharply around its mean \blue{$m\sqrt{\tfrac{2}{\pi}}\|y\|_2$} if the discrete Fourier transform of $y$ is well-spread -- see Section~\ref{sec:conv}. The exact notion of `well-spread' needed here is clarified in what follows. \blue{Next, recalling} that
$$
A=\frac{1}{\sqrt{n}} R_I \Gamma_G D_{\eps''}\Gamma_{\eps'}D_{\eps},
$$
\blue{we prove that} $\|At\|_1$ \blue{concentrates by showing that the discrete Fourier transform of} $D_{\eps''}\Gamma_{\eps'}D_{\eps}t$ is well-spread. \blue{The concentration for any fixed vector $t \in T$ is sufficiently strong to derive a uniform concentration estimate in a straightforward manner} -- see Section~\ref{sec:double-circulant-ell-1}.

\subsection{Fast $\ell_2$-$\ell_1$ dimension reduction of finite sets}

\green{As a side product, our analysis yields a result of independent interest: a new fast embedding of any finite set of points in $\ell_2^n$ into $\ell_1^m$. The embedding has runtime $O(n\log n)$ and achieves the same dimension reduction as a Gaussian matrix. This improves a well-known construction by Ailon and Chazelle \cite{AiC09}; most significantly, we remove a strong restriction on the cardinality of the set that one wishes to embed. The condition obtained here is optimal up to a poly-logarithmic factor if the goal is maximal dimension reduction -- see the next section for a detailed discussion. Note that the embedding dimension $|I|$ lies between $m/2$ and $3m/2$, say, with probability at least $1-e^{-cm}$. 
\begin{tcolorbox}
\begin{Theorem}
\label{thm:JLell1ell2Intro}
For any $\gamma\geq 1$, there exist $c_0,c_1$ that are at most polynomial in $\gamma$ such that the following holds. Consider the scaled double circulant matrix $C=\frac{1}{\nrows}\sqrt{\frac{\pi}{2}}A$, where $I$ is chosen using random selectors: $I=\{i\in [n] \ : \ \theta_i=1\}$, where $\theta_1,\ldots,\theta_n$ are independent and $1-\bP(\theta_i=0)=\bP(\theta_i=1)=m/n$. Let $T\subset \R^n$ be finite and let $\epsilon\leq \log^{-5/2}(n)$. Then, $C$ satisfies   
\begin{equation}
\label{eqn:JLell1ell2}
(1-\epsilon)\|x-y\|_2 \leq \|Cx-Cy\|_1 \leq (1+\epsilon)\|x-y\|_2, \qquad \text{for all } x,y\in T
\end{equation}
with probability at least $1-n^{-\gamma}$ if $m\geq c_0\epsilon^{-2}\log|T|$ and $|T|\leq \exp(c_1\epsilon^2 n/\log^6(n))$. 
\end{Theorem}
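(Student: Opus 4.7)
The plan is to exploit the fixed-vector $\ell_1$-concentration for the double circulant matrix developed in Section~\ref{sec:double-circulant-ell-1} and combine it with a straightforward union bound over the pairwise differences of $T$. By linearity of $C=\frac{1}{m}\sqrt{\pi/2}\,A$ and homogeneity in $\|u\|_2$, it is enough to prove that with sufficiently high probability every unit vector $u$ in the normalized set of differences $\{(x-y)/\|x-y\|_2: x\ne y \in T\}$ satisfies $|\|Cu\|_1-1|\leq \epsilon$, or equivalently, $|\|Au\|_1-m\sqrt{2/\pi}|\leq \epsilon m \sqrt{2/\pi}$. There are fewer than $|T|^2$ such directions, so the task reduces to showing that for any fixed unit vector $u$,
\[
\bP\Bigl(\bigl|\|Au\|_1 - m\sqrt{2/\pi}\bigr|>\epsilon\, m\sqrt{2/\pi}\Bigr)\le |T|^{-2}n^{-\gamma}.
\]

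For this fixed-vector estimate I would decompose the randomness exactly as sketched in Section~\ref{sec:introGaussBehav}. Set $y:=D_{\eps''}\Gamma_{\eps'}D_{\eps}u$, so that $Au=\frac{1}{\sqrt n}R_I\Gamma_G y$. The first stage conditions on the Rademacher vectors $(\eps,\eps',\eps'')$: using the strong-regularity machinery of Section~\ref{sec:reg-double-circulant} (which in turn rests on \cite{KMR14, DiM18b}), show that with probability at least $1-\exp(-c\epsilon^2 n/\log^6 n)$, the discrete Fourier transform $\hat y$ is quantitatively flat at the scale needed to drive an $\epsilon$-level concentration downstream, and $\|y\|_2/\sqrt n$ is within a $(1\pm\epsilon)$ factor of $\|u\|_2=1$. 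The second stage conditions on such a good $y$ and invokes Section~\ref{sec:conv}: over the Gaussian vector $G$ and the independent selectors $\theta_i$, $\|R_I\Gamma_G y\|_1$ concentrates around its mean $m\sqrt{2/\pi}\|y\|_2$ with a sub-exponential rate of order $\exp(-cm\epsilon^2)$. Summing the two failure probabilities yields a single-vector bound of the form
\[
\exp(-cm\epsilon^2)+\exp(-c\epsilon^2 n/\log^6 n).
\]

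The hypotheses of the theorem are designed to drive the union bound home. The lower bound $m\geq c_0\epsilon^{-2}\log|T|$ (with $c_0$ polynomial in $\gamma$) forces $\exp(-cm\epsilon^2)\leq |T|^{-2}n^{-\gamma}$, while the cardinality ceiling $|T|\leq \exp(c_1\epsilon^2 n/\log^6 n)$ forces $\exp(-c\epsilon^2 n/\log^6 n)\leq |T|^{-2}n^{-\gamma}$. The side condition $\epsilon\leq \log^{-5/2}(n)$ is a compatibility requirement ensuring that the polylogarithmic overhead inherent in the Fourier-flatness scale does not swamp the principal $\epsilon$-scale, so that the two probability estimates can coexist.

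The main technical obstacle is not the union bound but the Fourier-flatness stage of the fixed-vector concentration. The strong dependencies between the entries of the double circulant matrix preclude a direct application of classical concentration tools; one genuinely needs the inner circulant layer $\Gamma_{\eps'}$ in addition to the outer Rademacher rescalings $D_{\eps}$ and $D_{\eps''}$ to push the flattening of $\hat y$ down from an $n^{-1/2}\sqrt{\log n}$ scale (which would only yield Ailon--Chazelle-type cardinality restrictions) to the polylogarithmic $\ell_\infty$-scale needed for the near-linear in $n$ bound on $\log|T|$. Once that flattening is in hand, the tail estimate for $\|R_I\Gamma_G y\|_1$ follows from a standard sub-exponential concentration for the Lipschitz function $G\mapsto \|R_I\Gamma_G y\|_1$ combined with a Bernoulli concentration for the selectors $\theta_i$.
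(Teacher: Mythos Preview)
Your high-level plan --- reduce to the set $T'$ of normalized differences and then control $|\,\|Au\|_1-m\sqrt{2/\pi}\,|$ pointwise with a union bound --- is the right starting point, but the rates you claim in the two stages are not the ones that the machinery of Sections~\ref{sec:conv} and \ref{sec:double-circulant-ell-1} actually delivers, and this is precisely where the argument breaks.

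In Stage~2 you assert that, conditional on a good $y$, $\|R_I\Gamma_G y\|_1$ concentrates around $m\sqrt{2/\pi}\|y\|_2$ at rate $\exp(-cm\epsilon^2)$. Corollary~\ref{cor:ell-1-conc-good-position-2} gives instead a rate $\exp(-c\epsilon^2 m r/\Lambda^2)$; even with the best Fourier flatness one can hope for (Corollary~\ref{cor:ell-1-conc-strong-regularity}), the factor $r/\Lambda^2$ carries a $\Upsilon^{-2}\sim\log^{-5}n$ penalty (and a further $\log$ from $\log(en/r)$). So over $G$ your stage yields at best $\exp(-c\epsilon^2 m/\log^6 n)$, and the union bound would force $m\gtrsim\epsilon^{-2}\log|T|\cdot\log^6 n$ --- exactly the bound of Corollary~\ref{cor:ell-1-conc-double circulant}, not the sharper one in Theorem~\ref{thm:JLell1ell2Intro}. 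The one-line appeal to ``a Bernoulli concentration for the selectors $\theta_i$'' does not repair this: to get a clean $\exp(-cm\epsilon^2)$ from the selectors you must first establish a \emph{uniform} $\ell_2$ bound $\sup_{u\in T'}\tfrac{1}{m}\|A_{I_\theta}u\|_2^2\le c$, which is a separate nontrivial step and is nowhere in your outline. Similarly, your Stage~1 probability $1-\exp(-c\epsilon^2 n/\log^6 n)$ for Fourier flatness of a \emph{single} $y$ is not what Theorem~\ref{thm:ell-1-conc-strong-regularity} and Corollary~\ref{cor:ell-1-conc-strong-regularity} give: the strong-regularity event has probability $1-n^{-\gamma}$, and the $\eps$-randomization bound of Theorem~\ref{thm:new} for a singleton has exponent $\sim r\log(en/r)$, not $\epsilon^2 n/\log^6 n$.

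The paper's proof (Section~6.1) resolves this by using the random selectors in a structurally different way. It first writes
\[
\tfrac{1}{m}\|A_{I_\theta}u\|_1-1 \;=\; \Bigl(\tfrac{1}{m}\|A_{I_\theta}u\|_1-\tfrac{1}{n}\|A_{[n]}u\|_1\Bigr)\;+\;\Bigl(\tfrac{1}{n}\|A_{[n]}u\|_1-1\Bigr).
\]
The second term is the \emph{full} circulant with $n$ rows, so Corollary~\ref{cor:ell-1-conc-double circulant} controls it uniformly under $n\gtrsim\epsilon^{-2}\log|T|\log^6 n$; this is exactly the cardinality ceiling on $|T|$, and the polylog overhead lands on $n$, not on $m$. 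For the first term the paper symmetrizes in $\theta$ to obtain a Rademacher sum $\tfrac{1}{m}\sum_i\zeta_i\theta_i|\langle a_i,u\rangle|$, establishes the uniform $\ell_2$ bound $\sup_{u\in T'}\tfrac{1}{m}\sum_i\theta_i|\langle a_i,u\rangle|^2\le c$ via Theorems~\ref{thm:regularity-of-B} and~\ref{thm:Men21} (this is where $\epsilon\le\log^{-5/2}n$ enters: it converts $m\gtrsim\epsilon^{-2}\log|T|$ into $m\gtrsim\log|T|\log^5 n$), and only then applies Hoeffding in $\zeta$ to get the clean $\exp(-cm\epsilon^2)$ rate needed for the union bound. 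The random selectors are thus not merely a source of extra concentration appended to the Gaussian step; they are what allows the polylogarithmic loss to be absorbed into the condition on $n$ rather than on $m$.
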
 
\end{tcolorbox}
\begin{Remark}
\label{rem:optBitCompRS}
If $T$ is finite, then one can similarly improve Theorem~\ref{thm:DoubleCirculantEmbeddingText} by selecting $I$ using i.i.d.\ random selectors. In this case, one can show that this result remains true under the optimal condition $m\gtrsim \delta^{-2}\log|T|$ if $n\geq c\delta^{-2}\log|T|$ and $c$ is polynomial in $\log(n)$ and $\gamma$. We omit the details of this argument. 
\end{Remark}
}

\subsection{Related work}
\label{eqn:relatedWork}

\textbf{Fast $\ell_2$-$\ell_2$ dimension reduction with circulant matrices.} Numerous works have proposed and analyzed computationally efficient random matrices for $\ell_2$-$\ell_2$ dimension reduction (see, e.g., \cite{Ach03,AiC09,AiL09,BDN15,DKS10,FHM22,FrL20,HiV11,JPS20,KaN14,KrW11,Vyb11} and the references therein). We will only highlight the successful use of random circulant matrices for this task -- an approach that was first considered by Hinrichs and Vyb\'{i}ral \cite{HiV11}. They proved that the matrix $C=\frac{1}{\sqrt{m}}R_{I} \Gamma_{\eps'}D_{\eps}$ satisfies \eqref{eqn:J-L} with large probability if $m\sim \epsilon^{-2}\log^3(|T|)$. This was later improved to $m\sim \epsilon^{-2}\log^2(|T|)$ \cite{Vyb11} and shown not to be improvable further if $m|T|\leq n$ \cite{FrL20} -- however, it is possible to improve the scaling in terms of $|T|$ to $\log|T|$ at the expense of additional logarithmic factors in the dimension (by combining \cite{KMR14} and \cite{KrW11}). In fact, the main result of \cite{ORS15} (see also Theorem~\ref{thm:new} from \cite{Men21} below) shows that $C$ satisfies \eqref{eqn:J-L} with high probability under the refined condition $m \gtrsim \epsilon^{-2}\ell_*^2(T^\prime)\log^4(n)$. These results show that $C$ can serve as a computationally efficient replacement of the Gaussian matrix for $\ell_2$-$\ell_2$ dimension reduction. 
\vspace{0.2cm}
\\       
\textbf{Fast $\ell_2$-$\ell_1$ dimension reduction.} In their groundbreaking paper \cite{AiC09}, Ailon and Chazelle initiated the study of fast Johnson-Lindenstrauss transforms that use structured random matrices that support fast matrix-vector multiplication. Although most subsequent works have focused on alternative fast transforms and improved guarantees for $\ell_2$-$\ell_2$ dimension reduction (see, e.g., \cite{FHM22,HiV11,JPS20,KrW11,Vyb11}), the original work \cite{AiC09} also studied computationally efficient $\ell_2$-$\ell_1$ dimension reduction of finite point sets, motivated by approximate nearest neighbor search.\par 
Ailon and Chazelle's original transform takes the form $C=\tfrac{1}{\nrows}\sqrt{\frac{\pi}{2}}PHD_{\eps}$, where $D_{\eps}$ is as before, $H\in \R^{n\times n}$ is a normalized Hadamard matrix, and $P\in \R^{m\times n}$ is a sparsified Gaussian matrix: it has i.i.d.\ entries which are equal to $0$ with probability $1-q$ and equal to a Gaussian with mean zero and variance $1/q$ with probability $q$, where $q$ needs to be picked appropriately. It satisfies \eqref{eqn:JLell1ell2} if $q\sim\min\{\log(|T|)/(n\epsilon),1\}$ and $m\sim\epsilon^{-2}\log(|T|)$. The runtime is 
$$O(n\log(n)+\min\{n\epsilon^{-2}\log(|T|),\epsilon^{-3}\log^2(|T|)\})$$ 
so that it achieves $O(n\log n)$ runtime under an appropriate restriction on the number of points in $T$: $|T|$ must be $O_{\epsilon}(\exp(n^{1/2}))$. Essentially the same bottleneck appears in the alternative construction in \cite{AiL09}. This bottleneck was recently improved to $O_{\epsilon}(\exp(n^{1-\zeta}))$ by the method of \cite{BaK21}, but this improvement applies only for the related problem of \emph{simultaneously} embedding a subset $T'$ of $T$ in time $O(|T'| n\log n)$, where $T'$ has at least polynomial size $n^{g(\zeta)}$ (for a certain function $g$). We improve these results in two steps.\par
\green{In the case of $\ell_2$-$\ell_2$ dimension reduction, it is well known that one can construct structured random matrices with improved runtime $O(n\log n)$ (without a restriction on the number of points) if one is willing to raise the embedding dimension by a polylogarithmic factor in $|T|$ and/or $n$ (see, e.g., \cite{HiV11,JPS20,KrW11,Vyb11}). As a first step, we make an analogous contribution for fast $\ell_2$-$\ell_1$ dimension reduction: we show that the scaled double circulant matrix $C=\frac{1}{\nrows}\sqrt{\frac{\pi}{2}}A$ satisfies \eqref{eqn:JLell1ell2} with high probability if $\nrows \gtrsim \epsilon^{-2}\log(|T|)\log^{6}(n)$ (see Corollary~\ref{cor:ell-1-conc-double circulant} for $T^{\prime}$ as in \eqref{eq:T-prime})}.\par 
\green{Theorem~\ref{thm:JLell1ell2Intro} improves this further by using a double circulant matrix with $I$ picked using \emph{random selectors}: it achieves the same dimension reduction as the Gaussian matrix ($m\sim \epsilon^{-2}\log|T|$) under the significantly relaxed restriction that $|T|=O_{\epsilon}(\exp(n/\log^6(n)))$ -- which is nearly optimal if one is interested in dimension reduction (i.e., ensuring that $m\leq n$). Note that beyond the dimension reduction setting, Indyk \cite{Ind07} showed that for any distortion $\epsilon>1/\log n$ there is an embedding of \emph{all} of $\ell_2^n$ into $\ell_1^m$ with $m=O(n^{1+o(1)})$ and runtime $O(n^{1+o(1)})$.}\par  
Finally, let us mention for completeness a less closely related result on fast $\ell_2$-$\ell_1$ dimension reduction derived in the context of one-bit compressed sensing \cite{DJR20}. It was shown there that the matrix $C=\tfrac{1}{m}\sqrt{\frac{\pi}{2}} R_{I} \Gamma_G$, where $I$ is picked using i.i.d.\ random selectors, satisfies \eqref{eqn:JLell1ell2} with high probability on the set of all $s$-sparse vectors if $m\gtrsim \epsilon^{-2} s\log(n/s\epsilon)$, provided that the sparsity level $s$ is small enough (e.g., if $s\leq\epsilon\sqrt{n}$). 
\vspace{0.2cm}
\\      
\textbf{Binary encoding of Euclidean distances.} Theorem~\ref{thm:DoubleCirculantEmbeddingText} fits into a more general line of work \cite{DiS18,HuS20,InW17,InW21,Jac15,Jac17,JaCa17,JLB13,Oym16,YCP15,YBK18,ZhS20} that strives to create an \emph{efficient binary encoding} of all Euclidean distances in a given set $T$ (see also \cite{AlK17}, which focuses on inner products and \emph{squared} distances). This task consists in constructing a \blue{computationally efficient} embedding map $f:T\to\{-1,1\}^m$ and reconstruction map $d:\{-1,1\}^m\times \{-1,1\}^m\to \R$ such that for any pair $x,y\in T$, $d(f(x),f(y))$ is an accurate proxy of $\|x-y\|_2$. The binary encoding in Theorem~\ref{thm:DoubleCirculantEmbeddingText} stands out in comparison to the aforementioned works in achieving all of the following properties \emph{simultaneously}:
\begin{enumerate}[label=(\roman*)]
\item The embedding map $f$ is a metric embedding into the Hamming cube, i.e., the reconstruction map $d$ is (a constant multiple of) the natural Hamming distance, as in \cite{Oym16,YBK18}. Consequently, the number of bit operations needed to compute $d(f(x),f(y))$, is minimal - one only needs to directly compare two bit strings;
\item The embedding time $O(n\log n)$ of our construction, i.e., the time needed to compute $f(x)$ for a given $x$, is on par with the best existing results \cite{AlK17,DiS18,HuS20,Oym16,YCP15,YBK18};
\item The construction is data-oblivious, as in \cite{AlK17,DiS18,HuS20,Jac15,Jac17,JaCa17,JLB13,Oym16,YCP15,YBK18,ZhS20};
\item The bit complexity of our encoding, i.e., the number of bits (or dimension of the Hamming cube) required to encode the Euclidean distances within the given set of points, is \green{optimal for finite sets (when picking $I$ using random selectors, see Remark~\ref{rem:optBitCompRS})}. Indeed, any \emph{oblivious} random binary encoding scheme $(f,d)$ that embeds, with some given probability, any given finite set of $N$ points into $\{-1,1\}^m$ with an additive error of $\delta$, must satisfy $m\geq C \delta^{-2}\log N$ (see \cite{DMS21a}, whose proof is based on \cite{AlK17}). Similar (near-)optimal bit complexity estimates were achieved in \cite{DiS18,HuS20,Jac15,Jac17,JaCa17,YCP15,ZhS20} (see also \cite{InW21} and \cite{AlK17} for methods with optimal bit complexity for the related tasks of encoding distances up to a multiplicative error and encoding squared distances up to an additive error, respectively). 
\item The bit complexity estimate is in terms of more refined complexity measures of the dataset than the cardinality of the set, which in particular makes the result applicable to arbitrary infinite datasets in $\R^n$ (as in \cite{HuS20,Jac15,Jac17,JaCa17}).    
\end{enumerate} 
Let us comment in more detail on prior works \cite{DiS18,Oym16,YCP15,YBK18} that have the same goal of constructing a fast metric embedding into the Hamming cube. These works all considered finite sets of vectors on the unit sphere and strived to find a structured random matrix $A\in \R^{m\times n}$ that supports fast-matrix vector multiplication so that with high probability
\begin{equation}
\label{eqn:binEbSphere}
|d_H(\operatorname{sign}(Ax),\operatorname{sign}(Ay))-d_{S_{n-1}}(x,y)|\leq \delta, \qquad \text{for all } x,y\in T,
\end{equation}
where $d_{S_{n-1}}(x,y)$ is the normalized geodesic distance on the sphere. If $A$ is standard Gaussian, then it is straightforward to see that this holds under the optimal condition $m\gtrsim \delta^{-2}\log|T|$. As was remarked in \cite{DiS18,YCP15}, by simply applying a fast or sparse Johnson-Lindenstrauss transform before applying the Gaussian matrix, one obtains \eqref{eqn:binEbSphere} under the same condition with runtime $O(n\log n)$ if the number of points is small (e.g., if $\log|T|\lesssim \delta^2\sqrt{n}$ in case of a fast Johnson-Lindenstrauss transform). It was observed empirically in \cite{YBK18} that $A=R_I \Gamma_g D_{\eps}$ performs well and that performance deteriorates if $D_{\eps}$ is left out. The latter was rigorously established in \cite{DiS18}: it exhibits a two-point set for which \eqref{eqn:binEbSphere} fails with positive probability if $A=R_I \Gamma_g$. The best result in the former direction stems from \cite{Oym16}: it considers $A=R_I \Gamma_g D_{g_1}HD_{g_2}$, where $I$ consists of $m$ indices selected uniformly at random and $g,g_1,g_2$ are independent standard Gaussians, and shows that \eqref{eqn:binEbSphere} holds if $m\gtrsim \delta^{-3}\log|T|$ and $\log|T|\lesssim \delta^2 n^{1/3}$. Even though the embedding time of this transform is $O(n\log n)$ without restrictions, the guarantee is worse than for the simple combination of a standard Gaussian matrix and a fast or sparse Johnson-Lindenstrauss transform. Finally, let us mention that \cite{DiS18} (which fixed a proof gap in \cite{YCP15}) established a binary encoding with optimal bit complexity and runtime $O(n\log n)$ under the relaxed condition $\log|T|\lesssim \delta\sqrt{n}/\sqrt{\log(1/\delta)}$. However, this encoding is not a metric embedding, as it no longer uses the Hamming metric as the reconstruction map. Nevertheless, it illustrates that it is nontrivial to overcome the bottleneck on the number of points to be embedded.\par 
Our work improves over these earlier attempts to create a fast metric embedding into the Hamming cube. We do not require an artificial restriction on the number of points to be embedded, achieve embedding time $O(n\log n)$, and a bit complexity that is optimal for finite datasets. Moreover, our bit complexity estimate is in terms of more refined complexity measures than the cardinality of the dataset and in particular allows for the embedding of infinite sets. This bit complexity estimate matches (up to logarithmic factors) the one for the Gaussian embedding in Theorem~\ref{thm:GaussianDitheredIntro}, which is known to be optimal \cite{DMS21a}. These improvements are made possible by the novel Gaussian behavior of the double circulant matrix established in this work (uniform $\ell_1$-concentration and mapping into well-spread vectors), together with the use of the uniformly random shifts in \eqref{eqn:GaussBinEmdDefIntro}.\par    
Let us finally mention two binary encodings that improve over our construction at the expense of some of the listed properties (i)-(v). First, the dependence of $m$ on the additive error parameter $\delta$ can be improved (beyond the lower bound mentioned under (iv)) if an additional relative error term is present: this was achieved in \cite{HuS20} using a binary encoding that combines a fast Johnson-Lindenstrauss embedding with a so-called noise shaping method (see also \cite{ZhS20}). A downside of this encoding is that it is not a metric embedding into the Hamming cube. Second, it is possible to preserve distances up to (only) a multiplicative error. This is the goal of a different binary encoding scheme developed by Indyk and Wagner \cite{InW17,InW21}, which achieves the minimal bit complexity for a finite set for this setting. This scheme is, however, not a metric embedding, data-adaptive rather than oblivious, and has a higher computational complexity.  

\section{A generic binary embedding result}

The starting point of the proof of Theorem \ref{thm:DoubleCirculantEmbeddingText} is a generic embedding result from \cite{DMS21a}, which we now outline. Let $A\in \R^{m\times n}$ be a matrix and for a parameter $\lambda>0$, let $\tau$ be uniformly distributed in $[-\lambda,\lambda]^m$. Consider $f: \R^n \to \{-1,1\}^m$, defined by
\begin{equation} \label{eqn:GaussBinEmdDef}
 f(x)=\operatorname{sign}(Ax+\tau),
\end{equation}
where the sign-function is applied component-wise, and denote the normalized Hamming distance on $\{-1,1\}^m$ by
\begin{equation*}
\tilde{d}(x,y)= \frac{2\lambda \kappa}{m}  d_H(x,y).
\end{equation*}
The constant $\kappa$ turns out to be an absolute constant \red{in our application}, and its value is specified in what follows.

\vskip0.3cm
Let $0 < \theta < \delta$, and set $T_\theta \subset T$ to be a $\theta$-\red{net} of $T$ of minimal cardinality. Finally, assume that $A$ `acts well' on $T$ in the following sense:
\begin{description}
\item{$(a)$} \underline{\red{$A$ satisfies} uniform $\ell_1$-concentration on $T_{\theta}$:}
\begin{equation} \label{eqn:ell1ell2assump}
\sup_{x,y\in T_{\theta}}\left|\frac{\kappa}{m} \|A(x-y)\|_1 - \|x-y\|_2\right|\leq \delta.
\end{equation}
\item{$(b)$} \underline{$A$ maps $T$ to `well-spread' vectors:} For $\red{k}=\lfloor\delta m/\lambda\rfloor$ we have that
\begin{equation} \label{eqn:knormbias}
\frac{1}{\sqrt{\red{k}}} \sup_{x\in T_\theta} \|Ax\|_{[\red{k}]}\leq \lambda,
\end{equation}
and
\begin{equation} \label{eqn:knormoscillations}
\frac{1}{\sqrt{\red{k}}} \sup_{x\in (T-T)\cap \theta B_2^n} \|Ax\|_{[\red{k}]}\leq \delta.
\end{equation}
\end{description}
\vskip0.3cm
Then one has the following:
\begin{tcolorbox}
\begin{Theorem} \label{thm:mainGeneric} \cite{DMS21a}
There exist absolute constants $c_1,c_2$ and $c_3$ such that the following holds. Let
$$
m \geq c_1 \lambda^2 \kappa^2 \frac{\log\mathcal{N}(T,\theta)}{\delta^2}
$$
and assume that $A$ satisfies \eqref{eqn:ell1ell2assump}, \eqref{eqn:knormbias}, and \eqref{eqn:knormoscillations}. Then with probability at least \red{$1-2 \exp(-c_2 \delta^2 m/\lambda^2 \kappa^2)$},
\begin{equation*}
\sup_{x,y\in T}\left|\tilde{d}(f(x),f(y))-\|x-y\|_2\right|\leq c_3 (\kappa+1) \delta.
\end{equation*}
\end{Theorem}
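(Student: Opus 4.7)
The plan is to decompose $|\tilde{d}(f(x), f(y)) - \|x-y\|_2|$ into three controllable errors: concentration of $d_H(f(x), f(y))$ around its conditional expectation over $\tau$, a truncation error between that expectation and $(\kappa/m)\|A(x-y)\|_1$, and the hypothesized $\ell_1$-embedding distortion from \eqref{eqn:ell1ell2assump}. The starting point, valid for any fixed $x, y \in \R^n$, is the identity
\begin{equation*}
\E_\tau\bigl[\tilde{d}(f(x), f(y))\bigr] = \frac{\kappa}{m}\|\phi_\lambda(Ax) - \phi_\lambda(Ay)\|_1,
\end{equation*}
where $\phi_\lambda(t) = \max\{-\lambda, \min\{\lambda, t\}\}$ is the truncation at $\pm\lambda$. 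This follows by computing, for each coordinate, the probability that $\tau_i$ lands in the interval on which the signs of $(Ax)_i + \tau_i$ and $(Ay)_i + \tau_i$ disagree.

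I first handle $x, y \in T_\theta$. The bias bound \eqref{eqn:knormbias} ensures that at most $k = \lfloor \delta m/\lambda \rfloor$ coordinates of $Ax$ exceed $\lambda$ in absolute value --- otherwise the top-$k$ $\ell_2$-norm of $Ax$ would exceed $\lambda\sqrt{k}$ --- and Cauchy--Schwarz together with \eqref{eqn:knormbias} yields $\|Ax - \phi_\lambda(Ax)\|_1 \leq \sqrt{k}\,\|Ax\|_{[k]} \leq k\lambda$. Consequently,
\begin{equation*}
\bigl|\E_\tau \tilde{d}(f(x), f(y)) - (\kappa/m)\|A(x-y)\|_1\bigr| \leq 2\kappa k\lambda/m \leq 2\kappa\delta,
\end{equation*}
and combining with \eqref{eqn:ell1ell2assump} gives $|\E_\tau \tilde{d}(f(x), f(y)) - \|x-y\|_2| \lesssim (\kappa+1)\delta$. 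Since $d_H(f(x), f(y))$ is a sum of $m$ independent Bernoullis in $\tau$, Hoeffding yields $|\tilde{d}(f(x), f(y)) - \E_\tau \tilde{d}(f(x), f(y))| \leq \delta$ with failure probability at most $2\exp(-c\delta^2 m/(\lambda^2 \kappa^2))$. A union bound over the $|T_\theta|^2$ pairs, absorbed by the assumption $m \geq c_1 \lambda^2 \kappa^2 \log \mathcal{N}(T,\theta)/\delta^2$, establishes the conclusion on $T_\theta$.

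The main obstacle is extending the bound from $T_\theta$ to all of $T$. For $x \in T$ fix a closest net point $x' \in T_\theta$ so that $\|x - x'\|_2 \leq \theta$. The decisive deterministic observation is that a sign flip at coordinate $i$ between $f(x)$ and $f(x')$ can occur only when $|(A(x-x'))_i| > |(Ax')_i + \tau_i|$. Splitting the $m$ coordinates into the top-$k$ of $A(x-x')$ (which contribute at most $k$ flips for free) and the rest, on which the oscillation bound \eqref{eqn:knormoscillations} forces $|(A(x-x'))_i| \leq \|A(x-x')\|_{[k]}/\sqrt{k} \leq \delta$, we obtain
\begin{equation*}
d_H(f(x), f(x')) \leq k + \bigl|\{i : |(Ax')_i + \tau_i| < \delta\}\bigr|.
\end{equation*}
Crucially, the right-hand side depends on $x$ only through its net point $x'$. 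Its $\tau$-expectation is at most $m\delta/\lambda$ for each $x' \in T_\theta$, and a Hoeffding bound combined with a union bound over $T_\theta$ yields $d_H(f(x), f(x')) \lesssim m\delta/\lambda$ uniformly in $x \in T$ with the required probability, whence $\tilde{d}(f(x), f(x')) \lesssim \kappa\delta$. The triangle inequality
\begin{equation*}
\tilde{d}(f(x), f(y)) \leq \tilde{d}(f(x'), f(y')) + \tilde{d}(f(x), f(x')) + \tilde{d}(f(y), f(y'))
\end{equation*}
together with $|\|x-y\|_2 - \|x'-y'\|_2| \leq 2\theta \leq 2\delta$ then completes the argument. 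The choice $k = \lfloor \delta m/\lambda \rfloor$ is precisely the one that matches the trivial top-$k$ flip count with the typical concentration term, producing the $O(\kappa\delta)$ slack after the $2\lambda\kappa/m$ rescaling.
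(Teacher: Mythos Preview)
The paper does not prove Theorem~\ref{thm:mainGeneric}; it is quoted from \cite{DMS21a} and used as a black box, so there is no in-paper proof to compare against. Judged on its own, your argument is correct and is the natural proof that the hypotheses \eqref{eqn:ell1ell2assump}--\eqref{eqn:knormoscillations} are tailored to: the identity $\E_\tau\tilde d(f(x),f(y))=\tfrac{\kappa}{m}\|\phi_\lambda(Ax)-\phi_\lambda(Ay)\|_1$, the truncation bound via \eqref{eqn:knormbias}, Hoeffding plus a union bound over $T_\theta\times T_\theta$, and the extension to $T$ via the flip-count estimate driven by \eqref{eqn:knormoscillations} are exactly the ingredients one expects in the original proof in \cite{DMS21a}.

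One small point worth tightening: in the step where you union-bound the events $\{|\{i:|(Ax')_i+\tau_i|<\delta\}|>Cm\delta/\lambda\}$ over $x'\in T_\theta$, the Hoeffding failure probability is $\exp(-cm\delta^2/\lambda^2)$, while the hypothesis on $m$ only guarantees $\log\mathcal N(T,\theta)\le \delta^2 m/(c_1\lambda^2\kappa^2)$. If $\kappa$ is allowed to be arbitrarily small this does not directly absorb the union bound. This is harmless in the paper's application ($\kappa=\sqrt{\pi/2}$), and in general one simply notes that the theorem's stated failure probability $\exp(-c_2\delta^2 m/(\lambda^2\kappa^2))$ is the bottleneck from the $T_\theta\times T_\theta$ Hoeffding step, so the constants can be chosen consistently once $\kappa$ is bounded away from zero; but it is worth a sentence.
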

\end{tcolorbox}
If $A$ is a standard Gaussian matrix \red{and the conditions of Theorem~\ref{thm:GaussianDitheredIntro} hold}, then \eqref{eqn:ell1ell2assump}, \eqref{eqn:knormbias}, and \eqref{eqn:knormoscillations} \blue{(with $\kappa=\sqrt{\pi/2}$)} are immediate outcomes of \eqref{eqn:ell2ell1IsomGaussian} and \eqref{eqn:knormGaussian}.

Thanks to Theorem \ref{thm:mainGeneric}, it is clear that proving the analogs of \eqref{eqn:ell1ell2assump}, \eqref{eqn:knormbias}, \red{and \eqref{eqn:knormoscillations}} would yield a binary embedding estimate---and the proof of Theorem \ref{thm:DoubleCirculantEmbeddingText}. The rest of the article is devoted to the proof of those estimates for the double circulant matrix.

\section{Strong regularity} \label{sec:knormEst}
\blue{For $x\in \R^n$,} set $\|x\|_0 =|{\rm supp}(x)|$. The vector $x$ is called $s$-sparse if $\|x\|_0 \leq s$. Denote by $U_s \subset S^{n-1}$ the set of all $s$-sparse vectors on the Euclidean unit sphere and let
$$
\Sigma_{s,n} = \{x\in \R^n \ : \ \|x\|_0\leq s, \ \|x\|_2\leq 1\}
$$
be the set of $s$-sparse vectors in the Euclidean unit ball.

A matrix $B \in \R^{m\times n}$ satisfies the $(s,\delta)$-\emph{Restricted Isometry Property} if
$$
\sup_{x \in U_s} \left| \ \|Bx\|_2^2 - \|x\|_2^2 \ \right|\leq \delta,
$$
and \red{we} denote this property by  RIP$(s,\delta)$.

\vskip0.3cm
The following definition is of crucial importance in the context of the proof.
\begin{tcolorbox}
\begin{Definition} \label{def:regular}
Let $\rho>0$ \red{and set $s_{\rho}=\lceil\rho^{-2}\rceil$.} A matrix $B\red{\in \R^{m\times n}}$ is \emph{$\red{\rho}$-regular} if it satisfies \emph{RIP}$(r,\rho\sqrt{r})$ for all $1\leq r\leq \red{s_{\rho}}$. It is \emph{$\red{\rho}$-strongly regular} if it is $\red{\rho}$-regular and, in addition, satisfies 
\begin{align*}
\sup_{x\in \Sigma_{r,n}}\|Bx\|_{[r]}\leq \rho\sqrt{r}, \quad \text{for all } 1\leq r\leq \red{s_{\rho}}.
\end{align*}
\end{Definition}
\end{tcolorbox}
In other words, \red{thinking of the case where $\rho$ is small}, $B$ is regular if it is an almost Euclidean isometry on sufficiently sparse \blue{unit} vectors, and it is strongly regular if \blue{additionally}, for any sufficiently sparse \blue{unit} vector $x$, $Bx$ is relatively well-spread: if $x$ is $r$-sparse, then \blue{$1-\rho\sqrt{r}\leq \|Bx\|_2^{\red{2}}\leq 1+\rho\sqrt{r}$} but the contribution of the $r$ largest coordinates \blue{of $Bx$ to its} Euclidean norm is at most \blue{$\rho \sqrt{r}$}.

To put this notion in perspective, consider the standard Gaussian matrix $A\in \R^{m\times n}$. \blue{Then for any} $u \geq 1$, with probability at least $1-2\exp(-c_0 u^2 r \log(em/r))$,
$$\sup_{x \in U_r}\left| \ \frac{1}{m}\|Ax\|_2^2 - \|x\|_2^2 \ \right|\leq c_1 u\sqrt{\frac{r\log(em/r)}{m}}$$
and
$$
\sup_{x\in \Sigma_{r,n}}\frac{1}{\sqrt{m}}\|Ax\|_{[r]}\leq c_1 u \sqrt{\frac{r\log(em/r)}{m}}.
$$
Hence, by taking the union bound over $1\leq r\leq m$ we find that with nontrivial probability, the Gaussian matrix $\frac{1}{\sqrt{m}}A$ is strongly $\red{\rho}$-regular for $\rho \sim \sqrt{\frac{\log \blue{m}}{m}}$.

\vskip0.3cm
The main result of this section is that by randomizing the columns of a strongly regular matrix $B$ using independent random signs, one obtains a matrix that is `well-behaved' on an arbitrary set. To formulate this claim, let $D_\eps$ be a diagonal matrix whose diagonal entries are independent, symmetric random signs $\eps_1,...,\eps_n$, and for $T \subset \R^n$ set
$$
d^*(T) = \left(\frac{\ell_*(T)}{{\cal R}(T)}\right)^2.
$$
\begin{tcolorbox}
\begin{Theorem}\label{thm:new}
There exists an absolute constant $c$ such that the following holds. Let $0<\rho<1/\sqrt{\log(m+n)}$ and consider $1 \leq k \leq m$. Assume that $B\in \R^{m\times n}$ is $\red{\rho}$-strongly regular. If $T\subset \R^n$, \red{$R\geq {\cal R}(T)$,} and $u \geq 1$ then with probability at least
$$
1-2\exp\left(-cu^2\left[\red{\ell_*^2(T)}+\red{R^2} k\log(em/k)\right]/\red{R^2}\right)
$$
with respect to $\eps$, we have that
\begin{align*}
& \sup_{x\in T} \left\| B  D_{\eps} x\right\|_{[k]}
\\
\leq & u^2 \left[ \rho \left(\ell_*(T)+ \red{R} \cdot \sqrt{k\log(em/k)}\right)  + \rho^2 \red{R}^{-1} \left(\ell_*^2(T)+ \red{R}^2 \cdot k\log(em/k)\right)\right].
\end{align*}
\end{Theorem}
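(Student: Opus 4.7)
The plan is to view the quantity as a conditionally-on-$B$ Rademacher process in $\eps$ by writing
\begin{equation*}
\sup_{x \in T}\|BD_\eps x\|_{[k]} \;=\; \sup_{(x,v)\in T\times U_k}\sum_{i=1}^n \eps_i\,x_i (B^T v)_i,
\end{equation*}
which is sub-Gaussian in $\eps$ with natural pseudo-metric $d((x,v),(x',v')) = \|D_x B^T v - D_{x'}B^T v'\|_2$ and with a corresponding $\ell_\infty$-pseudo-metric $d_\infty$. Talagrand's two-scale tail bound for Rademacher processes then controls both the expectation and the tails of this supremum via $\gamma_2(T\times U_k,d)$, $\gamma_1(T\times U_k,d_\infty)$, and the corresponding diameters. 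The task reduces to converting strong regularity of $B$ into bounds on these complexities in terms of $\ell_*(T)$, $R$, and the Gaussian width $\sqrt{k\log(em/k)}$ of $U_k$.

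The key geometric input extracted from strong regularity is a dual statement: for every $v\in U_r$ with $r\leq s_\rho$ one has $\|B^T v\|_{[r]}\leq \rho\sqrt{r}$, obtained from the identity $\sup_{v\in U_r}\|B^T v\|_{[r]} = \sup_{x\in\Sigma_{r,n},\,v\in U_r}\langle Bx,v\rangle = \sup_{x\in\Sigma_{r,n}}\|Bx\|_{[r]}$. So $B^T$ also sends sparse vectors to well-spread ones; extending this to general $v\in U_k$ by dyadically splitting $\operatorname{supp}(v)$ into $s_\rho$-sparse pieces yields bounds of the form $\|B^T v\|_\infty\lesssim\rho$ and $\|B^T v\|_2\lesssim 1$ in the regime where strong regularity applies. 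In parallel, I would run a Cand\`es--Tao-style dyadic-magnitude decomposition $x=\sum_j x^{(j)}$ of every $x\in T$, where $x^{(j)}$ is $O(2^j)$-sparse with $\ell_\infty$-norm controlled by $\|x\|_2/\sqrt{2^j}$. Pairing the two decompositions and invoking strong regularity blockwise yields an upper bound on $d((x,v),(x',v'))$ in terms of the Euclidean distances $\|x-x'\|_2$ and $\|v-v'\|_2$, each weighted by a factor of order $\rho$ and the appropriate diameter. This decoupling of the product index $T\times U_k$ is the essential point.

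Given the decoupled metric estimate, the chaining step factorizes. One has $\gamma_2(T,\|\cdot\|_2)\lesssim \ell_*(T)$ by Talagrand's majorizing measures theorem, and $\gamma_2(U_k,\|\cdot\|_2)\lesssim \sqrt{k\log(em/k)}$ by the standard estimate on the Gaussian width of $k$-sparse unit vectors in $\R^m$. Combining these gives a chaining estimate of order
\begin{equation*}
\gamma_2(T\times U_k,d)\;\lesssim\;\rho\bigl(\ell_*(T)+R\sqrt{k\log(em/k)}\bigr),
\end{equation*}
which is the leading term of the claimed bound. The $\rho^2 R^{-1}(\cdots)$ correction term comes from the sub-exponential ($\ell_\infty$-diameter) contribution in the Rademacher deviation bound, where the extra factor of $\rho$ is produced by the transposed bound $\|B^T v\|_\infty\lesssim\rho$. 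Writing the combined sub-Gaussian/sub-exponential tail as $\exp(-cu^2[\cdots])$ and using $u\leq u^2$ for $u\geq 1$ lifts both contributions onto the common $u^2$-scale that appears in the statement.

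The main obstacle is producing the metric estimate on $d$ with the right decoupled form. A naive bound such as $\|D_x B^T v\|_2\leq \|x\|_\infty\|B^T v\|_2$ would replace $\ell_*(T)$ by $R\sqrt{\log n}$-type quantities and would completely waste the well-spread structure of $B^T v$ guaranteed by strong regularity. The block-sparse decomposition is what fixes this: each dyadic magnitude block of $x$ is paired with a sparsity shell of $v$, so that every cross-interaction inherits one of the $\rho$-gains from strong regularity. Coordinating the two decompositions so that the chaining contributions sum cleanly to $\ell_*(T)+R\sqrt{k\log(em/k)}$, without accumulating a logarithmic factor from the number of dyadic shells, is the delicate part of the argument.
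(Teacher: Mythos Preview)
Your approach is genuinely different from the paper's, and the gap you flag at the end --- avoiding a logarithmic loss when coordinating the two dyadic decompositions --- is exactly the step the paper's argument sidesteps entirely.

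The paper does not chain the Rademacher process indexed by $T\times U_k$ directly. Instead it reduces to a known black box, Theorem~\ref{thm:Men21} (from \cite{Men21}), which already controls $\sup_{x\in S}\bigl|\|BD_\eps x\|_2^2-\|x\|_2^2\bigr|$ for a \emph{regular} matrix $B$ and arbitrary $S$. The reduction is a polarization trick: since $\Sigma_{k,m}$ is unconditional, one inserts an independent sign diagonal $D_\zeta$ and writes
\[
\sup_{x\in T'}\|BD_\eps x\|_{[k]} \;=\; \sup_{x\in T',\,y\in\Sigma_{k,m}}\bigl|\langle BD_\eps x, D_\zeta y\rangle\bigr|,
\]
then polarizes the inner product into a difference of squared $\ell_2$-norms of $[B\ \operatorname{Id}_m]\,\operatorname{diag}(D_\eps,D_\zeta)\,(x,\pm y)^T$. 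A one-line observation (Lemma~\ref{lem:new}) shows that if $B$ is $\rho$-strongly regular then the block matrix $[B\ \operatorname{Id}_m]\in\R^{m\times(n+m)}$ is $3\rho$-regular, so Theorem~\ref{thm:Men21} applies directly to the product set $\tilde T = T'\times\Sigma_{k,m}$. The claimed bound then drops out from $\ell_*(\tilde T)\sim\ell_*(T')+\sqrt{k\log(em/k)}$ and ${\cal R}(\tilde T)\sim 1$.

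This reduction is what buys the clean $\ell_*(T)+R\sqrt{k\log(em/k)}$ with no extraneous logarithm: all the hard chaining has already been absorbed into Theorem~\ref{thm:Men21}, over a single index set $\tilde T$ with a single Euclidean metric, rather than over a product set with the coupled pseudo-metric $d((x,v),(x',v'))=\|D_x B^T v - D_{x'}B^T v'\|_2$. Your plan, by contrast, requires bounding this coupled metric in a decoupled way, and it is not at all clear that pairing the dyadic shells of $x$ with those of $v$ yields an estimate of the form $C\rho(\|x-x'\|_2+R\|v-v'\|_2)$ without picking up a $\log n$ from the number of shells --- you say as much yourself. So the proposal as it stands has a real gap precisely at its ``delicate part'', and the paper's polarization/block-matrix device is the missing idea that closes it.
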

\end{tcolorbox}

The proof of Theorem \ref{thm:new} is based on \blue{the following} analogous result on column randomization of a regular matrix, \blue{which was} established in \cite{Men21}.

\begin{Theorem} \label{thm:Men21}
There exist absolute constants $c,C>0$ such that the following holds. Let $0<\rho<1/\sqrt{\log n}$. Assume that $B \in \R^{m\times n}$ is $\red{\rho}$-regular. Then for any $T\subset \R^n$ and $u \geq 1$, with probability at least $1-2\exp(-cu^2d^*(T))$ with respect to $\eps$,
$$
\sup_{x\in T}\left|\|B D_{\eps} x\|_2^2 - \|x\|_2^2\right|\leq Cu^2{\cal R}^2(T) \cdot (\rho  \sqrt{d^*(T)} +\rho^2 d^*(T)).
$$
\end{Theorem}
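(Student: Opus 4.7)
The plan is a two-step argument: first, decompose $\|BD_\eps x\|_2^2 - \|x\|_2^2$ into a deterministic diagonal term and a Rademacher chaos of order two; second, apply the generic chaining deviation inequality for suprema of such chaos processes, translating the $\rho$-regularity of $B$ into quantitative bounds on the relevant chaining metrics.

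First, set $A = B^T B$ and expand $BD_\eps x = \sum_i \eps_i x_i Be_i$ to get
\[
\|BD_\eps x\|_2^2 - \|x\|_2^2 = \sum_i (A_{ii}-1)x_i^2 + \sum_{i\neq j} A_{ij} x_i x_j \eps_i \eps_j.
\]
The first sum is deterministic, and RIP at sparsity one (with parameter $\rho$) applied to each $e_i$ gives $|A_{ii}-1|\le \rho$, so this piece is uniformly at most $\rho\,{\cal R}(T)^2$. Since $\ell_*(T)\ge \sqrt{2/\pi}\,{\cal R}(T)$ (the Gaussian width of any radius-${\cal R}(T)$ point in $T$ is already of order ${\cal R}(T)$), this contribution is absorbed into the $\rho\sqrt{d^*(T)}\,{\cal R}(T)^2$ summand in the claimed bound.

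The heart of the argument is the off-diagonal chaos. Associate to each $x\in T$ the hollow symmetric matrix $M_x$ with $(M_x)_{ij}=A_{ij}x_ix_j$ for $i\neq j$, so that the off-diagonal piece equals $\langle \eps, M_x\eps\rangle$. After the standard decoupling step (replacing one copy of $\eps$ by an independent $\eps'$), I would invoke the generic chaining deviation inequality for suprema of second-order Rademacher chaos (Talagrand; see also Krahmer--Mendelson--Rauhut and Dirksen), which yields a tail of the form
\[
\bP\Big(\sup_{x\in T}|\langle \eps, M_x\eps\rangle| \ge C[\cF(T)+u\,d_F(T)+u^2\,d_{op}(T)]\Big)\le 2e^{-cu^2},
\]
where $\cF(T)$ is a Talagrand-style two-level chaining functional with respect to the Frobenius and operator pseudo-metrics on $\{M_x : x\in T\}$, and $d_F(T),d_{op}(T)$ bound $\sup_{x\in T}\|M_x\|_F$ and $\sup_{x\in T}\|M_x\|_{op}$.

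The main obstacle, and the most technical step, is to convert $\rho$-regularity into estimates for $\cF(T)$ and the two diameters. For an $r$-sparse unit vector $x$ with $r\le s_\rho$, RIP$(r,\rho\sqrt r)$ forces the restriction of $A-I$ to $\mathrm{supp}(x)$ to have operator norm at most $\rho\sqrt r$, so $\|M_x\|_{op}\le \|M_x\|_F \lesssim \rho\sqrt{r}$; moreover, these bounds are Lipschitz in $x$ with respect to $\|\cdot\|_2$ on sparse vectors. For a general $x\in T$ I would use a multiscale sparse approximation -- truncating $x$ to its $r_k=2^k$ largest coordinates up to $r\sim s_\rho=\lceil \rho^{-2}\rceil$ -- combined with a Dudley-type bound applied to the standard Gaussian chaining of $T$, to express $\cF(T)$ and the diameters in terms of $\ell_*(T)$ and ${\cal R}(T)$. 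The Frobenius branch of the analysis contributes at most $\rho\,\ell_*(T)\,{\cal R}(T)=\rho\sqrt{d^*(T)}\,{\cal R}(T)^2$, and the operator branch at most $\rho^2\ell_*(T)^2=\rho^2 d^*(T)\,{\cal R}(T)^2$. Finally, rescaling the deviation parameter (so that $u$ effectively plays the role of $u\sqrt{d^*(T)}$ in the chaos inequality) converts the $e^{-cu^2}$ tail into the desired $2\exp(-cu^2 d^*(T))$, completing the proof.
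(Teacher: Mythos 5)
First, a point of reference: the paper does not prove Theorem~\ref{thm:Men21} at all --- it is imported verbatim from \cite{Men21} --- so there is no internal proof to compare against. Parts of your plan are sound: with $A=B^TB$, the diagonal term is handled by RIP at sparsity one, and since $\ell_*(T)\ge\sqrt{2/\pi}\,{\cal R}(T)$ it is absorbed into $\rho\sqrt{d^*(T)}\,{\cal R}^2(T)$. The Frobenius branch is also salvageable: polarization over $2$-sparse vectors gives $|A_{ij}|\lesssim\rho$ for $i\ne j$, hence $\|M_x-M_y\|_F\lesssim\rho\,{\cal R}(T)\|x-y\|_2$ and, by the majorizing measures theorem (not Dudley, which would cost a logarithm), the Frobenius chaining functional is $\lesssim\rho\,{\cal R}(T)\ell_*(T)$.

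The genuine gap is in the operator-norm branch and in your rescaling of the deviation parameter. The tail of the chaos inequality at level $t$ carries a term $t^2 d_{op}(T)$ with $d_{op}(T)=\sup_{x\in T}\|M_x\|_{op}$, and you propose $t=u\sqrt{d^*(T)}$. But $d_{op}(T)$ is \emph{not} $\lesssim\rho^2{\cal R}^2(T)+\rho\,{\cal R}^2(T)/\sqrt{d^*(T)}$ as your accounting requires: already a single $2$-sparse unit vector $x=(e_1+e_2)/\sqrt2$ in $T$ gives $\|M_x\|_{op}=|A_{12}|/2$, which for a generic $\rho$-regular $B$ is of order $\rho$. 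The term $t^2d_{op}(T)$ is then of order $u^2 d^*(T)\,\rho\,{\cal R}^2(T)$, which exceeds the claimed bound $u^2{\cal R}^2(T)\bigl(\rho\sqrt{d^*(T)}+\rho^2 d^*(T)\bigr)$ by a factor of order $\min\{\sqrt{d^*(T)},\rho^{-1}\}$ whenever $d^*(T)\gg 1$ (take $T=B_2^n$, $\rho=n^{-1/4}$ for a concrete failure). The point is that the $\rho^2 d^*(T)$ term in the theorem reflects the operator norm of $A-\operatorname{Id}$ aggregated from RIP over blocks of size up to $s_\rho$, and it must come out of the chaining itself; it cannot be obtained as (deviation level)$\times$(global operator-norm diameter). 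Closing this requires intertwining the chaining with the tail estimate so that the level-$u^2d^*(T)$ price is paid only on the appropriately coarse or well-spread part of each $x$ --- which is exactly the technical content of \cite{Men21}. As written, your argument yields a strictly weaker bound, and the decisive step (controlling the operator-norm chaining functional of $\{M_x:x\in T\}$ by $\rho^2\ell_*^2(T)$ using only $\rho$-regularity) is asserted rather than proved.
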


The following simple lemma is the key to the proof of Theorem \ref{thm:new}. \blue{In what follows, $\operatorname{Id}_{m}\in \R^{m\times m}$ denotes the identity matrix.}

\begin{Lemma}\label{lem:new}
Let $B\in \R^{m\times n}$, $\rho>0$. If $B$ is $\red{\rho}$-strongly regular, then
 $\begin{bmatrix}
	B  & \operatorname{Id}_{m}
	\end{bmatrix}\blue{\in \R^{m\times (n+m)}}$
	is $\red{3\rho}$-regular.
\end{Lemma}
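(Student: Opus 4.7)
The plan is to unfold the RIP condition for the concatenated matrix $M = [B \mid \operatorname{Id}_m]$ by splitting an $r$-sparse unit vector $z\in\R^{n+m}$ as $z=(x,y)$ with $x\in\R^n$, $y\in\R^m$, so that $Mz=Bx+y$. Writing $s_x=\|x\|_0$ and $s_y=\|y\|_0$, the sparsity assumption $\|z\|_0\leq r$ forces $s_x+s_y\leq r$. I would first observe that for the range of $r$ we need to treat, namely $1\leq r\leq s_{3\rho}=\lceil(3\rho)^{-2}\rceil$, we automatically have $r\leq s_{\rho}=\lceil\rho^{-2}\rceil$, so the full $\rho$-strong regularity of $B$ is available at every index that appears in the argument.

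The key algebraic identity is
$$\|Mz\|_2^2-\|z\|_2^2=\bigl(\|Bx\|_2^2-\|x\|_2^2\bigr)+2\langle Bx,y\rangle,$$
since the $\|y\|_2^2$ contributions cancel. The first summand is controlled directly by the RIP part of $\rho$-regularity of $B$, applied to $x$ which is $s_x\leq r$ sparse; this yields a bound of $\rho\sqrt{r}\,\|x\|_2^2$. The cross term is where the strong regularity of $B$ is crucial. Because $y$ is supported on a set $J$ of size $s_y\leq r$, Cauchy--Schwarz on the support gives
$$|\langle Bx,y\rangle|=|\langle (Bx)|_J,y\rangle|\leq \|Bx\|_{[s_y]}\|y\|_2\leq \|Bx\|_{[r]}\|y\|_2,$$
by monotonicity of the top-$k$ norm. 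Since $x$ is $s_x\leq r$ sparse, the strong regularity assumption at index $r$ delivers $\|Bx\|_{[r]}\leq \rho\sqrt{r}\,\|x\|_2$.

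Combining these estimates and using the elementary inequality $2\|x\|_2\|y\|_2\leq \|x\|_2^2+\|y\|_2^2$ together with $\|z\|_2^2=\|x\|_2^2+\|y\|_2^2=1$, I obtain
$$\bigl|\|Mz\|_2^2-\|z\|_2^2\bigr|\leq \rho\sqrt{r}\,\|x\|_2^2+\rho\sqrt{r}\bigl(\|x\|_2^2+\|y\|_2^2\bigr)\leq 2\rho\sqrt{r},$$
which is comfortably below the required $3\rho\sqrt{r}$. Taking the supremum over $r$-sparse unit vectors $z$ and over $1\leq r\leq s_{3\rho}$ then yields the claimed $3\rho$-regularity.

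There is no real obstacle here; the only subtle point is making sure one applies strong regularity of $B$ at a common index $r$ rather than at the individual sparsities $s_x$ and $s_y$. This is handled by the monotonicity $\|Bx\|_{[s_y]}\leq \|Bx\|_{[r]}$ together with the fact that a vector of sparsity $s_x\leq r$ is, trivially, also $r$-sparse. The slack factor $3$ in the statement (versus the $2$ my argument produces) simply absorbs this convenience and does not require sharper estimates.
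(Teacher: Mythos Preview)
Your proof is correct and follows essentially the same approach as the paper: split $z=(x,y)$, expand $\|Mz\|_2^2-\|z\|_2^2=(\|Bx\|_2^2-\|x\|_2^2)+2\langle Bx,y\rangle$, bound the first term by the RIP part of $\rho$-regularity, and bound the cross term via $|\langle Bx,y\rangle|\leq \|Bx\|_{[r]}\|y\|_2$ together with strong regularity. The only difference is cosmetic: the paper takes suprema over $x\in\Sigma_{r,n}$ and $y\in\Sigma_{r,m}$ independently and obtains exactly $3\rho\sqrt{r}$, whereas you additionally exploit the constraint $\|x\|_2^2+\|y\|_2^2=1$ via $2\|x\|_2\|y\|_2\leq 1$ to get the slightly sharper $2\rho\sqrt{r}$.
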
	

\begin{proof}
Let $1\leq r\leq \red{\lceil(3\rho)^{-2}\rceil \leq \lceil\rho^{-2}\rceil}$. Clearly,
	\begin{align*}
	&\sup_{x\in \Sigma_{r,n}}\sup_{y\in \Sigma_{r,m}}\left|\left\|\begin{bmatrix}
	B & \operatorname{Id}_{m}
	\end{bmatrix}
	\begin{bmatrix}
	x \\
	y
	\end{bmatrix}
	\right\|_2^2 - \left\|\begin{bmatrix}
	x \\
	y
	\end{bmatrix}
	\right\|_2^2\right|
\\
	&\qquad \leq \sup_{x\in \Sigma_{r,n}} \left|  \|Bx\|_2^2 - \|x\|_2^2\right| + 2 \sup_{x\in \Sigma_{r,n}} \sup_{y\in \Sigma_{r,m}} |\inr{B x,y}|
\\
	&\qquad \leq \sup_{x\in \Sigma_{r,n}} \left|  \|B x\|_2^2 - \|x\|_2^2\right| + 2 \sup_{x\in \Sigma_{r,n}} \|B x\|_{[r]} \leq 3\rho\sqrt{r},
	\end{align*}
	and the result follows.
\end{proof}

\noindent{\bf Proof of Theorem \ref{thm:new}.} Set $R \geq {\cal R}(T)=\sup_{t \in T} \|t\|_2 $ and consider $T^\prime=\{t/R : t \in T\}$. We shall use \blue{the} fact that $\Sigma_{k,m}$ is an unconditional set: for any choice of signs $\zeta_1,...,\zeta_m$, $D_\zeta \Sigma_{k,m} = \Sigma_{k,m}$. \blue{This} allows one to introduce additional randomness. Indeed, let $\zeta_1,...,\zeta_m$ be independent, symmetric \red{random signs} that are also independent of $(\eps_i)_{i=1}^n$. Then
	\begin{equation}\label{eq:rescaled_set}
	\sup_{x\in T} \left\| B D_\eps x\right\|_{[k]}=R \sup_{x\in T^\prime} \left\| B D_\eps x\right\|_{[k]}
	\end{equation}
	and
	\begin{equation*}
	\sup_{x\in T^\prime} \left\| B D_\eps x\right\|_{[k]}  = \sup_{x\in T^\prime} \sup_{y \in \Sigma_{k,m}} |\inr{B D_\eps x,y}| =  \sup_{x\in T'} \sup_{y\in \Sigma_{k,m}} \left|\inr{ B D_\eps x, D_{\zeta} y}\right|.
	\end{equation*}
Moreover, by the polarization identity,
	\begin{align*}
	&  \left|\inr{ B D_\eps x, D_{\zeta}y}\right|
	 =  \frac{1}{4} \left| \left\|  B D_\eps x+D_{\zeta}y \right\|_2^2- \left\| BD_\eps x-D_{\zeta}y \right\|_2^2 \right|
\\
	& = \frac{1}{4}\left|\left\|
\begin{bmatrix}
	 BD_\eps  & D_{\zeta}
	\end{bmatrix}
	\begin{bmatrix}
	x \\
	y
	\end{bmatrix}
	\right\|_2^2
	-  \left\|
\begin{bmatrix}
	x \\
	y
	\end{bmatrix}
	\right\|_2^2
	 - \left(\left\|
\begin{bmatrix}
	 B D_\eps & D_{\zeta}
	\end{bmatrix}
	\begin{bmatrix}
	x \\
	-y
	\end{bmatrix}
	\right\|_2^2 - \left\|
\begin{bmatrix}
	x \\
	-y
	\end{bmatrix}
	\right\|_2^2\right) \right|.
	\end{align*}
	Clearly,
	$$\begin{bmatrix}
	 B D_{\eps} & D_{\zeta}
	\end{bmatrix} = \begin{bmatrix}
	 B  & \operatorname{Id}_{m}
	\end{bmatrix}
	\begin{bmatrix}
	D_{\eps} & 0 \\
	0 & D_{\zeta}
	\end{bmatrix}
	$$
	\blue{and hence}
	\begin{align}\label{eq:setting_of_thm:ORS}
	\sup_{x\in T^\prime} \left\| B D_{\eps}x\right\|_{[k]}
	\leq \frac{1}{2}\sup_{(x,y)^T\in \tilde{T}}\left|\left\|
\begin{bmatrix}
	 B & \operatorname{Id}_{m}
	\end{bmatrix}
	\begin{bmatrix}
	D_{\eps} & 0 \\
	0 & D_{\zeta}
	\end{bmatrix}
	\begin{bmatrix}
	x \\
	y
	\end{bmatrix}
	\right\|_2^2
	-  \left\|\begin{bmatrix}
	x \\
	y
	\end{bmatrix}
	\right\|_2^2\right|,
	\end{align}
	where $\tilde{T}=T^\prime\times \Sigma_{k,m}$.
	Lemma~\ref{lem:new} implies that
	$\begin{bmatrix}
	B  & \operatorname{Id}_{m}
	\end{bmatrix}$
is $\red{3\rho}$-regular, and one may therefore invoke Theorem~\ref{thm:Men21} for the set $\tilde{T}$. The result follows thanks to the straightforward observations that $\ell_*(\tilde{T})$ is equivalent to  $\ell_*(T^\prime)+\ell_*(\Sigma_{k,m})$; $\ell_*(\Sigma_{k,m}) \sim \sqrt{k\log(em/k)}$; and $\blue{{\cal R}(\tilde{T})=}\sup_{t \in \tilde{T}} \|t\|_2 \sim 1$.
\endproof

\begin{Remark} \label{rem:useful-presentation}
It is useful to present the estimate from Theorem~\ref{thm:new} in terms of the following parameter. For a set $T \subset \R^n$, $\red{R\geq  {\cal R}(T)}$, and $\red{1\leq k\leq m}$ define
\begin{equation}
\label{eqn:QkTdef}
\red{Q_k(T,R)=\rho\left(\ell_*^2(T)+R^2 k \log({em}/{k}) \right)^{1/2}}.
\end{equation}
Theorem \ref{thm:new} implies that with probability at least $1-2\exp(-c_0u^2[\red{\ell_*^2}(T)+\red{R^2}k\log(em/k)]/\red{R^2})$
\begin{equation}
\label{eqn:useful-presentation}
\sup_{\blue{x} \in T} \|BD_\eps x\|_{[k]} \leq c_1u^2 \max\{Q_k(T\red{,R}),\red{R}^{-1} Q_k^2(T\red{,R})\}.
\end{equation}
In particular, if $\red{Q_k(T,R) \leq R}$, as will be the case in the situations that interest us, the dominating term in the estimate from Theorem~\ref{thm:new} is $\sim u^2 Q_k(T\red{,R})$.
\end{Remark}

\subsection{Strong regularity and Theorem \ref{thm:mainGeneric}}
Let us return to the last two conditions that are required in the \blue{generic} embedding result from Theorem \ref{thm:mainGeneric}. If $T_\theta$ is a $\theta$-\red{net} of $T$ of minimal cardinality and $\red{k}=\lfloor \delta m/\lambda \rfloor$, one has to show that  
\begin{equation}
\label{eqn:knormbiasRepeat}
\sup_{x \in T_\theta} \|Ax\|_{[\red{k}]} \leq \lambda \sqrt{\red{k}}
\end{equation}
and
\begin{equation}
\label{eqn:knormoscillationsRepeat}
\sup_{x \in (T-T) \cap \theta B_2^n} \|Ax\|_{[\red{k}]} \leq \delta \sqrt{\red{k}}
\end{equation}
hold for the matrix
$$
A=R_I \Gamma_G D_{\eps^{\prime \prime}} \frac{1}{\sqrt{n}} \Gamma_{\eps^\prime} D_\eps.
$$
Set
$$
B= \frac{1}{\sqrt{\nrows}} R_I \Gamma_G D_{\eps^{\prime \prime}} \frac{1}{\sqrt{n}} \Gamma_{\eps^\prime}
$$
and observe that $A = \sqrt{\nrows} B D_\eps$. We will spend considerable effort in showing that $B$ is $\red{\rho}$-strongly regular, where $\red{\Upsilon:=}\rho \nrows^{1/2}$ is independent of $\nrows$ and is at most poly-logarithmic in $n$ \red{(see Theorem~\ref{thm:regularity-of-B})}. Before going down that long road, let us first show its benefit: \blue{combined with the following result, it will establish \eqref{eqn:knormbiasRepeat} and \eqref{eqn:knormoscillationsRepeat}.}   

\begin{tcolorbox}
\begin{Theorem} \label{thm:ugly-est}
For $u \geq 1$ there exist constants $c_1,c_2$ that depend only \blue{(polynomially)} on $u$ and absolute constants $\blue{c_3,}c_4$ such that the following holds. \blue{Let $0<\rho<\frac{1}{\sqrt{\log(m+n)}}$ be such that $\Upsilon= \rho \nrows^{1/2}  \geq 1$.} \blue{Let $B\in \R^{m\times n}$ be $\red{\rho}$-strongly regular,} set $0<\delta \leq {\cal R}(T)$ and consider
$$
\lambda\geq c_1\max\left\{\red{\delta}\Upsilon^2  \log(e \Upsilon), \Upsilon {\cal R}(T)\log^{1/2}\left(\frac{\Upsilon {\cal R}(T)}{\delta}\right) \right\}.
$$
Put
$$
0<\theta \leq \frac{\delta}{\blue{c_2}\Upsilon} \log^{-1/2}\left(\frac{\lambda}{\delta}\right).
$$
If
\begin{equation}
\label{eqn:ugly-estNumrows}
\nrows \geq c_3 \frac{\lambda}{\log\left(\frac{e\lambda}{\delta}\right)} \max\left\{ \frac{\red{\ell_*^2}(T_{\theta})}{\delta\red{{\cal R}^2(T)}},\frac{\red{\ell_*^2}((T-T)\cap \theta B_2^n)}{\delta\red{\theta^2}} \right\},
\end{equation}
then with probability at least $1-2\exp(-c_4u^2\red{m\delta/\lambda})$, \blue{the matrix $A=\sqrt{m}BD_{\eps}$ satisfies \eqref{eqn:knormbiasRepeat} and \eqref{eqn:knormoscillationsRepeat}.}
\end{Theorem}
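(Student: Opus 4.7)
Since $A=\sqrt{m}BD_\eps$, the targets \eqref{eqn:knormbiasRepeat} and \eqref{eqn:knormoscillationsRepeat} are equivalent to the bounds
\begin{equation*}
\sup_{x\in T_\theta}\|BD_\eps x\|_{[k]}\le \lambda\sqrt{k/m} \quad\text{and}\quad \sup_{x\in (T-T)\cap\theta B_2^n}\|BD_\eps x\|_{[k]}\le \delta\sqrt{k/m}.
\end{equation*}
Since $B$ is $\rho$-strongly regular with $\rho<1/\sqrt{\log(m+n)}$, the plan is to apply Theorem~\ref{thm:new} (in the form of Remark~\ref{rem:useful-presentation}) to each of these two sets in turn, with $R={\cal R}(T)$ in the first case and $R=\theta$ in the second. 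In each application we aim to stay in the regime $Q_k(S,R)\le R$, so that \eqref{eqn:useful-presentation} reduces to the cleaner linear bound $\sup_{x\in S}\|BD_\eps x\|_{[k]}\lesssim u^2 Q_k(S,R)$. Recalling that $Q_k^2(S,R)=\rho^2\ell_*^2(S)+\rho^2 R^2 k\log(em/k)$, this linear bound splits into an $\ell_*^2$-contribution and a $k\log(em/k)$-contribution, to be absorbed respectively into the $m$-hypothesis and the $\lambda$/$\theta$-hypothesis.

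\textbf{The two estimates.} Since $k=\lfloor\delta m/\lambda\rfloor$, one has $\log(em/k)\sim\log(e\lambda/\delta)$, and the identity $\sqrt{m}\rho=\Upsilon$ will be used throughout. On $T_\theta$ with $R={\cal R}(T)$, the $k\log(em/k)$-contribution to the target requires $u^2\Upsilon{\cal R}(T)\sqrt{\log(e\lambda/\delta)}\lesssim \lambda$, which follows from the second term of the $\lambda$-hypothesis after a routine unravelling of the self-referential logarithm. The $\ell_*^2(T_\theta)$-contribution requires $u^4\Upsilon^2\ell_*^2(T_\theta)\lesssim \lambda^2 k/m\sim \lambda\delta$, i.e.\ $m\gtrsim u^4\Upsilon^2\ell_*^2(T_\theta)/(\lambda\delta)$; inserting the $\lambda$-hypothesis in the form $\Upsilon^2\lesssim \lambda^2/({\cal R}^2(T)\log(e\lambda/\delta))$ converts this into the first term of \eqref{eqn:ugly-estNumrows}. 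The condition $Q_k\le R$ itself is guaranteed by the first term of the $\lambda$-hypothesis: $\lambda\gtrsim \delta\Upsilon^2\log(e\Upsilon)$ forces $\rho^2 k\log(em/k)\ll 1$, while the bound $\rho^2\ell_*^2(T_\theta)\le R^2$ is inherited from the $m$-hypothesis. The analysis on $(T-T)\cap\theta B_2^n$ is entirely parallel, with the role previously played by the second term of the $\lambda$-hypothesis now played by the $\theta$-hypothesis $\theta\le \delta/(c_2\Upsilon\log^{1/2}(\lambda/\delta))$: the latter both forces $Q_k((T-T)\cap\theta B_2^n,\theta)\le\theta$ and absorbs the $k\log(em/k)$-part of the target, while the $\ell_*^2$-part, after substitution of the $\theta$-hypothesis, becomes the second term of \eqref{eqn:ugly-estNumrows}.

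\textbf{Probability and main obstacle.} Each application of Theorem~\ref{thm:new} succeeds with probability at least $1-2\exp(-c u^2 k\log(em/k))$, and $k\log(em/k)\ge k\ge \delta m/(2\lambda)$; a union bound over the two events yields the claimed probability $1-2\exp(-c_4 u^2 m\delta/\lambda)$. The proof is thus a routine unpacking of Remark~\ref{rem:useful-presentation}, and the principal difficulty is one of bookkeeping: one must verify that $Q_k\le R$ holds in both applications (so that the quadratic branch of \eqref{eqn:useful-presentation} is never activated), and carefully juggle the self-referential logarithms $\log(e\lambda/\delta)$, $\log(e\Upsilon)$, and $\log(\Upsilon{\cal R}(T)/\delta)$ so that the constants $c_1,c_2$ can be chosen to depend only polynomially on $u$ (and the constants $c_3,c_4$ absolutely).
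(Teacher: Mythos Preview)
Your proposal is correct and follows essentially the same approach as the paper: apply Theorem~\ref{thm:new} (via Remark~\ref{rem:useful-presentation}) to $T_\theta$ with $R={\cal R}(T)$ and to $(T-T)\cap\theta B_2^n$ with $R=\theta$, verify $Q_k\le R$ in each case so that the linear branch of \eqref{eqn:useful-presentation} dominates, and then reduce the target bounds to the stated conditions on $\lambda$, $\theta$, and $m$.

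The only difference is organizational. The paper first uses the $m$-hypothesis \eqref{eqn:ugly-estNumrows} to observe that $\ell_*^2(T_\theta)/{\cal R}^2(T)$ and $\ell_*^2(K_\theta)/\theta^2$ are each bounded by $\tfrac{\delta}{\lambda}m\log(e\lambda/\delta)\sim k\log(em/k)$, so that $Q_k(S,R)\sim \rho R\sqrt{k\log(em/k)}$ in both cases; the $\ell_*^2$-contribution is thus absorbed \emph{before} verifying $Q_k\le R$ and before comparing with the targets. This makes the verification of $Q_k\le R$ a single condition, $\Upsilon\sqrt{(\delta/\lambda)\log(e\lambda/\delta)}\lesssim 1$, which is exactly the first term of the $\lambda$-hypothesis. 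You instead keep the two contributions to $Q_k$ separate and absorb the $\ell_*^2$-part \emph{after} substituting the $\lambda$- (resp.\ $\theta$-) hypothesis back into the resulting $m$-condition; this is equivalent but slightly more circuitous, and your claim that ``the $\theta$-hypothesis\ldots forces $Q_k((T-T)\cap\theta B_2^n,\theta)\le\theta$'' is imprecise --- that step also requires the $m$-hypothesis together with the first term of the $\lambda$-hypothesis, as you correctly note in the $T_\theta$ case. Either route yields the same constants dependence (polynomial in $u$ for $c_1,c_2$; absolute for $c_3,c_4$).
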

\end{tcolorbox}

\begin{Remark}
To put the estimate \blue{\eqref{eqn:ugly-estNumrows}} in Theorem~\ref{thm:ugly-est} in a more familiar form, observe that 
$$
\red{\ell_*(T_\theta)\lesssim {\cal R}(T_\theta)\log |T_\theta| \lesssim {\cal R}(T)\log {\cal N}(T,\theta).}
$$
Hence, by choosing $\theta$ as large as possible, the \red{second term on the right hand side of \eqref{eqn:ugly-estNumrows}} leads to the $\ell_*^2/\delta^3$ term \blue{in \eqref{eqn:DoubleCirculantEmbeddingTextNumRows}} from Theorem \ref{thm:DoubleCirculantEmbeddingText}---up to poly-logarithmic factors in $n$, while the \red{first term} is dominated by the entropic \blue{term} \blue{in \eqref{eqn:DoubleCirculantEmbeddingTextNumRows}}.
\end{Remark}

\proof \blue{Set $\red{k}=\lfloor \delta m/\lambda \rfloor$ and} \red{define $K_{\theta}=(T-T) \cap \theta B_2^n$. We will apply the estimate in Remark~\ref{rem:useful-presentation} for the sets $T_{\theta}$ and $K_{\theta}$. To that end, observe that ${\cal R}(T_{\theta})\leq {\cal R}(T)$ and ${\cal R}(K_{\theta})\leq \theta$. Let use write $\red{Q(T_{\theta}) := Q_k(T_{\theta},{\cal R}(T))}$ and $\red{Q(K_\theta) := Q_k(T_{\theta},\theta)}$ and observe that
$$
\red{Q(T_{\theta}) \sim \rho  {\cal R}(T)\left(\frac{\ell_*^2(T_{\theta})}{ {\cal R}^2(T)}+ \frac{\delta}{\lambda} \nrows \log\left(\frac{e\lambda}{\delta}\right) \right)^{1/2}}
$$
and 
$$
\red{Q(K_\theta) \sim \rho \theta\left(\frac{\ell_*^2(K_{\theta})}{\theta^2}+ \frac{\delta}{\lambda} \nrows \log\left(\frac{e\lambda}{\delta}\right) \right)^{1/2}.}
$$
We begin by imposing conditions that ensure that 
\begin{equation}
\label{eqn:domCondRem}
Q(T_{\theta}) \leq {\cal R}(T), \ \ \ {\rm and} \ \ \ Q(K_\theta) \leq \theta,
\end{equation}
so that both $Q(T_{\theta})$ and $Q(K_{\theta})$ are the dominant terms in the estimate \blue{\eqref{eqn:useful-presentation}} for the sets $T_{\theta}$ and $K_{\theta}$, respectively. Observe that \eqref{eqn:ugly-estNumrows} implies that
$$
\max\left\{ \frac{\ell_*^2(T_{\theta})}{{\cal R}^2(T)} , \frac{\ell_*^2(K_{\theta})}{\theta^2}\right\} \leq \frac{\delta}{\lambda} \nrows \log\left(\frac{e\lambda}{\delta}\right),
$$
and hence \eqref{eqn:domCondRem} holds if we ensure that 
$$
\rho \sqrt{\nrows} \cdot \left(\frac{\blue{\delta}}{\blue{\lambda}}  \log\left(\frac{e\blue{\lambda}}{\blue{\delta}}\right) \right)^{1/2} \leq \frac{1}{4}.
$$
Since $\Upsilon = \rho \sqrt{\nrows}$, the latter condition is satisfied if}
\begin{equation} \label{eq:cond-lambda-delta-1}
\frac{\lambda}{\delta} \geq c_1 \Upsilon^2  \log(e \Upsilon).
\end{equation}
\blue{To summarize, if \eqref{eqn:ugly-estNumrows} and \eqref{eq:cond-lambda-delta-1} hold, then Remark~\ref{rem:useful-presentation} implies that
$$\sup_{x\in T_{\theta}} \|Ax\|_{[k]} = \sup_{x\in T_{\theta}} \sqrt{m}\|BD_{\eps} x\|_{[k]}\lesssim u^2\sqrt{m} Q(T_{\theta})$$
and
$$\sup_{x\in K_{\theta}} \|Ax\|_{[k]} \lesssim u^2\sqrt{m} Q(K_{\theta}).$$
Thus, to establish \eqref{eqn:knormbiasRepeat} and \eqref{eqn:knormoscillationsRepeat},} all that remains is to show that
$$
\blue{u^2}\sqrt{\nrows}Q(T_{\theta})\leq \lambda\sqrt{\red{k}} \ \ {\rm and} \ \  \blue{u^2}\sqrt{\nrows}Q(K_{\theta})\leq \delta\sqrt{\red{k}},
$$
i.e., that
\begin{equation} \label{eq:cond-Q-1}
\blue{u^2}\Upsilon {\cal R}(T) \left(\frac{\delta}{\lambda} \log \left(\frac{e \lambda}{\delta} \right) \right)^{1/2} \leq \sqrt{\delta} \sqrt{\lambda}
\end{equation}
and that
\begin{equation} \label{eq:cond-Q-2}
\blue{u^2}\Upsilon \theta \left(\frac{\delta}{\lambda} \log \left(\frac{e \lambda}{\delta} \right) \right)^{1/2} \leq \delta \sqrt{\frac{\delta}{\lambda}}.
\end{equation}
A straightforward computation shows that \eqref{eq:cond-Q-1} holds if
\begin{equation} \label{eq:cond-lambda-delta-2}
\lambda\geq \blue{c_1(u)} \Upsilon {\cal R}(T) \log^{1/2}\left(\frac{\Upsilon {\cal R}(T)}{\delta}\right),
\end{equation}
and \eqref{eq:cond-Q-2} holds if
\begin{equation} \label{eq:cond-theta-1}
\theta \leq \frac{\delta}{\blue{c_2(u)}\Upsilon} \log^{-1/2}\left(\frac{\blue{e}\lambda}{\delta}\right).
\end{equation}
This completes the proof. 
\endproof

\section{Strong regularity features of the double circulant matrix} \label{sec:reg-double-circulant}
\blue{Recall that} the double circulant matrix is
$$
A=R_I \Gamma_G D_{\eps^{\prime \prime}} \frac{1}{\sqrt{n}}\Gamma_{\eps^\prime} D_\eps = \sqrt{m} B D_{\eps},
$$
where $I \subset  \{1,...,n\}$ is a fixed set of indices of cardinality $m$, $G$ is the standard gaussian random vector in $\R^n$ and $\eps$, $\eps^{\prime}$ and $\eps^{\prime \prime}$ are uniformly distributed in $\{-1,1\}^n$. Moreover, $G$, $\eps$,  $\eps^{\prime}$ and $\eps^{\prime \prime}$ are all independent. Also, for every $x \in \R^n$, we have that
$$
\Gamma_G x = G \circledast x = \Gamma_x G
$$
is the discrete circular convolution of $G$ and $x$. And, denoting by ${\cal F}\in \blue{\C}^{n\times n}$ the discrete Fourier matrix, we have that  $\Gamma_ x G= \sqrt{n}UD_{Wx}OG$, where
\begin{equation}
\label{eqn:OWUDef}
O=W=\frac{{\cal F}}{\sqrt{n}}, \ \ {\rm and} \ \ U=\frac{{\cal F}^{-1}}{\sqrt{n}}=W^*.
\end{equation}
In particular, $U$, $W$ and $O$ are \emph{Hadamard-type matrices}, i.e., they are unitary and all their entries are bounded by $\frac{1}{\sqrt{n}}$.

\vskip0.3cm
In light of Theorem \ref{thm:mainGeneric}, a key part of the analysis of the embedding procedure is to show that $A$ maps an arbitrary set $T$ into `well-spread vectors', specifically, that \eqref{eqn:knormbiasRepeat} and \eqref{eqn:knormoscillationsRepeat} hold. By invoking \blue{Theorem~\ref{thm:ugly-est}}, that can be established by proving that $B$ \blue{possesses the following strong regularity property:}
\begin{tcolorbox}
\begin{Theorem} \label{thm:regularity-of-B}
There is an absolute constant $c_0$ and for $\gamma \geq 1$ \blue{there are constants $c_1$ and $c_2$} \red{that depend only (polynomially) on $\gamma$} such that the following holds. \blue{If $\nrows \leq n/(\red{c_1}\log^{\red{4}} n)$}, then with probability at least $1-2\exp(-c_0\gamma \log^2 n)$ with respect to $G \otimes \eps^\prime \otimes \eps^{\prime \prime}$, the random matrix $B$ is $\red{\rho}$\blue{-}strongly regular for
$$
\rho=\red{c_2}\frac{\log^{\red{5/2}} n}{\sqrt{\nrows}}.
$$
\end{Theorem}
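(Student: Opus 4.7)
The plan is to verify the two defining conditions of strong regularity --- restricted isometry and the well-spread bound $\|Bx\|_{[r]}\leq \rho \sqrt r$ --- for every sparsity level $1 \leq r \leq s_\rho \sim m/\log^5 n$, and then to take a union bound. Both conditions will be cast as suprema of chaos processes driven by the Gaussian vector $G$ (after conditioning on $\eps'$ and $\eps''$), and are handled by the chaining machinery of \cite{KMR14}, combined with the refined partial-matrix techniques from \cite{DiM18b}.

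Writing $B = CM$ with $C = \frac{1}{\sqrt{m}} R_I \Gamma_G D_{\eps''}$ and $M = \frac{1}{\sqrt{n}}\Gamma_{\eps'}$, the RIP condition asks for control of $\sup_{x \in U_r} |\|Bx\|_2^2 - \|x\|_2^2|$, a quadratic form in $G$. The well-spread condition is rewritten as a bilinear chaos
$$\sup_{x \in U_r}\sup_{y \in \Sigma_{r,m}} |\langle y, Bx\rangle| = \frac{1}{\sqrt m}\sup_{x,y} |\langle R_I^{*} y, \Gamma_{D_{\eps''}Mx} G\rangle|,$$
using the identity $\Gamma_G z = \Gamma_z G$ with $z = D_{\eps''} Mx$. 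In both cases the matrix-valued coefficients depend on the pre-processed vector $D_{\eps''} Mx$, and the analysis is driven by its spectral behavior.

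The core observation underlying both estimates is that the pre-processor $D_{\eps''} M$ acts as an Ailon--Chazelle-type flattening: for any fixed $r$-sparse $x$, the discrete Fourier transform $\mathcal{F}(D_{\eps''}Mx)$ has entries of magnitude at most $O(\sqrt{\log n / n})\,\|x\|_2$ with high probability in $(\eps', \eps'')$, uniformly over a fine net of $U_r$. This Fourier-flatness is exactly the hypothesis under which the KMR14 chaining bound for partial random circulant matrices is effective: applied to the outer map $\frac{1}{\sqrt m}R_I\Gamma_G$ conditionally on a good realization of $(\eps', \eps'')$, it produces the desired RIP estimate $\sup_{x\in U_r}\bigl|\|Bx\|_2^2 - \|x\|_2^2\bigr|\lesssim \sqrt{r}\cdot \log^{5/2}(n)/\sqrt m$. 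For the well-spread bound the chaining proceeds in two layers: once over $x \in \Sigma_{r,n}$ through the Fourier-flat property, and once over the row-subset direction $y\in \Sigma_{r,m}$. This second layer is where the arguments of \cite{DiM18b} are needed, since it amounts to controlling the $\gamma_2$-functional of a further row-subsampling of the partial circulant matrix. A union bound over $1\leq r\leq s_\rho$ and over all subsets $J\subset [m]$ with $|J|=r$ costs only a $\log n$ factor in the failure probability, which is absorbed into the stated bound $1-2\exp(-c_0\gamma\log^2 n)$.

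The main obstacle is the well-spread estimate. Unlike the RIP step, which reduces cleanly to a single application of KMR14 after the Fourier-flattening lemma, the well-spread bound requires uniform control over all $r$-element row subsets $J \subset I$, effectively turning the partial circulant $R_I \Gamma_G$ into a much smaller partial circulant $R_J \Gamma_G$ for every $J$. The chaining must simultaneously exploit the Fourier-flatness of $D_{\eps''}Mx$ and the sparsity of $y$, keeping all logarithmic losses coordinated so that the final $\rho$ comes out as $\log^{5/2}(n)/\sqrt m$, and so that the required regime $m\lesssim n/\log^4 n$ emerges naturally from the probability estimate for the flattening step. Once these two chaos bounds are established, the theorem follows by combining them via Definition~\ref{def:regular} and unionizing over $r$.
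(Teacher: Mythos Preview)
Your overall decomposition matches the paper's: split into the RIP and the well-spread estimate, condition on a good $(\eps',\eps'')$-event, and then run a chaos argument in $G$ using the tools of \cite{KMR14} and \cite{DiM18b}. But two of your specific claims are off, and one is a genuine gap.

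First, the flattening statement. The bound $\|\mathcal{F}(D_{\eps''}Mx)\|_\infty = O(\sqrt{\log n/n})\|x\|_2$ holds for a single fixed $x$ with high probability in $\eps''$, but it does \emph{not} hold uniformly over a net of $U_r$: a union bound over $\sim(en/r)^r$ net points forces an extra $\sqrt{r}$, yielding only $O(\sqrt{r\log n/n})$. The paper sidesteps this entirely by conditioning on the single $x$-independent entrywise event $\max_{i,j}\sqrt{n}|\Psi_{ij}|\lesssim\sqrt{\log n}$ and then estimating $\gamma_2(\{R_I\Gamma_{\Psi x}:x\in\Sigma_{r,n}\},\|\cdot\|_{2\to 2})$ via Maurey's lemma combined with a volumetric bound; this is what produces $\gamma_2\lesssim\sqrt{r}\log^{5/2}n$. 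Your outline does not mention Maurey's lemma, and without it the $\gamma_2$ estimate does not close.

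Second, and more seriously, your claim that a union bound over all row subsets $J\subset[m]$ with $|J|=r$ ``costs only a $\log n$ factor'' is wrong. There are $\binom{m}{r}$ such subsets, so the cost in the exponent is $r\log(em/r)$; since $r$ ranges up to $s_\rho\sim m/\log^5 n$, this is of order $m$ --- far too large to be absorbed into $\exp(-c\gamma\log^2 n)$. The paper takes no union bound over $J$: it drops $R_I$ via $\|R_I v\|_{[r]}\leq\|v\|_{[r]}$, rewrites $\|\Gamma_{\Psi x}G\|_{[r]}=\sqrt{n}\,\|UD_{W\Psi x}OG\|_{[r]}$, and invokes Theorem~\ref{thm:s-normEstimates-sparse} (the \cite{DiM18b} result) as a black box with $\mathcal{U}=U$, $\mathcal{W}=W\Psi$, $\mathcal{O}=O$, $\xi=G$. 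That theorem already handles the supremum over $\Sigma_{r,m}$ internally via chaining. Its hypotheses require $d_{\mathcal{W}}=\sqrt{n}\max_{i,j}|(W\Psi)_{ij}|\lesssim\sqrt{\log n}$ (another entrywise event in $\eps''$) and $\sup_{x\in\Sigma_{r,n}}\|W\Psi x\|_2\leq 2$, which follows from the regularity of $n^{-1/2}\Gamma_{\eps'}$ and is exactly where the condition $m\lesssim n/\log^4 n$ enters. Your two-layer chaining is morally what underlies Theorem~\ref{thm:s-normEstimates-sparse}, but the union-bound shortcut you describe does not work.
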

\end{tcolorbox}
Note \red{that} $\blue{\Upsilon=}\rho\sqrt{m}$ is poly-logarithmic in $n$, as required.\par
The idea behind the proof of Theorem~\ref{thm:regularity-of-B} is outlined in the next section. The \red{full} proof is presented in \blue{Appendix}~\ref{sec:strong-regularity-double-circ-proof}.

\subsection{Highlights of the proof of Theorem~\ref{thm:regularity-of-B}}
The \blue{proof is} based on two well-known facts that are formulated in what follows. The first fact is an outcome of \cite{KMR14}, on the behavior of second-order chaos processes (see also Theorem~6.5 in \cite{Dir15} for the refinement that is used here). The bound is based on Talagrand's $\gamma_2$-functional. For a detailed exposition on chaining methods and the $\gamma$-functionals in a general setup, we refer the reader to \cite{Tal14}.

\begin{Definition} \label{def:gamma-2}
Let ${\cal A}$ be a subset of a normed space. An admissible sequence of ${\cal A}$ is a collection of sets ${\cal A}_{\red{\ell}} \subset {\cal A}$, where $|{\cal A}_0|=1$ and for $\ell \geq 1$, $|{\cal A}_{\red{\ell}}| \leq 2^{2^{\red{\ell}}}$. For $a \in {\cal A}$, let $\pi_{\red{\ell}} a \in {\cal A}_{\ell}$ be a nearest point to $a$ in ${\cal A}_{\red{\ell}}$ with respect to the underlying norm. Define
$$
\gamma_2({\cal A},\| \cdot \|) = \inf \sup_{a \in {\cal A}} \left( \|\pi_0 a\| + \sum_{{\red{\ell}} \geq 1} 2^{{\red{\ell}}/2}\|\pi_{{\red{\ell}}}a -\pi_{{\red{\ell}}-1}a\| \right),
$$
where the infimum is taken with respect to all admissible sequences of ${\cal A}$.
\end{Definition}
In the setup we focus on here, ${\cal A}$ is a class of matrices. Denote by $\| \cdot \|_{2 \to 2}$ the standard operator norm, let $\| \cdot \|_{HS}$ be the Hilbert-Schmidt norm and put
$$
d_{HS}({\cal A})=\sup_{A \in {\cal A}} \|A\|_{HS}, \ \ \  \ \ \ d_{2 \to 2}({\cal A}) = \sup_{A \in {\cal A}} \|A\|_{2 \to 2}.
$$
Let $\gamma_2({\cal A}) \equiv \gamma_2({\cal A},\| \cdot \|_{2 \to 2})$ be the $\gamma_2$-functional of ${\cal A}$ with respect to the operator norm.

\vskip0.3cm
Recall that a \red{centered} random variable $\xi$ is $L$-subgaussian if for every $p \geq 2$, $\|\xi\|_{L_p} \leq L\sqrt{p} \|\xi\|_{L_2}$. A random vector $X$ is $L$-subgaussian if it is \red{centered} and for any $t \in \R^n$, $\inr{X,t}$ is an $L$-subgaussian random variable.
\begin{Theorem} \label{thm:KMR14}
Let $\xi$ be a random vector whose coordinates $(\xi_i)_{i=1}^n$ are independent, mean-zero, variance $1$ random variables that are also $L$-subgaussian. Then for any $u \geq 1$, with probability at least $1-2\exp(-u)$,
$$
\sup_{A \in {\cal A}} \left| \ \|A\xi\|_2^2 - \E \|A\xi\|_2^2 \ \right| \leq C(L) \left(\gamma_2^2({\cal A}) + d_{HS}({\cal A})\gamma_2({\cal A}) + \sqrt{u} d_{HS}({\cal A})d_{2\to 2}({\cal A})+ud^2_{2\to 2} ({\cal A}) \right)
$$
for a constant $C\red{(L)}$ that depends only on $L$.
\end{Theorem}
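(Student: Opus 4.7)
My plan is to view $\{X_A : A \in {\cal A}\}$, where $X_A = \|A\xi\|_2^2 - \E\|A\xi\|_2^2 = \xi^T(A^TA)\xi - \E[\xi^T(A^TA)\xi]$, as a centered quadratic process indexed by ${\cal A}$, and to extend a pointwise Hanson--Wright tail to the supremum via generic chaining for processes with Bernstein-type (mixed subgaussian/subexponential) increments. The classical Hanson--Wright inequality for $L$-subgaussian vectors asserts that for any symmetric matrix $M$,
\[
\bP\bigl(|\xi^T M \xi - \E\xi^T M \xi| > t\bigr) \leq 2\exp\!\left(-c(L)\min\!\left(\frac{t^2}{\|M\|_{HS}^2},\ \frac{t}{\|M\|_{2\to 2}}\right)\right),
\]
and applying it with $M = A^T A - B^T B$ yields the Bernstein-type increment bound
\[
\bP(|X_A - X_B|>t) \leq 2\exp\!\left(-c(L)\min\!\left(\frac{t^2}{\|A^TA - B^TB\|_{HS}^2},\ \frac{t}{\|A^TA - B^TB\|_{2\to 2}}\right)\right).
\]

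The next step is to express both pseudodistances in terms of a single operator-norm metric on ${\cal A}$. From $A^TA - B^TB = A^T(A-B) + (A-B)^TB$ and submultiplicativity of the operator norm against the Hilbert--Schmidt and operator norms,
\[
\|A^TA - B^TB\|_{HS} \leq 2\,d_{HS}({\cal A})\,\|A-B\|_{2\to 2},\qquad \|A^TA - B^TB\|_{2\to 2} \leq 2\,d_{2\to 2}({\cal A})\,\|A-B\|_{2\to 2}.
\]
Thus both branches of the Bernstein tail are driven by the same metric $\|\cdot\|_{2\to 2}$, scaled respectively by $d_{HS}({\cal A})$ and $d_{2\to 2}({\cal A})$. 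I fix an admissible sequence $({\cal A}_\ell)_{\ell\geq 0}$ of ${\cal A}$ that is nearly optimal for $\gamma_2({\cal A},\|\cdot\|_{2\to 2})$, set $\pi_\ell A$ to be a closest point in ${\cal A}_\ell$, and telescope $X_A - X_{\pi_0 A} = \sum_{\ell\geq 1}(X_{\pi_\ell A} - X_{\pi_{\ell-1}A})$. Applying the Bernstein increment bound at each level, tuned so that the $\ell$-th link fails with probability at most $2\exp(-u - 2^\ell)$, and taking a union bound over the $\leq 2^{2^{\ell+1}}$ pairs $(\pi_\ell A,\pi_{\ell-1}A)$, yields with probability at least $1-2\exp(-u)$ the chain bound
\[
\sup_{A\in{\cal A}}|X_A - X_{\pi_0 A}| \leq C\sum_{\ell\geq 1}\Bigl(d_{HS}({\cal A})\sqrt{u+2^\ell}\,\|\Delta_\ell\|_{2\to 2} + d_{2\to 2}({\cal A})(u+2^\ell)\|\Delta_\ell\|_{2\to 2}\Bigr),
\]
where $\Delta_\ell = \pi_\ell A - \pi_{\ell-1}A$; the four resulting sums are meant to produce the four terms of the claimed estimate.

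The main obstacle is to extract $\gamma_2^2({\cal A})$ rather than $\gamma_1({\cal A},\|\cdot\|_{2\to 2})$ from the subexponential, $u$-independent piece $d_{2\to 2}({\cal A})\sum_\ell 2^\ell\|\Delta_\ell\|_{2\to 2}$, which na\"ively equals $d_{2\to 2}({\cal A})\,\gamma_1({\cal A},\|\cdot\|_{2\to 2})$ and can be much larger than $\gamma_2^2({\cal A})$. The key refinement is to use the improved decomposition $A^TA - B^TB = (A-B)^T(A-B) + (A-B)^TB + B^T(A-B)$, which gives the \emph{quadratic} bound $\|A^TA - B^TB\|_{2\to 2} \leq \|A-B\|_{2\to 2}^2 + 2d_{2\to 2}({\cal A})\|A-B\|_{2\to 2}$, and to peel the chain at a per-$A$ crossover level $\ell^*(A)$ where the two branches of the Bernstein tail balance: for $\ell \leq \ell^*(A)$ the subgaussian branch is used and the sum collapses to $d_{HS}({\cal A})\gamma_2({\cal A})$, while for $\ell > \ell^*(A)$ the subexponential branch is used with the quadratic term. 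The Cauchy--Schwarz-type identity
\[
\sum_{\ell}2^\ell\|\Delta_\ell\|_{2\to 2}^2 = \sum_\ell\bigl(2^{\ell/2}\|\Delta_\ell\|_{2\to 2}\bigr)^2 \leq \Bigl(\sup_\ell 2^{\ell/2}\|\Delta_\ell\|_{2\to 2}\Bigr)\Bigl(\sum_\ell 2^{\ell/2}\|\Delta_\ell\|_{2\to 2}\Bigr)\leq \gamma_2^2({\cal A})
\]
then collapses the post-$\ell^*$ piece into exactly $\gamma_2^2({\cal A})$, while the two tail contributions $\sqrt{u}\,d_{HS}({\cal A})d_{2\to 2}({\cal A})$ and $u\,d_{2\to 2}^2({\cal A})$ come from the diameter parts, bounded by $d_{2\to 2}({\cal A})$. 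Realizing this two-regime chaining uniformly in $A$ is the delicate technical heart of the proof; this is precisely what the Bernstein-type generic chaining framework of \cite{Dir15} is designed to deliver.
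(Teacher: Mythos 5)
First, a point of comparison: the paper does not prove this statement at all. It is quoted from the literature -- it is the main chaos bound of \cite{KMR14} in the refined form of Theorem~6.5 of \cite{Dir15} -- so your attempt is really being measured against those external proofs.

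Your setup is correct (Hanson--Wright for the increments, $\|A^TA-B^TB\|_{HS}\le 2d_{HS}({\cal A})\|A-B\|_{2\to2}$, $\|A^TA-B^TB\|_{2\to2}\le 2d_{2\to2}({\cal A})\|A-B\|_{2\to2}$), and you correctly identify the central obstacle: a direct mixed-tail chaining of $X_A=\xi^TA^TA\xi-\E\,\xi^TA^TA\xi$ produces $d_{2\to2}({\cal A})\,\gamma_1({\cal A},\|\cdot\|_{2\to2})$ from the subexponential branch, which is not controlled by the right-hand side of the theorem. However, the fix you propose does not close this gap. The refined decomposition gives $\|A^TA-B^TB\|_{2\to2}\le\|A-B\|_{2\to2}^2+2d_{2\to2}({\cal A})\|A-B\|_{2\to2}$, and while the purely quadratic part indeed sums to $\gamma_2^2({\cal A})$ via your $\ell_1$--$\ell_\infty$ estimate, the linear cross term $2d_{2\to2}({\cal A})\|\Delta_\ell\|_{2\to2}$ remains in the subexponential metric at every level, and its contribution $\sum_\ell 2^\ell d_{2\to2}({\cal A})\|\Delta_\ell\|_{2\to2}$ is exactly $d_{2\to2}({\cal A})\gamma_1({\cal A},\|\cdot\|_{2\to2})$ again. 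The crossover peeling does not remove it: the condition for the subexponential branch to be active is $2^{\ell/2}\bigl(\|\Delta_\ell\|_{2\to2}+2d_{2\to2}({\cal A})\bigr)\gtrsim d_{HS}({\cal A})$, i.e.\ a \emph{lower} bound on $2^{\ell}$, which makes $\sum 2^{\ell}\|\Delta_\ell\|_{2\to2}$ harder to control, not easier; and since $\|\Delta_\ell\|_{2\to2}\to0$ along the chain, the cross term \emph{dominates} the quadratic term precisely at the deep levels where you propose to discard it.

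The known proofs avoid chaining the squared process against Hanson--Wright increments altogether. They exploit the multiplicative structure of the links,
\begin{equation*}
\|\pi_\ell(A)\xi\|_2^2-\|\pi_{\ell-1}(A)\xi\|_2^2=\bigl\langle (\pi_\ell(A)-\pi_{\ell-1}(A))\xi,\ (\pi_\ell(A)+\pi_{\ell-1}(A))\xi\bigr\rangle,
\end{equation*}
bounding each link by $\|(\pi_\ell(A)-\pi_{\ell-1}(A))\xi\|_2\cdot 2\sup_{A,\ell}\|\pi_\ell(A)\xi\|_2$ (or, in \cite{KMR14}, by first decoupling into $\langle A\xi,A\xi'\rangle$ and chaining conditionally on $\xi'$). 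The first factor is chained subgaussianly in the operator norm and yields a $\gamma_2({\cal A})$-type sum; the second is bounded uniformly by $C(\gamma_2({\cal A})+d_{HS}({\cal A})+\sqrt{u}\,d_{2\to2}({\cal A}))$ using the subgaussian concentration of $\|M\xi\|_2$ around $\|M\|_{HS}$ at scale $\|M\|_{2\to2}$. It is this product structure -- not a refined increment metric -- that converts the would-be $\gamma_1$ term into $\gamma_2^2({\cal A})+d_{HS}({\cal A})\gamma_2({\cal A})$. To complete your argument you would need to reproduce this factorization (or the decoupling route), or simply cite Theorem~6.5 of \cite{Dir15} as the paper does.
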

To see why Theorem~\ref{thm:KMR14} is useful for establishing regularity, let us return to the random operator
$$
B=\frac{1}{\sqrt{\nrows}} R_I \Gamma_G D_{\eps^{\prime \prime}} \frac{1}{\sqrt{n}} \Gamma_{\eps^\prime}.
$$
Set
$$
\Psi= D_{\eps^{\prime \prime}} \frac{1}{\sqrt{n}} \Gamma_{\eps^\prime},
$$
\blue{then}, for every $x \in \R^n$,
$$
Bx=\frac{1}{\sqrt{\nrows}} R_I \Gamma_G \Psi x = \frac{1}{\sqrt{\nrows}} R_I \Gamma_{\Psi x} G.
$$
Therefore,
\begin{equation} \label{eq:RIP-B-1}
\sup_{x \in \Sigma_{r,n}} \left| \ \|Bx\|_2^2 - \|x\|_2^2 \ \right| \leq \sup_{x \in \Sigma_{r,n}} \left| \ \|Bx\|_2^2 - \E_G \|Bx\|_2^2 \ \right| + \sup_{x \in \Sigma_{r,n}} \left| \ \E_G \|Bx\|_2^2 - \|x\|_2^2 \ \right|,
\end{equation}
and by a straightforward computation,
\begin{equation} \label{eq:exp-B}
\E_G \|Bx\|_2^2 = \E_G  \left\|\frac{1}{\sqrt{\nrows}} R_I \Gamma_{\Psi x} G\right\|_2^2 = \frac{1}{n} \|\Gamma_{\eps^\prime} x \|_2^2 \blue{= \frac{1}{n} \|\Gamma_{x} \eps^\prime \|_2^2}.
\end{equation}
Hence, \eqref{eq:RIP-B-1} becomes
\begin{align} \label{eq:RIP-B-2}
& \sup_{x \in \Sigma_{r,n}} \left| \ \|Bx\|_2^2 - \|x\|_2^2 \ \right| \nonumber
\\
\leq & \sup_{x \in \Sigma_{r,n}}  \left| \ \left\| \frac{1}{\sqrt{\nrows}}R_I \Gamma_{\Psi x} G \right\|_2^2 - \E_G \left\|\frac{1}{\sqrt{\nrows}} R_I \Gamma_{\Psi x} G \right\|_2^2 \ \right| + \sup_{x \in \Sigma_{r,n}} \left| \ \left\|\frac{1}{\sqrt{n}}\blue{\Gamma_{x} \eps^\prime} \right\|_2^2 - \|x\|_2^2 \ \right|
\end{align}
and
$$
\|x\|_2^2 = \E_{\eps^\prime} \left\|\frac{1}{\sqrt{n}} \Gamma_x \eps^\prime\right\|_2^2.
$$
The two terms in \eqref{eq:RIP-B-2} are exactly in the form that is dealt with in Theorem \ref{thm:KMR14}. Taking into account the regularity estimates we are looking for, the classes of matrices of interest are
\begin{equation} \label{eq:class-for-B-1}
\left\{ \blue{\frac{1}{\sqrt{m}}}R_I \Gamma_{\Psi x} : x \in \Sigma_{r,n} \right\} \ \ {\rm for} \  r \leq \nrows,
\end{equation}
and
\begin{equation} \label{eq:class-for-B-2}
\left\{\blue{\frac{1}{\sqrt{n}}}\Gamma_x : x \in \Sigma_{r,n} \right\} \ \ {\rm for}  \ r \leq n.
\end{equation}

The key estimate in the case of \eqref{eq:class-for-B-2} was established in \cite{KMR14}:
\begin{Theorem} \label{thm:RIP-sparse-simple}
For $L, \gamma \geq 1$ there are constants $c_0$ and $c_1$ \blue{depending only on $L$ and $\gamma$, respectively,} such that the following holds. Let $\xi$ be as in Theorem \ref{thm:KMR14} and set $1 \leq r \leq n$. Then with probability at least $1-2\exp(-c_0\gamma \log^4 n)$,
$$
\sup_{x \in \Sigma_{r,n}} \left| \ \red{\frac{1}{n}}\|\Gamma_x \xi\|_2^2 - \red{\|x\|_2^2} \ \right| \leq \rho \sqrt{r}
$$
for
$$
\rho = c_1(\gamma) \frac{\log^{\red{2}} n}{\sqrt{n}}.
$$
\end{Theorem}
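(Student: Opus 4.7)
\vspace{0.3cm}
\noindent\textbf{Proof proposal for Theorem~\ref{thm:RIP-sparse-simple}.}

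The plan is to express the quantity in question as a chaos process and then invoke Theorem~\ref{thm:KMR14}. Using the identity $\Gamma_x \xi = \Gamma_\xi x$ is tempting, but it is cleaner to keep $\xi$ as the random vector and view $M_x := \tfrac{1}{\sqrt{n}} \Gamma_x$ as a (deterministic) linear operator acting on $\xi$. Then
$$\frac{1}{n}\|\Gamma_x \xi\|_2^2 = \|M_x \xi\|_2^2, \qquad \E \|M_x \xi\|_2^2 = \|x\|_2^2,$$
where the last identity uses that the rows of $\Gamma_x$ are circular shifts of $x$ and that $\xi$ has i.i.d.\ variance $1$ coordinates. Consequently, setting ${\cal A}_r = \{M_x : x \in \Sigma_{r,n}\}$, the quantity to bound is exactly the chaos deviation
$$
\sup_{M \in {\cal A}_r} \left| \ \|M \xi\|_2^2 - \E \|M\xi\|_2^2 \ \right|,
$$
which Theorem~\ref{thm:KMR14} controls in terms of $d_{HS}({\cal A}_r)$, $d_{2\to 2}({\cal A}_r)$, and $\gamma_2({\cal A}_r)$.

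The two diameter estimates are straightforward. Since the rows of $\Gamma_x$ are circular shifts of $x$, $\|M_x\|_{HS}^2 = \|x\|_2^2 \leq 1$, so $d_{HS}({\cal A}_r) \leq 1$. For the operator norm, the eigenvalues of $\Gamma_x$ are the unnormalized Fourier coefficients of $x$, so $\|M_x\|_{2\to 2} = \|W x\|_\infty$ where $W = {\cal F}/\sqrt{n}$ as in \eqref{eqn:OWUDef}. For $x \in \Sigma_{r,n}$ one has $|(Wx)_k| \leq \tfrac{1}{\sqrt{n}}\|x\|_1 \leq \sqrt{r/n}$, giving $d_{2\to 2}({\cal A}_r) \leq \sqrt{r/n}$.

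The core of the argument is to prove
$$
\gamma_2({\cal A}_r, \| \cdot \|_{2 \to 2}) = \gamma_2\bigl(W \Sigma_{r,n}, \| \cdot \|_\infty\bigr) \lesssim \sqrt{\tfrac{r}{n}} \cdot \log^2 n ,
$$
i.e.\ to control the $\gamma_2$-functional of the image of the set of sparse unit vectors under the normalized Fourier matrix $W$, in the $\ell_\infty$ metric. This is the main obstacle and the part that is genuinely nontrivial. The standard route — and the one I would follow — is to combine Maurey's empirical method (which gives sharp covering estimates for $W \Sigma_{r,n}$ in $\ell_\infty$ at small scales, exploiting the $1/\sqrt{n}$ entry-wise bound on $W$) with a Dudley-type chaining bound at large scales, together with a dyadic decomposition matching the two regimes. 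This is precisely the type of analysis carried out for partial random circulant matrices in \cite{KMR14}; the only modification here is that the $R_I$ restriction is absent, which is favorable and yields the claimed polylogarithmic improvement in $n$.

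With those ingredients in hand, I apply Theorem~\ref{thm:KMR14} with $u = c_0(L)\gamma \log^4 n$. The four terms in its right-hand side are bounded as follows: $\gamma_2^2 \lesssim (r/n)\log^4 n \leq \sqrt{r/n}\log^2 n$ provided $r \log^4 n \lesssim n$ (which is automatic in the regime of interest, since otherwise the claimed bound $\rho \sqrt{r}$ is vacuous); $d_{HS}\gamma_2 \lesssim \sqrt{r/n}\log^2 n$; $\sqrt{u}\, d_{HS} d_{2\to 2} \lesssim \sqrt{\gamma}\log^2 n \cdot \sqrt{r/n}$; and $u\, d_{2\to 2}^2 \lesssim \gamma \log^4 n \cdot r/n$, which again sits below $\sqrt{r/n}\log^2 n$ in the same regime. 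Summing up and multiplying by a constant depending on $\gamma$ gives the bound $\rho\sqrt{r}$ with $\rho = c_1(\gamma)\log^2 n /\sqrt{n}$, and the probability of failure is $2\exp(-c_0\gamma \log^4 n)$, exactly as claimed.
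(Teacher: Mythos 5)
Your overall strategy is the right one and is, in essence, the paper's: the paper does not prove this statement directly but attributes it to \cite{KMR14}, and the argument carried out in Appendix~\ref{sec:strong-regularity-double-circ-proof} for the related class $\{R_I\Gamma_{\Psi x}\}$ is exactly the reduction you describe --- rewrite the deviation as a chaos process, bound $d_{HS}$, $d_{2\to2}$ and $\gamma_2$, and apply Theorem~\ref{thm:KMR14} with $u\sim\gamma\log^4 n$. Your computations of $\E\|M_x\xi\|_2^2=\|x\|_2^2$, $d_{HS}(\mathcal{A}_r)\le 1$ and $d_{2\to 2}(\mathcal{A}_r)\le\sqrt{r/n}$ are correct, and $\gamma_2(\mathcal{A}_r)\lesssim\sqrt{r/n}\,\log^2 n$ is the right target. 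One remark on that step, which you only sketch (as does the paper): the two covering estimates enter the entropy integral in roles opposite to what you wrote. Maurey's lemma gives $\log\mathcal{N}(\Sigma_{r,n},\|W\cdot\|_\infty,t)\lesssim r\,t^{-2}n^{-1}\log^2 n$, which is the useful bound at \emph{large} scales, while the volumetric estimate (via $\|W(x-y)\|_\infty\le\sqrt{r/n}\,\|x-y\|_2$ on each fixed support) handles \emph{small} scales. This is a slip in the prose rather than in the mathematics.

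The one genuine gap is the dismissal of the regime $r\log^4 n\gtrsim n$ as ``vacuous''. It is not: the left-hand side is not a priori bounded by a constant. Writing $\frac1n\|\Gamma_x\xi\|_2^2=\sum_k|(Wx)_k|^2|(W\xi)_k|^2$ and taking $x_j=\sqrt{2/n}\cos(2\pi jk_0/n)$ (a unit vector, so admissible for $r=n$) shows that the supremum is at least $\|W\xi\|_\infty^2-1$. Consequently, in that regime your application of Theorem~\ref{thm:KMR14} does not close (both $\gamma_2^2(\mathcal{A}_r)$ and $u\,d_{2\to2}^2(\mathcal{A}_r)\sim\gamma\log^4(n)\,r/n$ exceed $\rho\sqrt{r}$), and in fact no argument can close it: for Gaussian $\xi$ and $r=n$ one has
$$
\PP\left(\|W\xi\|_\infty^2>(c_1(\gamma)+1)\log^2 n\right)\geq \exp\left(-C(\gamma)\log^2 n\right),
$$
which is far larger than $2\exp(-c_0\gamma\log^4 n)$ for large $n$, so the stated probability bound fails there. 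The restriction $r\lesssim n/\log^4 n$ that your argument needs is therefore real and must be stated, not waved away. It is harmless for the paper: $\rho$-regularity (Definition~\ref{def:regular}) only requires RIP$(r,\rho\sqrt{r})$ for $r\le\lceil\rho^{-2}\rceil\sim n/\log^4 n$, and every invocation of the theorem stays in that range. With that caveat made explicit (and the $\gamma_2$ estimate fleshed out along the lines of \cite{KMR14}), your proof is correct.
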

Obtaining a similar estimate for the class \eqref{eq:class-for-B-1} is technically more involved but is based on similar ideas. The details are presented in Appendix \ref{sec:strong-regularity-double-circ-proof}.

\vskip0.3cm
Next, one has to establish the strong regularity of $B$. That is based on the following fact, which is a straightforward generalization of Theorem~3.4 from \cite{DiM18b}.  To formulate the claim, consider two unitary matrices ${\cal O}, {\cal U}\in \C^{n\times n}$, and let ${\cal W}\in \C^{n\times n}$. Set
$$
d_{\cal U} = \sqrt{n} \max_{1 \leq i,j \leq n} |{\cal U}_{ij}| \ \ \ {\rm and} \ \ \ d_{\cal W} = \sqrt{n} \max_{1 \leq i,j \leq n} |{\cal W}_{ij}|,
$$
and assume further that
$$
\sup_{x \in \Sigma_{r,n}} \|{\cal W}x\|_2 \leq 2.
$$
Note, for example, that if ${\cal U}$ and ${\cal W}$ are Hadamard-type matrices then $d_{\cal U}, d_{\cal W} \leq 1$ and the condition on ${\cal W}$ is trivially satisfied.

\begin{Theorem} \label{thm:s-normEstimates-sparse}
For $L \geq 1$ there exist constants $c_1$ and $c_2$ that depend only on $L$ such that the following holds. Let $\zeta$ be an $L$-subgaussian random vector, set $1 \leq r \leq n$, and let $u\geq 1$. Then with probability at least $1-e^{-c_1u}$,
\begin{equation*}
\sup_{x\in \Sigma_{r,n}}\|{\cal U} D_{{\cal W} x} {\cal O} \zeta\|_{[r]} \leq c_2 \sqrt{\frac{r}{n}} \max\{d_{{\cal U}},d_{{\cal W}}\}\left(\log(n)\log(r) +  \sqrt{u}\right).
\end{equation*}
\end{Theorem}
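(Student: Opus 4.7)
The plan is to reformulate the supremum as a single subgaussian process indexed by a subset of $\R^n$, and then apply Talagrand's generic chaining bound, mirroring the approach used to prove Theorem~3.4 in \cite{DiM18b}. Since $\|z\|_{[r]}=\sup_{y\in \Sigma_{r,n}}\langle y,z\rangle$,
\[
\sup_{x\in \Sigma_{r,n}}\|{\cal U}D_{{\cal W}x}{\cal O}\zeta\|_{[r]} = \sup_{(x,y)\in \Sigma_{r,n}\times \Sigma_{r,n}}\bigl|\langle \zeta,\,v(x,y)\rangle\bigr|, \qquad v(x,y):={\cal O}^T D_{{\cal W}x}{\cal U}^T y,
\]
so the quantity of interest is the supremum of a centred $L$-subgaussian process indexed by $V=\{v(x,y):x,y\in \Sigma_{r,n}\}\subset \R^n$. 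The majorizing-measures theorem then yields, with probability at least $1-\exp(-c_1 u)$,
\[
\sup_{v\in V}|\langle \zeta,v\rangle| \leq C(L)\bigl(\gamma_2(V,\|\cdot\|_2)+\sqrt{u}\,d_2(V)\bigr).
\]

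The diameter is the easy ingredient. Using $\|D_a b\|_2\leq \|a\|_\infty\|b\|_2$, the unitarity of ${\cal O}$, the Hadamard-type estimates $\|{\cal W}x\|_\infty\leq d_{\cal W}\sqrt{r/n}$ and $\|{\cal U}^T y\|_\infty\leq d_{\cal U}\sqrt{r/n}$ valid for $x,y\in \Sigma_{r,n}$, and the bounds $\|{\cal W}x\|_2\leq 2$ and $\|{\cal U}^T y\|_2= 1$, I obtain $d_2(V)\lesssim \sqrt{r/n}\,\max\{d_{\cal U},d_{\cal W}\}$. This accounts exactly for the $\sqrt{u}$ term in the statement.

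The real work is the $\gamma_2$-estimate
\[
\gamma_2(V,\|\cdot\|_2) \lesssim \sqrt{r/n}\,\max\{d_{\cal U},d_{\cal W}\}\,\log(n)\log(r).
\]
I would build an admissible sequence on $\Sigma_{r,n}\times \Sigma_{r,n}$ from independent admissible sequences on each factor and control the two increments produced by the decomposition
\[
\|v(x_1,y_1)-v(x_2,y_2)\|_2 \leq \|{\cal W}(x_1-x_2)\|_\infty\|{\cal U}^T y_1\|_2 + \|{\cal W}x_2\|_\infty\|{\cal U}^T(y_1-y_2)\|_2
\]
separately. The point is that at any chaining scale only one of the two sparse differences is actually measured in $\ell_2$; the non-differenced factor always enters through its $\ell_\infty$-norm, which is bounded by $\sqrt{r/n}\,\max\{d_{\cal U},d_{\cal W}\}$ thanks to the Hadamard-type assumption and the sparsity. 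Combining this with the standard entropy bound $\log N(\Sigma_{r,n},\|\cdot\|_2,\epsilon)\lesssim r\log(en/r)+r\log(1/\epsilon)$ at each level, the Dudley-type summation produces only polylogarithmic losses in $n$ and $r$.

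The main obstacle is precisely this step: a naive chaining that replaces the $\ell_\infty$-bound by the operator norm of $y\mapsto D_{{\cal W}x}{\cal U}^T y$ generates an excess factor of $\sqrt{r\log(en/r)}$ and misses the target $\log(n)\log(r)$. Avoiding it requires arranging the two chains in $x$ and $y$ so that at every scale exactly one sparse difference is measured in $\ell_2$ while the other variable delocalizes through its $\ell_\infty$-norm---the same mechanism that underlies Theorem~3.4 of \cite{DiM18b}. Once the $\gamma_2$-bound is in place, adding it to the diameter term from Step~2 delivers the claim.
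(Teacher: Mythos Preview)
Your overall strategy---rewrite the supremum as a subgaussian process, apply generic chaining, and estimate $\gamma_2(V,\|\cdot\|_2)$ via a product chaining following Theorem~3.4 of \cite{DiM18b}---is exactly what the paper has in mind. The diameter bound is fine. The gap is in the way you bound the second increment term.

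In your displayed decomposition you estimate
\[
\|D_{{\cal W}x_2}{\cal U}^*(y_1-y_2)\|_2 \leq \|{\cal W}x_2\|_\infty\,\|{\cal U}^*(y_1-y_2)\|_2 \leq d_{\cal W}\sqrt{r/n}\,\|y_1-y_2\|_2,
\]
and then assert that ``the Dudley-type summation produces only polylogarithmic losses.'' It does not: with this bound the $y$-chain is run in the Euclidean metric, and $\gamma_2(\Sigma_{r,n},\|\cdot\|_2)\sim\sqrt{r\log(en/r)}$, so the contribution is $d_{\cal W}\sqrt{r/n}\cdot\sqrt{r\log(en/r)}$---precisely the extra $\sqrt{r}$ you flagged as the obstacle. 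Your description ``the non-differenced factor enters through its $\ell_\infty$-norm, the difference through $\ell_2$'' is the naive route, not the fix.

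The correct mechanism (and the one in \cite{DiM18b}) swaps the roles in that term:
\[
\|D_{{\cal W}x_2}{\cal U}^*(y_1-y_2)\|_2 \leq \|{\cal W}x_2\|_2\,\|{\cal U}^*(y_1-y_2)\|_\infty \leq 2\,\|{\cal U}^*(y_1-y_2)\|_\infty,
\]
using the standing hypothesis $\sup_{x\in\Sigma_{r,n}}\|{\cal W}x\|_2\leq 2$. Now \emph{both} differences are measured in weighted $\ell_\infty$-seminorms, $\|{\cal W}\,\cdot\,\|_\infty$ and $\|{\cal U}^*\,\cdot\,\|_\infty$, and for each of these one estimates $\gamma_2(\Sigma_{r,n},\|M\,\cdot\,\|_\infty)$ by combining Maurey/Carl-type entropy bounds (as in Lemma~\ref{lemma:covering-app}) with a volumetric estimate at small scales; this is where the $\max\{d_{\cal U},d_{\cal W}\}\sqrt{r/n}\,\log(n)\log(r)$ comes from, with no stray $\sqrt{r}$. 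So the missing idea is simply: put the increments, not the frozen factors, into $\ell_\infty$.
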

\blue{The proof of Theorem~\ref{thm:s-normEstimates-sparse} is based on a straightforward modification of the argument used to prove Theorem~3.4 in \cite{DiM18b}; its details are omitted.}

\vskip0.3cm

Let us return to the random matrix $B$. To establish $\red{\rho}$-strong regularity, one has to estimate
$\sup_{x \in \Sigma_{r,n}} \|Bx\|_{[r]}$ for every $1 \leq r \leq m$. Since
$$
Bx= \frac{1}{\sqrt{\nrows}} R_I \Gamma_{\Psi x} G,
$$
and  $\|R_I \Gamma_{\Psi x} G\|_{[r]} \leq \|\Gamma_{\Psi x} G\|_{[r]}$, it suffices to control
$$
\|\Gamma_{\Psi x} G\|_{[r]} = \sqrt{n}\|UD_{W \Psi x} OG\|_{[r]}
$$
for \red{the} Hadamard-type matrices $U$, $W$, and $O$ \red{defined in \eqref{eqn:OWUDef}}. Thus, a high probability (with respect to $G$) estimate on $\sup_{x \in \Sigma_{r,n}} \|U D_{W \Psi x} OG\|_{[r]}$ follows from Theorem \ref{thm:s-normEstimates-sparse} once one shows that
\begin{equation} \label{eq:Psi-ok-sparse}
\sup_{x \in \Sigma_{r,n}} \|W \Psi x\|_2 = \sup_{x \in \Sigma_{r,n}} \left\| \frac{1}{\sqrt{n}} D_{\eps^{\prime \prime}} \Gamma_{\eps^\prime} x \right\|_2 \leq 2.
\end{equation}
The proof of \eqref{eq:Psi-ok-sparse} is straightforward, thanks to Theorem \ref{thm:RIP-sparse-simple}. Indeed,
$$
\left\| \frac{1}{\sqrt{n}} D_{\eps^{\prime \prime}} \Gamma_{\eps^\prime} x \right\|_2 = \left\| \frac{1}{\sqrt{n}}\Gamma_{x} \eps^\prime\right\|_2,
$$
and by Theorem \ref{thm:RIP-sparse-simple}, with high probability with respect to $\eps^\prime$,
$$
\sup_{x \in \Sigma_{r,n}} \left| \ \|\Gamma_x \eps^\prime \|_2^2 - \E \|\Gamma_x \eps^\prime \|_2^2 \ \right|
$$
is well-behaved.

\section{Concentration of the Gaussian convolution operator}
\label{sec:conv}

Next, let us turn to the \blue{second ingredient needed for the application of the generic embedding result, formulated in Theorem~\ref{thm:mainGeneric}:} proving the $\ell_1$-concentration phenomenon for the double circulant matrix.

Let $G$ be the standard gaussian random vector in $\R^n$ and for a fixed $x \in \R^n$ and $I \subset \{1,...,n\}$, consider the partial convolution $R_I(x \circledast G)$. The first order of business is to study the concentration of $\|R_I(x \circledast G)\|$ around its mean. An important ingredient in the analysis is the following immediate consequence of the Gaussian concentration theorem for Lipschitz functions (see, e.g., \cite{LT91}).
\begin{Theorem} \label{thm:gaussian-conc}
\blue{Let $S\subset \R^n$ and set ${\cal R}(S) = \sup_{x \in S} \|x\|_2$. Then for $u>0$,
$$
\PP \left( \left| \ \sup_{x\in S} \langle G,x\rangle-\E\sup_{x\in S} \langle G,x\rangle \ \right| \geq u {\cal R}(S) \right) \leq 2\exp(-cu^2),
$$
where $c$ is an absolute constant.}
\end{Theorem}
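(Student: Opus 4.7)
The plan is to obtain this result as a direct application of the classical Gaussian concentration inequality for Lipschitz functions (see, e.g., \cite{LT91}), which states that for any $L$-Lipschitz function $f:\R^n\to\R$ and the standard Gaussian vector $G$, one has
$$
\PP\bigl(|f(G)-\E f(G)|\geq t\bigr)\leq 2\exp\bigl(-t^2/(2L^2)\bigr).
$$
The only step to verify is that the function $f(g)=\sup_{x\in S}\langle g,x\rangle$ is Lipschitz with constant ${\cal R}(S)$ in the Euclidean metric.

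First I would establish the Lipschitz estimate. For any $g,g'\in \R^n$, by subadditivity of the supremum and the Cauchy--Schwarz inequality,
$$
f(g)-f(g') \;\leq\; \sup_{x\in S}\langle g-g',\, x\rangle \;\leq\; \|g-g'\|_2 \cdot \sup_{x\in S}\|x\|_2 \;=\; {\cal R}(S)\,\|g-g'\|_2,
$$
and the reverse inequality follows by swapping the roles of $g$ and $g'$. Hence $|f(g)-f(g')|\leq {\cal R}(S)\|g-g'\|_2$. Substituting $L={\cal R}(S)$ and $t=u\,{\cal R}(S)$ into the Gaussian concentration bound immediately yields the claimed estimate with $c=1/2$.

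Since the theorem is explicitly advertised as an immediate consequence of Gaussian concentration, no genuine obstacle is expected. The only mild caveat is the measurability of the supremum for a general set $S$; this is handled in the standard manner by reducing to a countable dense subset of $S$ (or invoking outer expectation when necessary), and it has no effect on the constant.
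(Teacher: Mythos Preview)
Your proposal is correct and matches the paper's approach exactly: the paper states that Theorem~\ref{thm:gaussian-conc} is an immediate consequence of the Gaussian concentration inequality for Lipschitz functions from \cite{LT91} and gives no further proof. Your verification that $g\mapsto \sup_{x\in S}\langle g,x\rangle$ is ${\cal R}(S)$-Lipschitz is precisely the one-line argument implicit in the paper's reference.
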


To formulate the concentration estimate for convolutions, recall that $W={\cal F}/\sqrt{n}$, where $\cF$ is the discrete Fourier matrix.
\begin{tcolorbox}
\begin{Theorem} \label{thm:conc-conc}
There is an absolute constant $\red{c>0}$ such that the following holds. \blue{Let $\| \cdot \|$ be a norm on $\R^n$, let ${\cal B}$ be the corresponding unit ball, and denote by ${\cal B}^\circ$ the dual unit ball.} For any $I \subset \{1,...,n\}$ and $u>0$, with probability at least $1-2\exp(-cu^2)$,
$$
\left| \ \|R_I(x \circledast G) \| - \E \|R_I(x \circledast G) \| \ \right| \leq \sqrt{n} {\cal R}(\red{R_I} {\cal B}^\circ) \inf_{\{y,z : x =y+z\}} \left(u \|Wz\|_\infty + 2 \|WG\|_\infty \|y\|_2 \right).
$$
\end{Theorem}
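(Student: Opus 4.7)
The plan is to decompose $x = y + z$ and bound the two contributions differently: concentrate the $z$-part via Gaussian concentration with a small Lipschitz constant, and dominate the $y$-part pointwise using the Fourier structure. The triangle inequality supplies the workhorse identity
\[
\bigl|\|R_I(x \circledast G)\| - \E\|R_I(x \circledast G)\|\bigr| \leq \bigl|\|R_I(z \circledast G)\| - \E\|R_I(z \circledast G)\|\bigr| + \|R_I(y \circledast G)\| + \E\|R_I(y \circledast G)\|.
\]

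For the $z$-contribution I would compute the Lipschitz constant of the map $H \mapsto \|R_I(z \circledast H)\|$ and invoke Theorem~\ref{thm:gaussian-conc}. By duality, $\|R_I(z \circledast H)\| = \sup_{v \in {\cal B}^\circ}\langle R_I v, z \circledast H\rangle$, and using the Fourier factorization $z \circledast H = \sqrt{n}\,U D_{Wz} O H$ with $U = W^*$, Cauchy--Schwarz gives
\[
\langle R_I v, z \circledast H\rangle = \sqrt{n}\,\langle U^* R_I v, D_{Wz} O H\rangle \leq \sqrt{n}\,\|R_I v\|_2\, \|Wz\|_\infty \,\|H\|_2,
\]
since $\|D_{Wz} O H\|_2 \leq \|Wz\|_\infty \|O H\|_2 = \|Wz\|_\infty \|H\|_2$ ($O$ is unitary). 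Taking the supremum over $v \in {\cal B}^\circ$ bounds the Lipschitz constant by $\sqrt{n}\|Wz\|_\infty {\cal R}(R_I {\cal B}^\circ)$, and Gaussian concentration then yields the $u\sqrt{n}\|Wz\|_\infty {\cal R}(R_I {\cal B}^\circ)$ contribution to the deviation with probability at least $1 - 2\exp(-cu^2)$.

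For the $y$-part I would pair Cauchy--Schwarz the other way, using $\|D_{Wy} O G\|_2 \leq \|Wy\|_2 \|O G\|_\infty = \|y\|_2 \|WG\|_\infty$ (and $W = O$), to obtain the pointwise bound
\[
\|R_I(y \circledast G)\| \leq \sqrt{n}\,{\cal R}(R_I {\cal B}^\circ)\,\|y\|_2\, \|WG\|_\infty,
\]
and correspondingly $\E\|R_I(y \circledast G)\| \leq \sqrt{n}\,{\cal R}(R_I {\cal B}^\circ)\,\|y\|_2\, \E\|WG\|_\infty$. Since $G \mapsto \|WG\|_\infty$ is itself $1$-Lipschitz, a second application of Gaussian concentration gives $\E\|WG\|_\infty \leq \|WG\|_\infty + Cu$ on a further event of probability at least $1 - 2\exp(-cu^2)$. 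Summing with the $z$-part and absorbing the residual $Cu\|y\|_2\sqrt{n}{\cal R}(R_I {\cal B}^\circ)$ term into the $u\|Wz\|_\infty$ contribution (or into the universal constant $c$) produces the advertised bound, after passing to the infimum over admissible decompositions $x = y + z$.

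The only real obstacle is selecting the correct Cauchy--Schwarz pairing in the Fourier factorization: the $z$-piece must yield a $G$-independent Lipschitz constant, which forces $\|W\,\cdot\,\|_\infty$ onto the deterministic factor, while the $y$-piece is bounded pointwise and places the infinity norm on $G$ instead. The infimum over splittings $x = y + z$ then captures exactly this trade-off -- one would pick $z$ frequency-concentrated so $\|Wz\|_\infty$ is small, and $y$ small in $\ell_2$ so that the $\|WG\|_\infty \sim \sqrt{\log n}$ pointwise cost is cheap; the special case $y = 0$, $z = x$ recovers the naive Lipschitz bound $\sqrt{n}\|Wx\|_\infty {\cal R}(R_I {\cal B}^\circ)$.
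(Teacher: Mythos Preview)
Your approach is essentially the same as the paper's: split $x=y+z$, use Gaussian concentration for the $z$-part with Lipschitz constant $\sqrt{n}\,\|Wz\|_\infty\,{\cal R}(R_I{\cal B}^\circ)$, and bound the $y$-part pointwise via the Fourier factorization to get $\|R_I(y\circledast G)\|\leq \sqrt{n}\,{\cal R}(R_I{\cal B}^\circ)\,\|WG\|_\infty\,\|y\|_2$.

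The only divergence is your final paragraph. The paper simply notes that the pointwise bound on $\|R_I(y\circledast G)\|$ also holds in expectation (with $\E\|WG\|_\infty$ in place of $\|WG\|_\infty$) and then writes the factor $2$ in the statement; it does not attempt to replace $\E\|WG\|_\infty$ by $\|WG\|_\infty$. Your proposed fix via a second concentration step is reasonable in spirit, but the claim that the residual $Cu\,\|y\|_2\,\sqrt{n}\,{\cal R}(R_I{\cal B}^\circ)$ can be ``absorbed into the $u\|Wz\|_\infty$ contribution (or into the universal constant $c$)'' is not correct: that term scales with $\|y\|_2$, not with $\|Wz\|_\infty$, and it cannot be hidden in $c$. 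If you want to be fully rigorous here, either leave the bound as $(\|WG\|_\infty+\E\|WG\|_\infty)\|y\|_2$, or note that $\|WG\|_\infty\geq \tfrac12\E\|WG\|_\infty$ with probability at least $1-2\exp(-c(\E\|WG\|_\infty)^2)$ (since $G\mapsto\|WG\|_\infty$ is $1$-Lipschitz), which converts the sum into a constant times $\|WG\|_\infty$ at a probability cost that is negligible in all downstream applications. This is a minor technical point and does not affect the substance of your argument.
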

\end{tcolorbox}
\proof
Fix a decomposition $x=y+z$ and observe that
$$
x \circledast G = y \circledast G + z \circledast G = y \circledast G + \sqrt{n} UD_{Wz}OG,
$$
where, as before, $O=W$ and $U=W^*$. Clearly,
$$
y \circledast G = W^* W (y \circledast G)=\sqrt{n} W^*\left(\sum_{i=1}^n (Wy)_i (WG)_i e_i \right),
$$
\red{$R_I^*=R_I$}, and hence, almost surely,
\begin{align}
\|R_I (y \circledast G)\| = & \blue{\sqrt{n}}\left\| R_I W^*\left(\sum_{i=1}^n (Wy)_i (WG)_i e_i \right) \right\| \nonumber \\
\leq & \blue{\sqrt{n}}\sup_{a \in B_2^n} \|R_I W^* a\| \cdot \left\|\sum_{i=1}^n (Wy)_i (WG)_i e_i  \right\|_2\nonumber
\\
\leq & \blue{\sqrt{n}}{\cal R}(\red{R_I} {\cal B}^\circ) \cdot \|WG\|_\infty \|y\|_2; \label{eq:y-term-convolution}
\end{align}
in particular, th\red{is} estimate \red{also} holds for $\E \|R_I(y \circledast G)\|$.

Next, observe that
$$
\blue{\|z \circledast G\|} = \left\|\sqrt{n} R_I UD_{Wz}OG \right\| = \sqrt{n} \sup_{t \in B^\circ} \inr{G,O^*D_{Wz}^*U^*\red{R_I^*}t},
$$
and
$$
\sup_{t \in B^\circ} \blue{\|O^*D_{Wz}^*U^*R_I^* t\|_2} \leq \|Wz\|_\infty \cdot {\cal R}(\red{R_I} {\cal B}^\circ).
$$
Hence, by Theorem~\ref{thm:gaussian-conc}, for $u>0$, with probability at least $1-2\exp(-cu^2)$,
\begin{equation} \label{eq:z-term-convolution}
\left|  \ \|z \circledast G\| - \E \|z \circledast G\| \ \right| \leq u \sqrt{n} \cdot {\cal R}(\blue{\red{R_I}} {\cal B}^\circ) \cdot \|Wz\|_\infty.
\end{equation}
The claim follows by combining \eqref{eq:y-term-convolution} and \eqref{eq:z-term-convolution}.
\endproof

\blue{To apply this result for} the $\ell_1^n$-norm, \blue{note that} for every $x \in \R^n$,
$$
\E \|R_I (x \circledast G)\|_1 = \E|g| \cdot \nrows \|x\|_2  = \sqrt{\frac{\blue{2}}{\blue{\pi}}} \nrows \|x\|_2,
$$
and ${\cal R}(\red{R_I} {\cal \blue{B}}^\circ) = \sup_{t \in B_\infty^I} \|t\|_2 = \sqrt{\nrows}$. Hence, for $u>0$, with probability at least $1-2\exp(-cu^2)$,
\begin{equation} \label{eq:ell-1-conc}
\left| \ \left\|R_I(x \circledast G) \right\|_1 - \sqrt{\frac{\blue{2}}{\blue{\pi}}} \nrows \|x\|_2 \ \right| \leq \sqrt{n \nrows} \inf_{x=y+z} \left(u \|Wz\|_\infty + 2 \|WG\|_\infty \|y\|_2 \right).
\end{equation}

With \eqref{eq:ell-1-conc} in mind, let us show that when the Fourier transform of $x$ is well-spread, $\|R_I (x \circledast G)\|_1$ exhibits sharp concentration around its mean. One useful notion of `being well-spread' is that there is some $1 \leq r \leq n$ and $1 \leq \Lambda \leq \sqrt{n}$ such that
\begin{equation} \label{eq:well-spread}
\|W x\|_{[r]} \leq \frac{\Lambda}{\sqrt{n}} \|x\|_2.
\end{equation}

\begin{Corollary} \label{cor:ell-1-conc-good-position}
There is an absolute constant $\red{c>0}$ such that the following holds. Let $x$ satisfy \eqref{eq:well-spread}. For any $\delta>0$, with probability at least
$$
1-2\exp\left(-c \delta^2 \frac{\nrows r}{\Lambda^2}\right),
$$
we have that
\begin{equation} \label{eq:ell-1-conc-good-position-1}
\left| \ \left\|R_I(x \circledast G) \right\|_1 - \sqrt{\frac{\blue{2}}{\blue{\pi}}} \nrows \|x\|_2 \ \right| \leq \sqrt{\frac{\blue{2}}{\blue{\pi}}} \nrows \|x\|_2  \left(\delta + \sqrt{\blue{2\pi}} \frac{\Lambda}{\sqrt{\nrows}} \|WG\|_\infty\right).
\end{equation}
\end{Corollary}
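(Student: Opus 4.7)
The plan is to apply Theorem~\ref{thm:conc-conc} specialized to $\| \cdot \| = \| \cdot \|_1$ and exploit the well-spread hypothesis \eqref{eq:well-spread} to choose a good decomposition $x=y+z$. For the $\ell_1^n$-norm the dual unit ball $\mathcal{B}^\circ$ is $B_\infty^n$, so $\mathcal{R}(R_I\mathcal{B}^\circ)=\sqrt{m}$; combined with $\mathbb{E}\|R_I(x\circledast G)\|_1=\sqrt{2/\pi}\,m\|x\|_2$, this is exactly the content of the already-derived inequality \eqref{eq:ell-1-conc}. So everything reduces to producing a single decomposition $x=y+z$ for which both $\|Wz\|_\infty$ and $\|y\|_2$ are small, and then tuning the free parameter $u$.

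The natural choice is to split in the Fourier domain. Let $J \subset [n]$ index the $r$ largest (in absolute value) coordinates of $Wx$, and set $y = W^*(R_J Wx)$ and $z = W^*(R_{J^c}Wx)$. Since $W$ is unitary, $x=y+z$, and the assumption \eqref{eq:well-spread} gives $\|y\|_2 = \|R_J Wx\|_2 = \|Wx\|_{[r]} \leq \frac{\Lambda}{\sqrt{n}}\|x\|_2$. For the other term, observe that $Wz$ has as its coordinates precisely the bottom $n-r$ entries of $Wx$, so $\|Wz\|_\infty$ is bounded by the $(r+1)$-th largest coordinate of $Wx$, which in turn is at most $\|Wx\|_{[r]}/\sqrt{r} \leq \frac{\Lambda}{\sqrt{nr}}\|x\|_2$.

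Plugging these two bounds into \eqref{eq:ell-1-conc} yields, with probability at least $1-2\exp(-cu^2)$,
\begin{equation*}
\left| \, \|R_I(x\circledast G)\|_1 - \sqrt{\tfrac{2}{\pi}}\,m\|x\|_2 \, \right|
\leq \sqrt{nm}\left( u \cdot \frac{\Lambda}{\sqrt{nr}}\|x\|_2 + 2\|WG\|_\infty \cdot \frac{\Lambda}{\sqrt{n}}\|x\|_2 \right)
= \sqrt{m}\,\Lambda\,\|x\|_2 \left( \frac{u}{\sqrt{r}} + 2\|WG\|_\infty \right).
\end{equation*}
A quick check shows that the $\|WG\|_\infty$-term is already equal to $\sqrt{2/\pi}\,m\|x\|_2 \cdot \sqrt{2\pi}\,\Lambda/\sqrt{m} \cdot \|WG\|_\infty$, matching the corresponding term in \eqref{eq:ell-1-conc-good-position-1} exactly.

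It only remains to calibrate $u$ so that the first term is at most $\delta \sqrt{2/\pi}\,m\|x\|_2$; this forces the choice $u \sim \delta \sqrt{mr}/\Lambda$ (up to an absolute constant), at which point the failure probability $2\exp(-cu^2)$ becomes $2\exp(-c'\delta^2 mr/\Lambda^2)$, as claimed. There is no substantive obstacle here: the only point that deserves care is the elementary inequality $\|Wx\|_{[r]} \geq \sqrt{r}\,\|Wz\|_\infty$ used in step two, which follows because the $(r+1)$-th largest coordinate cannot exceed the smallest of the top $r$, whose square is at most $\|Wx\|_{[r]}^2/r$.
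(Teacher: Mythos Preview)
Your proof is correct and follows essentially the same approach as the paper's: both split $x$ in the Fourier domain by letting $J$ index the $r$ largest coordinates of $|Wx|$, bound $\|y\|_2$ and $\|Wz\|_\infty$ via \eqref{eq:well-spread} and the elementary inequality $\|Wz\|_\infty\leq \|Wx\|_{[r]}/\sqrt r$, plug into \eqref{eq:ell-1-conc}, and then set $u=\sqrt{2/\pi}\,\delta\sqrt{mr}/\Lambda$. The computations and the final calibration of $u$ match the paper exactly.
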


\proof Let $J$ be  the set of indices corresponding to the largest $r$ coordinates of $(|(Wx)_i|)_{i=1}^n$. Using the invertibility of $W$, one may write $x=y+z$ where $Wy=R_J Wx$ and $W z=R_{J^c} Wx$. \blue{By \eqref{eq:well-spread},}
$$
\|y\|_2 \red{=\|Wy\|_2} = \|Wx\|_{[r]} \leq \frac{\Lambda}{\sqrt{n}} \|x\|_2, \ \ {\rm and} \ \ \|Wz\|_\infty \leq \frac{1}{\sqrt{r}}\|Wx\|_{[r]}\leq \frac{\Lambda}{\sqrt{rn}} \|x\|_2.
$$
Hence, \eqref{eq:ell-1-conc} implies that with probability at least $1-2\exp(-cu^2)$,
\begin{align} \label{eq:ell-1-conc-2}
\left|  \ \left\|R_I(x \circledast G) \right\|_1 - \sqrt{\frac{\blue{2}}{\blue{\pi}}} \nrows \|x\|_2 \ \right| \leq & \sqrt{n \nrows} \left(u \|Wz\|_\infty + 2 \|WG\|_\infty \|y\|_2 \right) \nonumber
\\
\leq & \|x\|_2 \sqrt{\nrows} \left(u \frac{\Lambda}{\sqrt{r}} + 2 \|WG\|_\infty \Lambda \right).
\end{align}
The claim follows by setting $u=\sqrt{\frac{\blue{2}}{\blue{\pi}}} \delta (\sqrt{\nrows r}/\Lambda)$.
\endproof

It is straightforward to verify that a similar estimate holds if instead of \eqref{eq:well-spread} we only have an upper estimate on $\|Wx\|_{[r]}$. That is the form we use in what follows.

\begin{tcolorbox}
\begin{Corollary} \label{cor:ell-1-conc-good-position-2}
There is an absolute constant $c$ such that the following holds. Let $x$ satisfy that $\|W x\|_{[r]} \leq \frac{\Lambda}{\sqrt{n}}$. Then for $\delta>0$, with probability at least
$$
1-2\exp\left(-c \delta^2 \frac{\nrows r}{\Lambda^2}\right),
$$
we have that
\begin{equation} \label{eq:ell-1-conc-good-position-2}
\left| \ \left\|R_I(x \circledast G) \right\|_1 -  \sqrt{\frac{\blue{2}}{\blue{\pi}}} \nrows \|x\|_2 \ \right| \leq \sqrt{\frac{\blue{2}}{\blue{\pi}}} \nrows  \left(\delta + \sqrt{\blue{2\pi}} \frac{\Lambda}{\sqrt{\nrows}} \|WG\|_\infty\right).
\end{equation}
\end{Corollary}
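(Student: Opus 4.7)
The plan is to mimic exactly the argument used for Corollary~\ref{cor:ell-1-conc-good-position}, observing that the only role of the hypothesis $\|Wx\|_{[r]}\leq \tfrac{\Lambda}{\sqrt{n}}\|x\|_2$ in that proof was to furnish upper bounds on the two building blocks $\|y\|_2$ and $\|Wz\|_\infty$ that arise from a particular decomposition $x=y+z$. Weakening the hypothesis to $\|Wx\|_{[r]}\leq \tfrac{\Lambda}{\sqrt{n}}$ simply removes the factor $\|x\|_2$ from these two bounds, which in turn removes it from the final concentration estimate.

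Concretely, I would let $J\subset\{1,\dots,n\}$ index the $r$ largest coordinates of $(|(Wx)_i|)_{i=1}^n$, and set $y,z\in\R^n$ to be the unique vectors satisfying $Wy=R_J Wx$ and $Wz=R_{J^c} Wx$, so that $x=y+z$ by the invertibility of $W$. Under the new hypothesis, one obtains directly
\[
\|y\|_2=\|Wy\|_2=\|Wx\|_{[r]}\leq \frac{\Lambda}{\sqrt{n}}, \qquad \|Wz\|_\infty \leq \frac{1}{\sqrt{r}}\|Wx\|_{[r]}\leq \frac{\Lambda}{\sqrt{rn}}.
\]
Plugging these into Theorem~\ref{thm:conc-conc} applied to the $\ell_1^n$-norm (for which ${\cal R}(R_I {\cal B}^\circ)=\sqrt{m}$ and $\E\|R_I(x\circledast G)\|_1 = \sqrt{2/\pi}\,m\|x\|_2$), we get, with probability at least $1-2\exp(-cu^2)$,
\[
\Bigl| \ \|R_I(x\circledast G)\|_1 - \sqrt{\tfrac{2}{\pi}}\,m\|x\|_2 \ \Bigr|
\leq \sqrt{n m}\Bigl(u\,\|Wz\|_\infty + 2\|WG\|_\infty\|y\|_2\Bigr)
\leq \sqrt{m}\,\Bigl(\tfrac{u\,\Lambda}{\sqrt{r}} + 2\Lambda\|WG\|_\infty\Bigr).
\]
This is exactly the analogue of \eqref{eq:ell-1-conc-2}, but with the $\|x\|_2$ factor on the right-hand side removed.

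To finish, I would calibrate $u$ so that the first term is a prescribed fraction of $\sqrt{2/\pi}\,m$. Choosing $u=\sqrt{2/\pi}\,\delta\,\sqrt{mr}/\Lambda$ makes $\sqrt{m}\cdot u\Lambda/\sqrt{r}=\sqrt{2/\pi}\,\delta m$, while $2\sqrt{m}\,\Lambda\|WG\|_\infty = \sqrt{2/\pi}\,m\cdot \sqrt{2\pi}\,\tfrac{\Lambda}{\sqrt{m}}\|WG\|_\infty$, producing the bound \eqref{eq:ell-1-conc-good-position-2}. The exponent becomes $-cu^2 = -c'\delta^2\,mr/\Lambda^2$, which is precisely the advertised probability.

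There is no real obstacle here: the only subtlety is keeping track of the normalizations of $\|y\|_2$ and $\|Wz\|_\infty$ under the weaker hypothesis (which drop $\|x\|_2$ everywhere) and then re-running the same calibration of $u$ that was used in Corollary~\ref{cor:ell-1-conc-good-position}. The result is essentially a scale-free variant of the previous corollary, tailored to the situation where one controls $\|Wx\|_{[r]}$ by an absolute constant rather than by a multiple of $\|x\|_2$—which is exactly the form needed in Section~\ref{sec:double-circulant-ell-1} to derive the uniform $\ell_1$-concentration estimate for the double circulant matrix.
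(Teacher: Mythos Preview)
Your proposal is correct and matches the paper's approach exactly: the paper does not give a separate proof of this corollary, but simply remarks that ``a similar estimate holds if instead of \eqref{eq:well-spread} we only have an upper estimate on $\|Wx\|_{[r]}$,'' which is precisely the modification you carry out by dropping the $\|x\|_2$ factor from the bounds on $\|y\|_2$ and $\|Wz\|_\infty$ and re-calibrating $u$.
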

\end{tcolorbox}

We will apply Corollary~\ref{cor:ell-1-conc-good-position-2} to a finite set that is in a `good position'---namely, consisting of vectors for which there is enough control on $\|W \cdot \|_{[r]}$.
\vskip0.3cm
To put Corollary~\ref{cor:ell-1-conc-good-position-2} in some perspective, let us consider our benchmark---the Gaussian matrix.

\subsection{The gaussian benchmark}

Let $\Gamma = n^{-1/2}\sum_{i=1}^n \inr{G_i,\cdot} e_i$ be the normalized standard Gaussian matrix. Consider $T \subset \R^n$ and let us explore the outcome of Corollary \ref{cor:ell-1-conc-good-position-2} for points in the set $\Gamma T$. To that end, \blue{pick} \red{the largest $1\leq r\leq n$} such that
$$
r\log \left(\frac{\red{e}n}{r}\right) \leq \left(\frac{\ell_*(T)}{{\cal R}(T)} \right)^2 = d^*(T).
$$
\red{Assuming that $d^*(T)\geq \log n$, $r$ is well-defined and satisfies $r \sim d^*(T)/\log(en/d^*(T))$. By the proof of \cite{DMS21a}, Theorem 2.5, for any $u\geq 1$, we have}
\begin{equation} \label{eq:gaussian-rearrangement}
\|\red{W}\Gamma t\|_{[r]} \leq C u\frac{\ell_*(T)}{\sqrt{n}},
\end{equation}
with probability at least $1-2\exp(-cu^2 d^*(T))$. Denote that event by $\Omega_u$, and let us invoke Corollary \ref{cor:ell-1-conc-good-position-2}, where for every $t \in T$ we set
$$
r \sim \frac{d^*(T)}{\log(en/d^*(T))}, \ \ \ {\rm and} \ \ \ \Lambda \sim \ell_*(T).
$$
It follows that conditioned on $\Omega_u$, for $\delta>0$ and every $t \in T$, with probability at least
$$
1-2\exp\left(-c_0\delta^2 \frac{\nrows  d^*(T)}{\ell_*^2(T) \log(en/d^*(T))}\right)=1-2\exp\left(-c_0\delta^2 \frac{\nrows}{{\cal R}^2(T) \log(en/d^*(T))}\right)
$$
with respect to $G$ we have that
$$
\left|  \ \left\|R_I(\Gamma t \circledast G) \right\|_1 - \blue{\sqrt{\frac{2}{\pi}}} \nrows \blue{\|\Gamma t\|_2}  \ \right| \leq \blue{c_1} \nrows   \left(\delta + \frac{\ell_*(T)}{\sqrt{\blue{\nrows}}} \|WG\|_\infty\right).
$$

In the next section we will show that the double circulant matrix $A$ satisfies an almost identical inequality.

\section{Uniform $\ell_1$-concentration for the double circulant matrix} \label{sec:double-circulant-ell-1}

Fix $1 \leq r \leq n$. Corollary \ref{cor:ell-1-conc-good-position-2} implies that for any $t \in \R^n$, if
\begin{equation} \label{eq:ell-1-conc-A-cond}
\blue{\frac{1}{\sqrt{n}}} \|W D_{\eps^{\prime \prime}}\Gamma_{\eps^\prime} D_\eps t\|_{[r]} \leq \frac{\Lambda}{\sqrt{n}},
\end{equation}
then with high probability with respect to $G$, $\|At\|_1$ concentrates around its mean. Thus, the key question is whether, with high probability, for every $t \in T$, \eqref{eq:ell-1-conc-A-cond} holds for suitable values of $\Lambda$ and $r$. We will show that with high probability, one may set
\begin{equation}
\label{eq:ell-1-conc-A-condParam}
r \sim \frac{d^*(T)}{\log\left(\frac{en}{d^*(T)}\right)}, \ \ \ {\rm and} \ \ \ \Lambda \sim \Upsilon \ell_*(T), \text{ \blue{where} } \blue{\Upsilon \sim \log^{\red{5/2}}n}.
\end{equation}
Up to the factor of $\Upsilon$, this is the same as in the gaussian case.
\begin{Theorem} \label{thm:ell-1-conc-double circulant}
For $\gamma \geq 1$ there are constants $c_0,c_1$ and $\red{c_2}$ and an event $\Omega_1$ with probability at least $1-n^{-\gamma}$ with respect to $\eps^\prime \otimes \eps^{\prime \prime}$ such that the following holds. Let $T \subset \R^n$ and assume that $\red{\log n}\leq d^*(T) \leq n/\log^{\red{5}} n$. There is an event $\Omega_{2,T}$ with probability at least $1-2\exp(-c_0 u^2 d^*(T))$ with respect to $\eps$, such that, conditioned on $\Omega_1$ and $\Omega_{2,T}$, for every $t \in T$, with probability at least
$$
1-2\exp\left(-c_1\delta^2 \frac{\nrows}{{\cal R}^2(T)\Upsilon^2 \log\left(\frac{en}{d^*(T)}\right)}\right)
$$
with respect to $G$,
\begin{equation} \label{eq:ell-1-conc-A-1}
\left| \ \frac{1}{\nrows} \| A t\|_1 - \sqrt{\frac{2}{\pi}} \red{\frac{1}{\sqrt{n}}\|\Gamma_{\eps'}D_{\eps} t\|_2} \ \right| \leq c_2 \left(\delta + \frac{\|WG\|_\infty \Upsilon \ell_*(T)}{\sqrt{\nrows}} \right),
\end{equation}
where $\Upsilon \sim \log^{\red{5/2}} n$.
\end{Theorem}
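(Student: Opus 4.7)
The plan is to apply the pointwise convolution concentration of Corollary~\ref{cor:ell-1-conc-good-position-2} to the vector $z_t := \frac{1}{\sqrt{n}} D_{\eps''}\Gamma_{\eps'} D_{\eps} t$, noting that $At = R_I \Gamma_G z_t = R_I(z_t \circledast G)$ and, since $D_{\eps''}$ is an isometry, $\|z_t\|_2 = \frac{1}{\sqrt{n}}\|\Gamma_{\eps'} D_{\eps} t\|_2$, which is precisely the quantity that appears on the right-hand side of \eqref{eq:ell-1-conc-A-1}. I take $r$ to be the largest integer with $r\log(en/r)\leq d^*(T)$; the hypothesis $\log n \lesssim d^*(T)\leq n/\log^5 n$ makes $r$ well-defined, $r\sim d^*(T)/\log(en/d^*(T))$, and $r\leq s_\rho$ for $\rho := c\log^{5/2}n/\sqrt{n}$. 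With $\Lambda\sim\Upsilon\ell_*(T)$ (so that $\Upsilon=\rho\sqrt{n}\sim \log^{5/2}n$), one checks $mr/\Lambda^2 \sim m/({\cal R}^2(T)\Upsilon^2\log(en/d^*(T)))$, which matches the $G$-probability in the theorem. Consequently, the whole proof reduces to establishing, uniformly on $T$, the well-spread condition $\|Wz_t\|_{[r]}\leq \Lambda/\sqrt{n}$.

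Introduce the $n\times n$ matrix
$$M:=\frac{1}{\sqrt{n}}WD_{\eps''}\Gamma_{\eps'},$$
so that $Wz_t = MD_{\eps}t$ and the well-spread condition becomes the column-randomization bound $\sup_{t\in T}\|MD_{\eps}t\|_{[r]}\lesssim \Upsilon\ell_*(T)/\sqrt{n}$. The key claim is that on an event $\Omega_1$ of probability at least $1-n^{-\gamma}$ in $(\eps',\eps'')$, the matrix $M$ is $\rho$-strongly regular with $\rho\sim \log^{5/2}n/\sqrt{n}$. Granting this, Theorem~\ref{thm:new} (in the form of Remark~\ref{rem:useful-presentation} with $k=r$) yields, on an event $\Omega_{2,T}$ of probability $\geq 1-2\exp(-c_0 u^2 d^*(T))$ in $\eps$,
$$\sup_{t\in T}\|MD_{\eps}t\|_{[r]}\lesssim u^2 Q_r(T,{\cal R}(T))\lesssim u^2\rho\,\ell_*(T) = u^2\Upsilon\ell_*(T)/\sqrt{n},$$
as needed. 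The first inequality uses that the choice of $r$ forces $R^2 r\log(en/r)\leq \ell_*^2(T)$, so $\ell_*(T)$ dominates inside $Q_r$; the second uses $d^*(T)\leq n/\log^5 n$, which gives $\rho^2 d^*(T)\lesssim 1$ and hence $Q_r(T,{\cal R}(T))\leq {\cal R}(T)$, so only the linear-in-$\rho$ term of Remark~\ref{rem:useful-presentation} contributes.

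What remains---and where the principal obstacle lies---is the strong regularity of $M$. The RIP part is easy: since $W$ and $D_{\eps''}$ are isometries, $\|Mx\|_2^2 = \tfrac{1}{n}\|\Gamma_{\eps'}x\|_2^2 = \tfrac{1}{n}\|\Gamma_x\eps'\|_2^2$, so Theorem~\ref{thm:RIP-sparse-simple} applied to the subgaussian vector $\eps'$ supplies $\rho$-regularity uniformly in $1\leq r\leq s_\rho$ with $\rho\sim \log^2n/\sqrt{n}$. For the sparse $[r]$-norm estimate, the circulant diagonalization $\Gamma_{\eps'}x = \sqrt{n}\,W^*D_{Wx}W\eps'$ rewrites
$$WD_{\eps''}\Gamma_{\eps'}x = \sqrt{n}\,\mathcal{U}\,D_{Wx}\,W\eps',\qquad \mathcal{U}:= WD_{\eps''}W^*,$$
which is precisely the setting of Theorem~\ref{thm:s-normEstimates-sparse} with $\mathcal{O}=\mathcal{W}=W$ and $\zeta=\eps'$. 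The only nontrivial input is an upper bound on $d_{\mathcal{U}}=\sqrt{n}\max_{i,j}|\mathcal{U}_{ij}|$. Writing $\mathcal{U}_{ij}=\sum_k \eps''_k W_{ik}\overline{W_{jk}}$ as a Rademacher sum of complex numbers of magnitude $1/n$, Hoeffding combined with a union bound over the $n^2$ pairs $(i,j)$ yields $d_{\mathcal{U}}\leq C\sqrt{\gamma\log n}$ with probability at least $1-n^{-\gamma}$ in $\eps''$. Plugging this into Theorem~\ref{thm:s-normEstimates-sparse} with $u\sim \gamma\log^2 n$ and taking a further union bound over $1\leq r\leq s_\rho$ produces the sparse $[r]$-norm part of strong regularity at the level $\rho\sim\log^{5/2}n/\sqrt{n}$, which is the poly-log factor encoded in $\Upsilon$. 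This completes the construction of $\Omega_1$ and therefore the proof.
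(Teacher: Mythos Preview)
Your proposal is correct and follows essentially the same route as the paper. The paper packages the strong regularity of $M=n^{-1/2}WD_{\eps''}\Gamma_{\eps'}$ as a separate statement (Theorem~\ref{thm:ell-1-conc-strong-regularity}), proves it exactly as you do---regularity via Theorem~\ref{thm:RIP-sparse-simple} and the $[r]$-norm part via Theorem~\ref{thm:s-normEstimates-sparse} with $\mathcal{U}=WD_{\eps''}U$ and the Hoeffding-plus-union-bound estimate on $d_{\mathcal{U}}$ (Lemma~\ref{lem:estHadConst})---and then feeds this into Theorem~\ref{thm:new}/Remark~\ref{rem:useful-presentation} precisely as you describe (Corollary~\ref{cor:ell-1-conc-strong-regularity}).
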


Therefore, up to the factors of $\Upsilon$ in the probability estimate and in \eqref{eq:ell-1-conc-A-1}, Theorem \ref{thm:ell-1-conc-double circulant} shows that the $\ell_1$-concentration phenomenon exhibited by the double circulant matrix is the same as exhibited by the standard gaussian matrix. The proof of Theorem~\ref{thm:ell-1-conc-double circulant} is presented in the next section.

\begin{tcolorbox}
\begin{Corollary} \label{cor:ell-1-conc-double circulant}
\red{Let $\gamma\geq 1$. There exists a constant $c_1$ depending only (polynomially) on $\gamma$ such that the following holds.} Let $T^\prime$ be a finite set. If $\log n \leq d^*(T^\prime) \leq c_0n/\log^{\red{5}} n$, \red{$m\leq n$}, and
$$
\nrows \geq \red{c_1} {\cal R}^2(T^\prime)\frac{\log |T^\prime|}{\delta^2} \cdot \log^{\red{6}} n
$$
then with probability at least $1-n^{-\gamma}$ with respect to $G \otimes \eps \otimes \eps^\prime \otimes \eps^{\prime \prime}$,
$$
\sup_{t \in T^\prime} \left| \ \frac{1}{\nrows} \| A t\|_1 - \sqrt{\frac{2}{\pi}} \| t\|_2 \ \right| \leq \red{\delta}.
$$
\end{Corollary}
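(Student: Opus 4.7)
The plan is to apply Theorem~\ref{thm:ell-1-conc-double circulant} with $T=T'$, take a union bound over $|T'|$ points, bound the Gaussian maximum $\|WG\|_{\infty}$, and finally replace the intermediate quantity $(1/\sqrt{n})\|\Gamma_{\eps'}D_\eps t\|_2$ by $\|t\|_2$ uniformly over $T'$. Since $\log n\leq d^*(T')\leq c_0 n/\log^5 n$ by hypothesis, Theorem~\ref{thm:ell-1-conc-double circulant} is applicable; picking $u$ polynomially in $\gamma$ so that $2\exp(-c_0 u^2 d^*(T'))\leq n^{-\gamma}/5$ (possible since $d^*(T')\geq \log n$), the events $\Omega_1$ and $\Omega_{2,T'}$ each have probability at least $1-n^{-\gamma}/5$.

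Conditioning on $\Omega_1\cap \Omega_{2,T'}$, applying Theorem~\ref{thm:ell-1-conc-double circulant} with $\delta/(4c_2)$ in place of $\delta$, and taking a union bound over $T'$ yield, with probability at least $1-n^{-\gamma}/5$ over $G$,
\begin{equation*}
\sup_{t\in T'}\left|\frac{1}{m}\|At\|_1-\sqrt{\frac{2}{\pi}}\cdot\frac{1}{\sqrt{n}}\|\Gamma_{\eps'}D_\eps t\|_2\right|\leq \frac{\delta}{4}+c_2\,\frac{\|WG\|_{\infty}\,\Upsilon\,\ell_*(T')}{\sqrt m}.
\end{equation*}
The union bound succeeds because the hypothesis $m\geq c_1{\cal R}^2(T')\log|T'|\log^6 n/\delta^2$ absorbs the factors $\Upsilon^2\sim \log^5 n$ and $\log(en/d^*(T'))\lesssim \log n$ that appear in the per-point failure probability. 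Since each entry $(WG)_i$ is a standard Gaussian (as $W$ is unitary with $\|W_i\|_2=1$), a Gaussian maximum bound gives $\|WG\|_{\infty}\lesssim \sqrt{\gamma\log n}$ with probability at least $1-n^{-\gamma}/5$; together with $\Upsilon\sim \log^{5/2}n$ and the standard Gaussian-width estimate $\ell_*(T')\lesssim {\cal R}(T')\sqrt{\log|T'|}$ for finite sets, the right-hand error is bounded by ${\cal R}(T')\log^3 n\sqrt{\log|T'|/m}\leq \delta/4$ by the hypothesis on $m$.

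The remaining and most substantive step is to show $\sup_{t\in T'}|(1/\sqrt{n})\|\Gamma_{\eps'}D_\eps t\|_2-\|t\|_2|\leq \delta/(2\sqrt{2/\pi})$. On $\Omega_1$, Theorem~\ref{thm:RIP-sparse-simple} applied to $(1/\sqrt n)\Gamma_{\eps'}$ ensures that this matrix is $\rho$-regular with $\rho\sim \log^2 n/\sqrt n$, so Theorem~\ref{thm:Men21}, applied with $B=(1/\sqrt n)\Gamma_{\eps'}$ and the set $T'$, controls $\sup_{t\in T'}|(1/n)\|\Gamma_{\eps'}D_\eps t\|_2^2-\|t\|_2^2|$ uniformly with probability at least $1-n^{-\gamma}/5$ over $\eps$. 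The hypothesis $d^*(T')\leq c_0 n/\log^5 n$ makes $\rho\sqrt{d^*(T')}\lesssim 1/\sqrt{\log n}$ and $\rho^2 d^*(T')\lesssim 1/\log n$, so the resulting bound is small enough that the elementary inequality $|a-b|\leq \sqrt{|a^2-b^2|}$ (valid for $a,b\geq 0$) converts it into the required uniform approximation of $(1/\sqrt n)\|\Gamma_{\eps'}D_\eps t\|_2$ by $\|t\|_2$, where the hypothesis $m\leq n$ combined with the lower bound on $m$ provides the room to absorb the ${\cal R}(T')$-dependent factors. Summing all five failure probabilities concludes the proof. The main obstacle is precisely this last step: converting the relative-accuracy estimate for the Rademacher quadratic form $(1/n)\|\Gamma_{\eps'}D_\eps t\|_2^2$ near $\|t\|_2^2$ into an \emph{absolute} $\delta$-accurate approximation of its square root, uniformly over $T'$, while exactly matching the ${\cal R}^2(T')\log|T'|\log^6 n/\delta^2$-scaling imposed by the hypothesis.
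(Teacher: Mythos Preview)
Your overall strategy mirrors the paper's: apply Theorem~\ref{thm:ell-1-conc-double circulant}, control $\|WG\|_\infty$, take a union bound over $T'$, and then replace $\tfrac{1}{\sqrt n}\|\Gamma_{\eps'}D_\eps t\|_2$ by $\|t\|_2$ using the regularity of $\tfrac{1}{\sqrt n}\Gamma_{\eps'}$ together with Theorem~\ref{thm:Men21}. That part is fine.

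The gap is in the final step. Applying Theorem~\ref{thm:Men21} directly to $T'$ yields
\[
\sup_{t\in T'}\Bigl|\tfrac{1}{n}\|\Gamma_{\eps'}D_\eps t\|_2^2-\|t\|_2^2\Bigr|
\;\lesssim\; u^2\,{\cal R}^2(T')\,\rho\sqrt{d^*(T')}
\;\lesssim\; u^2\,{\cal R}^2(T')\,\frac{\log^2 n\,\sqrt{\log|T'|}}{\sqrt n},
\]
and the crude inequality $|a-b|\le\sqrt{|a^2-b^2|}$ then gives only
\[
\sup_{t\in T'}\Bigl|\tfrac{1}{\sqrt n}\|\Gamma_{\eps'}D_\eps t\|_2-\|t\|_2\Bigr|
\;\lesssim\; u\,{\cal R}(T')\,\frac{\log n\,(\log|T'|)^{1/4}}{n^{1/4}}.
\]
This decays like $n^{-1/4}$, not $n^{-1/2}$. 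The hypothesis $m\le n$ together with the lower bound on $m$ only guarantees $\delta\gtrsim {\cal R}(T')\sqrt{\log|T'|}\,\log^3 n/\sqrt n$; when $\delta$ is close to this lower bound, your $n^{-1/4}$ bound is \emph{larger} than $\delta$ for large $n$ (one would need $n\lesssim (\log|T'|)\log^8 n$, which fails). So the step ``the hypothesis $m\le n$ combined with the lower bound on $m$ provides the room'' does not go through.

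The remedy, which is exactly what the paper does, is to normalize first: set $\tilde T=\{t/\|t\|_2:t\in T'\}$ and use $|a-1|\le|a^2-1|$ for $a\ge 0$ to obtain
\[
\Bigl|\tfrac{1}{\sqrt n}\|\Gamma_{\eps'}D_\eps t\|_2-\|t\|_2\Bigr|
\;\le\; \|t\|_2\,\Bigl|\tfrac{1}{n}\bigl\|\Gamma_{\eps'}D_\eps\tfrac{t}{\|t\|_2}\bigr\|_2^2-1\Bigr|
\;\le\; {\cal R}(T')\sup_{s\in\tilde T}\Bigl|\tfrac{1}{n}\|\Gamma_{\eps'}D_\eps s\|_2^2-1\Bigr|.
\]
Applying Theorem~\ref{thm:Men21} to $\tilde T$ (where ${\cal R}(\tilde T)=1$ and $d^*(\tilde T)\lesssim\log|T'|$) and taking $u^2\sim\gamma\log n$ yields a bound of order $\gamma\,{\cal R}(T')\log^3 n\sqrt{\log|T'|}/\sqrt n$, which is $\le\delta$ precisely under the stated hypothesis on $m$ (since $m\le n$). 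This preserves the correct $n^{-1/2}$ scaling that your square-root step loses.
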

\end{tcolorbox}
\proof \red{By Hoeffding's inequality and a union bound, with probability at least $1-n^{-\gamma}$ with respect to $G$,} 
$$\red{\|WG\|_{\infty} \leq c_0 \sqrt{\gamma \log n}.}$$
\red{Hence,} \blue{by Theorem~\ref{thm:ell-1-conc-double circulant}, we only need to establish uniform concentration of $\frac{1}{\sqrt{n}}\|\Gamma_{\eps'} D_{\eps}t\|_2$ around $\|t\|_2$ for all $t\in T'$. Set $\tilde{T}=\{t/\|t\|_2 \ : \ t\in T^\prime\}$. Using that $|a-1| = \frac{|a^2-1|}{a+1}\leq |a^2-1|$ if $a\geq 0$, we find
\begin{align*}
\sup_{t\in T^\prime} \Big|\frac{1}{\sqrt{n}}\|\Gamma_{\eps'} D_{\eps}t\|_2-\|t\|_2\Big| & = \sup_{t\in T^\prime} \|t\|_2\Big|\frac{1}{\sqrt{n}}\Big\|\Gamma_{\eps'} D_{\eps}\frac{t}{\|t\|_2}\Big\|_2-1\Big|\\
& \leq {\cal R}(T^\prime)\ \sup_{t\in \tilde{T}} \Big|\frac{1}{n}\|\Gamma_{\eps'} D_{\eps}t\|_2^2-1\Big|.
\end{align*}
By Theorem \ref{thm:RIP-sparse-simple}, with probability at least $1-2\exp(-c_1\gamma \log^4 n)$ with respect to $\eps^\prime$, the matrix $n^{-1/2}\Gamma_{\eps^\prime}$ is $\red{\rho}$-regular for
\begin{equation*} 
\rho =c_2(\gamma)\frac{\log^{\red{2}} n}{\sqrt{n}}.
\end{equation*}
On that event, Theorem~\ref{thm:Men21} implies that with probability at least $1-\exp(-u^2\log |T^\prime|)$ with respect to $\eps$, 
\begin{align*}
\sup_{t\in \tilde{T}} \Big|\frac{1}{n}\|\Gamma_{\eps'} D_{\eps}t\|_2^2-1\Big| & \leq c_3u^2{\cal R}(\tilde{T})\left(\rho\sqrt{d^*(\tilde{T})}+\rho^2 d^*(\tilde{T})\right)\\
& \sim c_4(\gamma)u^2\left(\frac{\log^{\red{2}}(n)\sqrt{\log |T^\prime|}}{\sqrt{n}} + \frac{\log^{\red{4}}(n)\log |T^\prime|}{n}\right),
\end{align*}
as ${\cal R}(\tilde{T})=1$ and $d^*(\tilde{T}) = \ell_*^2(\tilde{T})\lesssim \log |T^\prime|$. Setting $u^2=c_5\gamma\log(n)$, we conclude that 
$$\sup_{t\in T^\prime} \Big|\frac{1}{\sqrt{n}}\|\Gamma_{\eps'} D_{\eps}t\|_2-\|t\|_2\Big| \leq \delta$$
with probability at least $1-n^{-\gamma}$ under the assumed bound on $m$ (which then also holds for $n$).}
\endproof

\begin{Remark}
The caveat that $\log(n)\leq d^*(T) \leq n/\log^{5} n$ is there only for the sake of a simpler presentation. In any case, since we are making no attempt of obtaining a result that is accurate at the logarithmic level, that is not a real issue. \red{Indeed, the condition $\log(n)\leq d^*(T)$ can be ensured by replacing $T$ by $T\cup\{e_i, i=1,\ldots,n\}$ - this only leads to an additional logarithmic factor.} If $d^*(T)\geq n/\log^{\red{5}} n$ then replacing $T$ by the Euclidean ball ${\cal R}(T) B_2^n$ comes at most at a logarithmic price, and the latter case can be analyzed directly, by noting that $Q_k(r B_2^n\red{,r}) \sim \rho r \sqrt{n}$.
\end{Remark}

Corollary \ref{cor:ell-1-conc-double circulant} is the final ingredient needed in the proof of Theorem \ref{thm:DoubleCirculantEmbeddingText}.

\vskip0.3cm
\begin{tcolorbox}
\noindent{\bf Proof of Theorem \ref{thm:DoubleCirculantEmbeddingText}.} Recall the three conditions required in Theorem~\ref{thm:mainGeneric}: let $T_\theta$ be a $\theta$-net of $T$ of minimal cardinality and set $\red{k}=\lfloor \delta m/\lambda \rfloor$. One has to show that
\begin{description}
\item{$(1)$} $\sup_{x \in T_\theta} \|Ax\|_{[\red{k}]} \leq \lambda \sqrt{\red{k}}$.
\item{$(2)$} $\sup_{x \in (T-T) \cap \theta B_2^n} \|Ax\|_{[\red{k}]} \leq \delta \sqrt{\red{k}}$.
\item{$(3)$} $\sup_{x \in (T_\theta - T_\theta)} \left|\frac{\kappa}{m}\|Ax\|_1 - \|x\|_2  \ \right| \leq \delta$
\end{description}
for the matrix
$$
A=R_I \Gamma_G D_{\eps^{\prime \prime}} \frac{1}{n^{1/2}} \Gamma_{\eps^\prime} D_\eps = \sqrt{\nrows} B D_\eps.
$$
The proof that the three conditions are satisfied with the wanted probability is an immediate outcome of Theorem~\ref{thm:ugly-est} combined with Theorem \ref{thm:regularity-of-B}---used to establish $(1)$ and $(2)$; and Corollary \ref{cor:ell-1-conc-double circulant}, which implies $(3)$ (for $\kappa=\blue{\sqrt{\pi/2}}$).
\endproof
\end{tcolorbox}

\subsection{Proof of Theorem~\ref{thm:JLell1ell2Intro}}

\green{Let $T^{\prime}$ be the set of normalized differences defined in \eqref{eq:T-prime}. Let $A_I=\sqrt{\tfrac{\pi}{2}}A$ be the rescaled double circulant matrix with index set $I\subset [n]$. Consider independent selectors $\theta_1,\ldots,\theta_n$ satisfying $\bP(\theta_i=1)=1-\bP(\theta_i=0)=\frac{m}{n}$ and let $I_{\theta}=\{i\in [n] \ : \ \theta_i=1\}$ be the set of selected indices. Clearly,
$$\frac{1}{m}\E_{\theta}\|A_{I_{\theta}}z\|_1 = \frac{1}{n}\|A_{[n]}z\|_1$$
for any $z\in \R^n$ and hence
$$\sup_{z\in T'}\left|\frac{1}{m}\|A_{I_{\theta}}z\|_1 - 1\right| \leq \sup_{z\in T'}\left|\frac{1}{m}\|A_{I_{\theta}}z\|_1 - \frac{1}{m}\E_{\theta}\|A_{I_{\theta}}z\|_1\right| + \sup_{z\in T'}\left|\frac{1}{n}\|A_{[n]}z\|_1 - 1\right|.$$
By Corollary~\ref{cor:ell-1-conc-double circulant}, if $n\gtrsim c(\gamma)\epsilon^{-2}\log |T| \log^{6} n$ then
$$\sup_{z\in T'}\left|\frac{1}{n}\|A_{[n]}z\|_1 - 1\right|\leq \epsilon$$ 
with probability at least $1-n^{-\gamma}$.}\par 
\green{To control the first term, set $X_{\theta,z}=\frac{1}{m}\|A_{I_{\theta}}z\|_1 - \frac{1}{m}\E_{\theta}\|A_{I_{\theta}}z\|_1$ and let $\theta'$ be an independent copy of $\theta$. By a symmetrization argument (see (6.3) in \cite{LT91}), 
\begin{equation}
\label{eqn:tailSymm}
\bP_{\theta}\left(\sup_{z\in T'} |X_{\theta,z}|\geq 4\epsilon\right)\leq \bP_{\theta,\theta'}\left(\sup_{z\in T'} |X_{\theta,z}-X_{\theta',z}|\geq 2\epsilon\right) + \sup_{z\in T'} \bP_{\theta}(|X_{\theta,z}|\geq 2\epsilon).\end{equation}
Observe that $X_{\theta,z}-X_{\theta',z}$ has the same distribution as
$$\frac{1}{m}\sum_{i=1}^n \zeta_i(\theta_i-\theta_i')|\langle a_i,z\rangle|,$$
where the $a_i$ are the rows of $A_{[n]}$ and $\zeta$ is a Rademacher vector that is independent of all other random variables. Hence, it is standard to verify that
$$\bP_{\theta,\theta'}\left(\sup_{z\in T'} |X_{\theta,z}-X_{\theta',z}|\geq 2\epsilon\right)\leq 2\bP_{\theta,\zeta}\left(\sup_{z\in T'}\frac{1}{m}\sum_{i=1}^n \zeta_i\theta_i|\langle a_i,z\rangle|\geq \epsilon\right).$$
Observe that the event $\{m/2\leq |I_{\theta}|\leq 3m/2\}$ holds with $\theta$-probability at least $1-e^{-cm}$. On this event, Theorem~\ref{thm:regularity-of-B} shows that with probability at least $1-n^{-\gamma}$ with respect to $G \otimes \eps^\prime \otimes \eps^{\prime \prime}$
$$
B_{I_{\theta}}= \frac{1}{\sqrt{|I_{\theta}|}} R_{I_{\theta}} \Gamma_G D_{\eps^{\prime \prime}} \frac{1}{\sqrt{n}} \Gamma_{\eps^\prime}
$$
is $\rho$-regular for $\rho=c(\gamma)\log^{5/2}(n)/\sqrt{\nrows}$. Since $A_{I_{\theta}} = \sqrt{|I_{\theta}|} B_{I_{\theta}} D_\eps$, we can use Theorem~\ref{thm:Men21} to conclude that with probability at least $1-n^{-\gamma}$ with respect to $G \otimes \eps \otimes \eps^\prime \otimes \eps^{\prime \prime}$,
$$\sup_{z\in T'}\frac{1}{m}\sum_{i=1}^n \theta_i |\langle a_i,z\rangle|^2 = \sup_{z\in T'}\frac{1}{m}\|A_{I_{\theta}}z\|_2^2\leq c,$$
where $c$ is an absolute constant, provided that 
\begin{equation}
\label{eqn:condmNoepsilon}
m\geq c(\gamma) \log|T|\log^5(n).
\end{equation} 
Clearly, \eqref{eqn:condmNoepsilon} is satisfied when $m\geq c(\gamma) \epsilon^{-2}\log|T|$ and $\epsilon\leq \log^{-5/2}(n)$.  In particular, all of the above events hold simultaneously with probability at least $1-e^{-cm}-n^{-\gamma}$ with respect to $\theta\otimes G \otimes \eps \otimes \eps^\prime \otimes \eps^{\prime \prime}$. Conditioned on that event, Hoeffding's inequality implies that 
$$\bP_{\zeta}\left(\frac{1}{m}\sum_{i=1}^n \zeta_i\theta_i|\langle a_i,z\rangle|\geq \epsilon\right)\leq 2e^{-cm\epsilon^2}$$
for any $z\in T^{\prime}$. Recalling that $m\gtrsim \epsilon^{-2}\log|T|$, a union bound yields
$$\bP_{\zeta}\left(\sup_{z\in T'}\frac{1}{m}\sum_{i=1}^n \zeta_i\theta_i|\langle a_i,z\rangle|\geq \epsilon\right)\leq 2e^{-c'm\epsilon^2}.$$
In a similar, but simpler manner one can estimate the second term on the right hand side of \eqref{eqn:tailSymm}, which completes the proof.}  

\subsection{Proof of Theorem \ref{thm:ell-1-conc-double circulant}.}

\blue{As we noted previously, to prove Theorem~\ref{thm:ell-1-conc-double circulant} it remains to show that \eqref{eq:ell-1-conc-A-cond} holds with the parameters specified in \eqref{eq:ell-1-conc-A-condParam}. The random vectors $\eps$, $\eps^\prime$, and $\eps^{\prime \prime}$ each play a different role in the argument, leading to the somewhat cumbersome formulation of Theorem \ref{thm:ell-1-conc-double circulant}.}\par
Firstly, we show that for a typical realization of $\eps^{\prime} \otimes \eps^{\prime \prime}$, the matrix $W D_{\eps^{\prime \prime}}\Gamma_{\eps^\prime}$ is $\red{\rho}$-strongly regular for $\rho$ of the order of $n^{-1/2}\log^{\red{2}} n$. Clearly, that fact has nothing to with the identity of the set $T$, but rather only with the way in which the matrix $W D_{\eps^{\prime \prime}}\Gamma_{\eps^\prime}$ acts on sparse vectors. \blue{Secondly}, given a set $T$, Theorem \ref{thm:Men21} and Theorem \ref{thm:new} imply that (conditioned on the fact that $W D_{\eps^{\prime \prime}}\Gamma_{\eps^\prime}$ is $\red{\rho}$\red{-}strongly regular), with high probability with respect to $\eps$, $W D_{\eps^{\prime \prime}}\Gamma_{\eps^\prime} D_\eps$ `acts well' on $T$. \blue{These two steps are formulated in Theorem~\ref{thm:ell-1-conc-strong-regularity} and Corollary~\ref{cor:ell-1-conc-strong-regularity}.} 
\begin{Theorem} \label{thm:ell-1-conc-strong-regularity}
For $\gamma \geq 1$ there is a constant $c(\gamma)$ that depends only on $\gamma$ such that the following holds. With probability at least $1-n^{-\gamma}$ with respect to $\eps^\prime \otimes \eps^{\prime \prime}$, the matrix $n^{-1/2} W D_{\eps^{\prime \prime}}\Gamma_{\eps^\prime}$ is $\red{\rho}$-strongly regular for
$$
\rho = c(\gamma) \frac{\red{\log^{5/2} n}}{\sqrt{n}}.
$$
\end{Theorem}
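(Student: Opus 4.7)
I would verify the two conditions in Definition~\ref{def:regular} for the matrix $\tilde B := n^{-1/2} W D_{\eps^{\prime\prime}} \Gamma_{\eps^\prime}$: a restricted isometry estimate on $\Sigma_{r,n}$ and a well-spreading estimate $\sup_{x \in \Sigma_{r,n}} \|\tilde Bx\|_{[r]} \leq \rho\sqrt{r}$, simultaneously for every $1 \leq r \leq s_\rho$, where $s_\rho \lesssim n/\log^{5} n$.

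\textbf{Restricted isometry.} Since $W$ is unitary and $D_{\eps^{\prime\prime}}$ is an isometry on $\R^n$,
$$\|\tilde B x\|_2^2 \;=\; \tfrac{1}{n}\|\Gamma_{\eps^\prime} x\|_2^2 \;=\; \tfrac{1}{n}\|\Gamma_x \eps^\prime\|_2^2.$$
Hence controlling $\sup_{x\in \Sigma_{r,n}} \bigl|\|\tilde B x\|_2^2-\|x\|_2^2\bigr|$ is exactly what Theorem~\ref{thm:RIP-sparse-simple} delivers with $\xi = \eps^\prime$ (which is $1$-subgaussian): a bound of $\rho\sqrt r$ with $\rho = c(\gamma)\log^{2} n/\sqrt n$, valid for each fixed $r$ with probability at least $1 - 2\exp(-c\gamma\log^{4}n)$. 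A union bound over $1\leq r\leq n$ preserves this up to an event of probability $\geq 1-n^{-\gamma}$.

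\textbf{Well-spreading.} This is the harder half. Using the convolution--diagonalization identity $\Gamma_x\eps^\prime = \sqrt n\, U D_{Wx} W \eps^\prime$ with $U=W^{*}$, I would rewrite
$$\tilde B x \;=\; W D_{\eps^{\prime\prime}} W^{*} D_{Wx} W\eps^\prime \;=\; \mathcal U\, D_{\mathcal W x}\, \mathcal O\,\eps^\prime, \qquad \mathcal U := W D_{\eps^{\prime\prime}} W^{*},\ \ \mathcal W = \mathcal O := W.$$
This is precisely the format of Theorem~\ref{thm:s-normEstimates-sparse} with the subgaussian vector $\zeta = \eps^\prime$. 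Since $W$ is unitary, the hypothesis $\sup_{x\in\Sigma_{r,n}}\|\mathcal W x\|_2 \leq 2$ holds trivially, and $d_{\mathcal O}\leq 1$. What needs attention is the entrywise bound $d_{\mathcal U}=\sqrt n\max_{i,j}|\mathcal U_{ij}|$: unlike the Hadamard-type matrix $W$, the conjugated diagonal $\mathcal U$ is unitary but not Hadamard-type. However, for each pair $(i,j)$,
$$\mathcal U_{ij} \;=\; \sum_{k=1}^{n} \eps^{\prime\prime}_k W_{ik}\overline{W_{jk}}$$
is a sum of independent centred bounded random variables with $|W_{ik}\overline{W_{jk}}|\leq 1/n$ and $\|W_i\odot \overline{W_j}\|_2 \leq 1/\sqrt n$. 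Hoeffding's inequality together with a union bound over the $n^{2}$ pairs produces an event $\Omega_1$ of probability at least $1 - n^{-\gamma}$ on which $d_{\mathcal U} \leq c\sqrt{\gamma\log n}$.

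Conditioning on $\Omega_1$ and invoking Theorem~\ref{thm:s-normEstimates-sparse} with $u \sim \gamma\log n$, followed by a union bound over the polynomially many sparsity levels $1\leq r\leq s_\rho$, yields with probability at least $1-n^{-\gamma}$ with respect to $\eps^\prime$ that
$$\sup_{x\in \Sigma_{r,n}} \|\tilde B x\|_{[r]} \;\lesssim\; \sqrt{\tfrac{r}{n}}\cdot\sqrt{\log n}\cdot\bigl(\log(n)\log(r) + \sqrt u\bigr) \;\leq\; c(\gamma)\sqrt r \cdot\frac{\log^{5/2}n}{\sqrt n}.$$
Combining this with the RIP estimate from the first step and taking $\rho$ to be the larger of the two resulting constants delivers $\rho$-strong regularity with $\rho = c(\gamma)\log^{5/2}(n)/\sqrt n$, as claimed.

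\textbf{Main obstacle.} The crux is the non-Hadamard nature of $\mathcal U = WD_{\eps^{\prime\prime}}W^{*}$. The randomness supplied by $\eps^{\prime\prime}$ buys a bound $d_{\mathcal U}\lesssim \sqrt{\log n}$ rather than the constant enjoyed by Hadamard-type matrices; this extra $\sqrt{\log n}$ is precisely what turns the $\log^{2} n$ scaling that would otherwise be inherited from Theorem~\ref{thm:s-normEstimates-sparse} into the $\log^{5/2} n$ exponent appearing in the statement. Improving it would require exploiting cancellations within $\mathcal U D_{Wx} W\eps^\prime$ beyond the pointwise control of $\mathcal U$.
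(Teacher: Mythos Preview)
Your proposal is correct and follows essentially the same route as the paper: reduce regularity to Theorem~\ref{thm:RIP-sparse-simple} via the isometry of $W D_{\eps''}$, then for the well-spreading part rewrite $\tilde B x$ in the form $\mathcal U D_{\mathcal W x}\mathcal O\eps'$ with $\mathcal U=WD_{\eps''}W^*$, bound $d_{\mathcal U}\lesssim\sqrt{\gamma\log n}$ by Hoeffding plus a union bound (this is exactly the paper's Lemma~\ref{lem:estHadConst}), and invoke Theorem~\ref{thm:s-normEstimates-sparse}. The only cosmetic difference is that the paper takes $u\sim\gamma\log^4 n$ while you take $u\sim\gamma\log n$; both choices yield the same $\log^{5/2}n$ exponent, since the $\log(n)\log(r)$ term dominates $\sqrt u$ in either case.
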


We prove Theorem \ref{thm:ell-1-conc-strong-regularity} in the next section. We first show how it implies \eqref{eq:ell-1-conc-A-cond} and, hence, Theorem~\ref{thm:ell-1-conc-double circulant} as well.

\begin{Corollary} \label{cor:ell-1-conc-strong-regularity}
Set $\Upsilon=\rho \sqrt{n}$ and let $T \subset \R^n$. Assume that $\log n \leq d^*(T) \leq c_1 n/\Upsilon^2$ and set $r$ to satisfy that $d^*(T) \sim r \log(en/r)$. Then on the event from Theorem \ref{thm:ell-1-conc-strong-regularity}, with probability at least $1-2\exp(-c_2 u^2 d^*(T))$ with respect to $\eps$, we have that for any $t \in T$,
$$
\|n^{-1/2} W D_{\eps^{\prime \prime}}\Gamma_{\eps^\prime} D_\eps t\|_{[r]} \leq c_3 u^2 \Upsilon \red{\frac{\ell_*(T)}{\sqrt{n}}}
$$
\end{Corollary}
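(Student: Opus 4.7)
The plan is to apply Theorem~\ref{thm:new} directly to the matrix $B=n^{-1/2}WD_{\eps^{\prime\prime}}\Gamma_{\eps^\prime}$, which is $n\times n$, with column randomization by $D_\eps$ and with $k=r$. I condition on the event from Theorem~\ref{thm:ell-1-conc-strong-regularity}, on which $B$ is $\rho$-strongly regular for $\rho=c(\gamma)\log^{5/2}(n)/\sqrt{n}$. The hypothesis $\rho<1/\sqrt{\log(m+n)}$ required by Theorem~\ref{thm:new} (where $m=n$ here) holds for $n$ sufficiently large, since $\rho$ is only of poly-logarithmic order in $n$.

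With $R=\mathcal{R}(T)$ and $k=r$, the probability in Theorem~\ref{thm:new} becomes at least
$$1-2\exp\!\left(-cu^2\left[\ell_*^2(T)+\mathcal{R}^2(T)\, r\log(en/r)\right]/\mathcal{R}^2(T)\right).$$
The defining choice $d^*(T)\sim r\log(en/r)$ implies $\mathcal{R}^2(T)\,r\log(en/r)\sim \ell_*^2(T)$, so the probability simplifies to $1-2\exp(-c_2 u^2 d^*(T))$, matching what the corollary claims.

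For the bound, using Remark~\ref{rem:useful-presentation}, Theorem~\ref{thm:new} yields
$$\sup_{t\in T}\|BD_\eps t\|_{[r]} \leq c\, u^2 \max\{Q_r(T,R),\, R^{-1}Q_r^2(T,R)\},$$
where $Q_r(T,R)=\rho\bigl(\ell_*^2(T)+R^2 r\log(en/r)\bigr)^{1/2}\sim \rho\,\ell_*(T)$ by the same relation $d^*(T)\sim r\log(en/r)$. Consequently the first term is of order $\rho\,\ell_*(T)$, while the second is
$$R^{-1}Q_r^2(T,R)\ \sim\ \rho^2\,\ell_*^2(T)/\mathcal{R}(T)\ =\ \rho\,\ell_*(T)\cdot \rho\sqrt{d^*(T)}.$$
The range assumption $d^*(T)\leq c_1 n/\Upsilon^2 = c_1/\rho^2$ gives $\rho\sqrt{d^*(T)}\leq \sqrt{c_1}$, so the $\rho$-term dominates and one obtains
$$\sup_{t\in T}\|BD_\eps t\|_{[r]} \leq c_3\, u^2\, \rho\,\ell_*(T) \ =\ c_3\, u^2\, \Upsilon\, \frac{\ell_*(T)}{\sqrt{n}},$$
using $\Upsilon=\rho\sqrt{n}$. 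This is exactly the claimed inequality.

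No step is truly a main obstacle: the proof is entirely a matter of plugging the two complexity parameters $\ell_*^2(T)$ and $R^2 k\log(em/k)$ into Theorem~\ref{thm:new} and invoking the equivalence $d^*(T)\sim r\log(en/r)$ to collapse them, together with the upper bound on $d^*(T)$ to ensure that the linear-in-$\rho$ term dominates the quadratic one. The only subtlety worth a line of care is matching the "$m$" of Theorem~\ref{thm:new} with the number of rows $n$ of the matrix $B$ under consideration, so that $k\log(em/k)$ becomes $r\log(en/r)$.
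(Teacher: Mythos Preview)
Your proof is correct and follows essentially the same approach as the paper: condition on the strong regularity event, apply Theorem~\ref{thm:new} (via Remark~\ref{rem:useful-presentation}) with $k=r$ and $m=n$, use $d^*(T)\sim r\log(en/r)$ to collapse the two terms in $Q_r(T,R)$ to $\rho\,\ell_*(T)$, and invoke $d^*(T)\leq c_1 n/\Upsilon^2$ to ensure the linear term dominates the quadratic one. The paper writes $Q(T)\sim\rho\,\mathcal{R}(T)\sqrt{d^*(T)}$ where you write $\rho\,\ell_*(T)$, but these are of course the same quantity.
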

\proof 
On the event from Theorem~\ref{thm:ell-1-conc-strong-regularity}, Theorem~\ref{thm:new} and Remark~\ref{rem:useful-presentation} imply that for $T \subset \R^n$, with probability at least $1-2\exp(-c_0u^2[d^*(T) + r \log(en/r)])$ with respect to $\eps$, for every $t \in T$,
$$
\|n^{-1/2} W D_{\eps^{\prime \prime}}\Gamma_{\eps^\prime} D_\eps t\|_{[r]} \leq c_1 u^2 \max\{\red{Q}(T),{\cal R}^{-1}(T)\red{Q}^2(T)\},
$$
\red{where} 
$$\red{Q(T):=Q_r(T,{\cal R}(T)) = \rho {\cal R}(T)(d^*(T)+r\log(en/r))^{1/2}.}$$
\blue{By setting $r$ to satisfy that $d^*(T) \sim r \log(en/r)$, we have that $\red{Q}(T) \sim \rho {\cal R}(T) \sqrt{d^*(T)}$. Note that $\red{Q}(T) \geq {\cal R}^{-1}(T) \red{Q}^2(T)$ provided that $\red{Q}(T) \leq {\cal R}(T)$. Since $\Upsilon=\rho \sqrt{n}$, this condition is satisfied if $d^*(T) \leq n/\Upsilon^2$. This yields the asserted estimate.} 
\endproof

\subsection{Proof of Theorem \ref{thm:ell-1-conc-strong-regularity}.}

The proof follows two steps:
\vskip0.3cm
\noindent \underline{Step 1: Proof of regularity}.

The first step in the proof is to establish that with high probability with respect to $\eps^\prime \otimes \eps^{\prime \prime}$, the matrix $n^{-1/2} W D_{\eps^{\prime \prime}}\Gamma_{\eps^\prime}$ is regular, i.e., it acts in a norm preserving way on sparse vectors. Observe that for any $t \in \R^n$ and any realization of $\eps^{\prime \prime}$,
$$
\|n^{-1/2} W D_{\eps^{\prime \prime}}\Gamma_{\eps^\prime}\|_2 = \|n^{-1/2}\Gamma_{\eps^\prime}\|_2.
$$
Thus, it suffices to establish the regularity of $n^{-1/2}\Gamma_{\eps^\prime}$. \blue{This follows immediately from} Theorem~\ref{thm:RIP-sparse-simple}, which implies that:
\begin{tcolorbox}
With probability at least $1-2\exp(-\red{c_1}\gamma \log^4 n)$ with respect to $\eps^\prime$, the matrix $n^{-1/2}\Gamma_{\eps^\prime}$ is $\red{\rho}$-regular for
\begin{equation} \label{eq:circulant-regularity-2}
\rho =c_2(\gamma)\frac{\log^{\red{2}} n}{\sqrt{n}}.
\end{equation}
\end{tcolorbox}

\vskip0.3cm
\noindent \underline{Step 2: Proof of strong regularity}.

\blue{Recall that for} any $x \in \R^n$ we have
$\Gamma_{\eps^\prime}x = \Gamma_x \eps^\prime \ \ {\rm  and} \ \ \Gamma_x = \sqrt{n}UD_{Wx}O$, \red{where $U,W$, and $O$ are as in \eqref{eqn:OWUDef}}. Therefore,
$$
n^{-1/2} W D_{\eps^{\prime \prime}} \Gamma_{\eps^\prime} x = W D_{\eps^{\prime \prime}} UD_{Wx} O \eps^\prime.
$$
\blue{Note that this is a random vector of the form $\mathcal{U}D_{\mathcal{W}x} \mathcal{O}\xi$ for} 
$$
{\cal U}=W D_{\eps^{\prime \prime}} U, \ \ {\cal W}=W, \ \ {\cal O}=O \ \ {\rm and} \ \ \xi=\eps^\prime.
$$
\blue{Therefore, to establish strong regularity, we invoke Theorem~\ref{thm:s-normEstimates-sparse}. To that end, observe that the matrix ${\cal W}=W$ is of Hadamard-type; in particular, $d_{\mathcal{W}} = 1$ and trivially, $\sup_{x \in \Sigma_{r,n}} \|\mathcal{W}x\|_2 \leq 2$. To estimate 
$$d_{\mathcal{U}} = \sqrt{n}\max_{1 \leq i,j \leq n} |(W D_{\eps^{\prime \prime}} U)_{ij}|,$$
we use the following fact.}
\begin{Lemma} 
\label{lem:estHadConst}
\blue{There is a constant \red{$c>0$} such that the following holds for any $\gamma \geq 1$ : for any $V_1,V_2\in \C^{n\times n}$, with probability at least $1-n^{-\gamma}$}
$$
\blue{\max_{1 \leq i,j \leq n} |(V_1 D_{\eps^{\prime \prime}} V_2)_{ij}| \leq \red{c}\sqrt{\red{\gamma}\log n}\max_{1 \leq j \leq n}\|V_1^*e_j\|_2 \max_{1 \leq i,j \leq n} |(V_2)_{ij}|.}
$$
\end{Lemma}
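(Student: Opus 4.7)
The plan is to write each entry of $V_1 D_{\eps^{\prime\prime}} V_2$ as a Rademacher sum in the entries of $\eps^{\prime\prime}$, apply a scalar subgaussian tail bound, and conclude by a union bound over the $n^2$ pairs $(i,j)$.

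First, I observe that for every pair $(i,j)$,
\begin{equation*}
(V_1 D_{\eps^{\prime\prime}} V_2)_{ij} = \sum_{k=1}^n (V_1)_{ik} (V_2)_{kj}\, \eps^{\prime\prime}_k = \langle \eps^{\prime\prime}, v_{ij}\rangle,
\end{equation*}
where $v_{ij} \in \C^n$ has coordinates $(v_{ij})_k = (V_1)_{ik}(V_2)_{kj}$. Bounding coordinate-wise by the $\ell_\infty$-norm of $V_2$ and recognizing that $\sum_{k}|(V_1)_{ik}|^2 = \|V_1^* e_i\|_2^2$, I get
\begin{equation*}
\|v_{ij}\|_2^2 \;\le\; \Big(\max_{k,l}|(V_2)_{kl}|\Big)^2 \|V_1^* e_i\|_2^2 \;\le\; \Big(\max_{k,l}|(V_2)_{kl}|\Big)^2 \Big(\max_{l}\|V_1^* e_l\|_2\Big)^2.
\end{equation*}

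Next, for fixed $(i,j)$ I split $\langle \eps^{\prime\prime}, v_{ij}\rangle$ into real and imaginary parts, each of which is a Rademacher sum with real coefficients whose $\ell_2$-norm is at most $\|v_{ij}\|_2$. Hoeffding's inequality then gives, for any $t>0$,
\begin{equation*}
\PP\bigl(|\langle \eps^{\prime\prime}, v_{ij}\rangle| \ge t\,\|v_{ij}\|_2\bigr) \;\le\; 4\exp(-t^2/2).
\end{equation*}
Taking $t = c_0\sqrt{(\gamma+2)\log n}$ with $c_0$ a suitable absolute constant and union-bounding over the at most $n^2$ pairs $(i,j)$ yields a total failure probability of at most $n^{-\gamma}$. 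On the complementary event, for every $(i,j)$,
\begin{equation*}
|(V_1 D_{\eps^{\prime\prime}} V_2)_{ij}| \;\le\; c\sqrt{\gamma\log n}\;\max_{l}\|V_1^* e_l\|_2\,\max_{k,l}|(V_2)_{kl}|,
\end{equation*}
which is the claimed bound.

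There is essentially no technical obstacle here — the only mildly delicate point is allowing $V_1,V_2$ to be complex-valued, which is handled by the real/imaginary split. The value of the lemma comes from how it will be used: choosing $V_1 = W$ and $V_2 = U$ (both Hadamard-type, so that $\max|(V_2)_{kl}| \le 1/\sqrt{n}$ and $\|V_1^* e_l\|_2 = 1$) immediately produces the bound $d_{\mathcal{U}} \lesssim \sqrt{\gamma \log n}$ needed to feed ${\cal U}=WD_{\eps^{\prime\prime}}U$ into Theorem~\ref{thm:s-normEstimates-sparse}.
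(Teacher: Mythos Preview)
Your proof is correct and follows essentially the same route as the paper's: write each entry as a Rademacher sum $\langle \eps'',v_{ij}\rangle$, bound $\|v_{ij}\|_2$ by $\max_{l}\|V_1^*e_l\|_2\cdot\max_{k,l}|(V_2)_{kl}|$, apply Hoeffding's inequality, and union-bound over the $n^2$ entries. The only (minor) addition you make is the explicit real/imaginary split to handle complex $V_1,V_2$, which the paper leaves implicit.
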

The proof of Lemma~\ref{lem:estHadConst} is standard and is presented in Appendix~\ref{app:gaussian}.
\vskip0.3cm
\blue{Since $W$ and $U$ are Hadamard-type matrices, Lemma~\ref{lem:estHadConst} shows that there is an event with probability at least $1-n^{-\gamma}$ (with respect to $\eps^{\prime \prime}$) such that 
$$
d_{{\cal U}} \leq c_1\sqrt{\red{\gamma}\log n}.
$$
On that event, it follows from Theorem~\ref{thm:s-normEstimates-sparse} that} there is an absolute constant $c_2$, such that with probability at least $1-2\exp(-c_2u)$ with respect to $\eps^\prime$,
\begin{equation} \label{eq:ell-1-conc-circ-1-on-sparse-0}
\sup_{x\in \Sigma_{r,n}}\|W D_{\eps^{\prime \prime}} \Gamma_{\eps^\prime} x \|_{[r]} \leq \red{c_3} \sqrt{\frac{r}{n}} \cdot \sqrt{\log n}\left(\log(n)\log(r) +  \sqrt{u}\right).
\end{equation}

In particular, setting $u \sim \blue{\gamma} \log^4 n$ and taking the union bound over $1 \leq r \leq n$, we have that with probability at least $1-2\exp(-\red{c_2}\gamma \log^4 n)$ with respect to $\eps^\prime$,

\begin{equation} \label{eq:ell-1-conc-circ-1-on-sparse-1}
\sup_{x\in \Sigma_{r,n}}\|n^{-1/2} W D_{\eps^{\prime \prime}} \Gamma_{\eps^\prime} x \|_{[r]} \leq c_4(\gamma) \sqrt{\frac{r}{n}} \log^{5/2} n  \ \ \ {\rm for \ every \ } 1 \leq r \leq n.
\end{equation}
\blue{In particular:}
\begin{tcolorbox}
On an event with probability at least $1-n^{-\gamma}-\exp(-c_5\gamma \log^4 n)$ with respect to $\eps^\prime \otimes \eps^{\prime \prime}$, $n^{-1/2} W D_{\eps^{\prime \prime}} \Gamma_{\eps^\prime}$ is $\red{\rho}$-strongly regular for
$$
\rho = c_4(\gamma) \frac{\log^{\red{5/2}} n}{\sqrt{n}}.
$$
\end{tcolorbox}
That completes the proof of Theorem \ref{thm:ell-1-conc-strong-regularity}.
\endproof

\section*{Acknowledgements}

S.D.\ was supported by the Deutsche Forschungsgemeinschaft (DFG, German Research Foundation) under SPP 1798 (COSIP - Compressed Sensing in Information Processing) through project CoCoMiMo.  A.S.\ acknowledges support by the Fonds de la Recherche Scientifique - FNRS under Grant n$^{\circ}$ T.0136.20 (Learn2Sense).

\appendix

\section{Proof of Lemma~\ref{lem:estHadConst}} \label{app:gaussian}

\blue{For any $1\leq i,j\leq n$,
$$(V_1 D_{\eps^{\prime \prime}} V_2)_{ij} = \langle V_1 D_{\eps^{\prime \prime}} V_2e_i,e_j\rangle = \langle \eps^{\prime \prime}, D_{V_2e_i}^*V_1^*e_j\rangle.$$
Moreover, 
$$\|D_{V_2e_i}^*V_1^*e_j\|_2\leq \|D_{V_2e_i}\|_{2\to 2} \|V_1^*e_j\|_2 = \|V_2e_i\|_{\infty}  \|V_1^*e_j\|_2 \leq \max_{1\leq j\leq n}  \|V_1^*e_j\|_2 \max_{1 \leq i,j \leq n}|(V_2)_{ij}|.$$ 
By Hoeffding's inequality, for any $u>0$,
$$\bP\Big(|\langle \eps^{\prime \prime}, D_{V_2e_i}^*V_1^*e_j\rangle|\geq u \max_{1\leq j\leq n} \|V_1^*e_j\|_2 \max_{1 \leq i,j \leq n}|(V_2)_{ij}|\Big) \leq 2\exp(-u^2/2).$$
The result now follows by taking a union bound over all $i$ and $j$.}  

\section{Proof of Theorem \ref{thm:regularity-of-B}} \label{sec:strong-regularity-double-circ-proof}

The goal is to show that with probability at least $1-n^{-\gamma}$ with respect to $G \otimes \eps^{\prime} \otimes \eps^{\prime \prime}$, the matrix $B=\nrows^{-1/2} R_I \Gamma_G D_{\eps^{\prime \prime}} n^{-1/2}\Gamma_{\eps^\prime}$
is $\red{\rho}$-strongly regular for $\rho \sim \red{c}(\gamma) \nrows^{-1/2}\log^{\red{5/2}} n$.

Recall that
$\Psi=n^{-1/2} D_{\eps^{\prime \prime}} \Gamma_{\eps^\prime}$, and thus, for every $x \in \R^n$,
$$
Bx=\frac{1}{\sqrt{\nrows}} R_I \Gamma_G \Psi x = \frac{1}{\sqrt{\nrows}} R_I \Gamma_{\Psi x} G.
$$

\subsection{\blue{Regularity of $B$}}

\blue{As we have noted in \eqref{eq:RIP-B-2},} to establish regularity it suffices to estimate
\begin{align} \label{eq:RIP-B-2-app}
& \sup_{x \in \Sigma_{r,n}} \left| \ \|Bx\|_2^2 - \|x\|_2^2 \ \right| \nonumber
\\
\leq & \sup_{x \in \Sigma_{r,n}}  \left| \ \left\|\frac{1}{\sqrt{\nrows}} R_I \Gamma_{\Psi x} G \right\|_2^2 - \E_G \left\|\frac{1}{\sqrt{\nrows}} R_I \Gamma_{\Psi x} G \right\|_2^2 \ \right| + \sup_{x \in \Sigma_{r,n}} \left| \left\|\frac{1}{\sqrt{n}} \Gamma_x \eps^\prime \right\|_2^2 - \blue{\|x\|_2^2} \ \right|.
\end{align}
As it happens, the proof that $n^{-1/2}\Gamma_{\eps^\prime}$ is $\red{\rho}$-regular for $\rho \sim \red{c}(\gamma) \frac{\log^{\red{2}} n}{\sqrt{n}}$ is standard and was used previously in this presentation (see \eqref{eq:circulant-regularity-2}). Hence, all that remains is to estimate
$$
\sup_{x \in \Sigma_{r,n}} \left| \ \left\|\frac{1}{\sqrt{\nrows}} R_I \Gamma_{\Psi x} G \right\|_2^2 - \E_G \left\|\frac{1}{\sqrt{\nrows}} R_I \Gamma_{\Psi x} G \right\|_2^2 \ \right|
$$
\blue{for $1 \leq r \leq \nrows$}. To that end we invoke Theorem~\ref{thm:KMR14} for the class of matrices
$$
{\cal A}_r = \left\{ R_I \Gamma_{\Psi x} : x \in \Sigma_{r,n} \right\}
$$
in the case $\xi=G$. To estimate the quantities in Theorem~\ref{thm:KMR14} we follow an almost identical argument to the one used in \blue{\cite{KMR14}}. We will therefore only outline it here. Let $x,y \in \R^n$. Then
$$
\|R_I \Gamma_{\Psi x} - R_I \Gamma_{\Psi y}\|_{2 \to 2} \leq \|\Gamma_{\Psi(x-y)}\|_{2 \to 2} = \sqrt{n}\|\Psi(x-y)\|_{\infty},
$$
and \blue{in particular}
$$
\sup_{x \in \Sigma_{r,n}} \|R_I \Gamma_{\Psi x}\|_{2 \to 2} \leq \sup_{x \in \Sigma_{r,n}} \sqrt{n}\|\Psi x\|_{\infty}.
$$
Set $\blue{\|x\|:=\sqrt{n}\|\Psi x\|_\infty}$. Estimating the $\gamma_2$-functional by an entropy integral (see e.g.\ \cite{Tal14}), we find that for an absolute constant $c_0$,
\begin{equation}
\label{eqn:entropyIntEst}
\gamma_2({\cal A}_r,\| \cdot \|_{2 \to 2}) \leq \gamma_2(\Sigma_{r,n},\blue{\|\cdot\|}) \leq c_0\int_0^\infty \log^{1/2} {\cal N}(\Sigma_{r,n}, \blue{\|\cdot\|,u}) \ du.
\end{equation}
\blue{To estimate the right hand side, we use two entropic estimates. The first is based on Maurey's Lemma and is essentially due to Carl \cite{Car85} (see also \cite{KMR14} for a proof).}
\begin{Lemma} \label{lemma:covering-app}
There exists an absolute constant $c$ such that the following holds. \blue{Let $\| \cdot \|$ be a norm on $\R^n$.} Let $U \subset \R^n$ be a finite set and assume that for every $1 \leq k \leq |U|$ and every subset $\{u_1,...,u_k\} \subset U$ of cardinality $k$, $\E_\eps \|\sum_{i=1}^k \eps_i u_i \| \leq \alpha \sqrt{k}$. Then for every $t>0$,
$$
\log {\cal N}({\rm conv}(U), \|\cdot\|, t) \leq c\left(\frac{\alpha}{t}\right)^{\blue{2}} \log |U|.
$$
\end{Lemma}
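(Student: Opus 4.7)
The plan is to use Maurey's empirical method (probabilistic selection). Set $N=|U|$. Given $x\in {\rm conv}(U)$, write $x=\sum_{i=1}^N \lambda_i u_i$ with $\lambda_i\geq 0$ and $\sum_i \lambda_i=1$, view $(\lambda_i)$ as a probability measure on $U$, and sample $Z_1,\ldots,Z_k$ i.i.d.\ from it. Set $\hat x=k^{-1}\sum_{j=1}^k Z_j$; the idea is to show that $\E_Z\|\hat x - x\|$ is small and then count the number of distinct possible values of $\hat x$ arising from $U^k$.

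A standard symmetrization argument (introducing an independent copy of $Z$ followed by Rademacher signs) yields
\[
\E_Z\|\hat x - x\| \leq \frac{2}{k}\,\E_{Z,\eps}\Big\|\sum_{j=1}^k \eps_j Z_j\Big\|.
\]
Conditioning on $Z\in U^k$ and applying the hypothesis---after extending it from distinct $k$-subsets of $U$ to arbitrary $k$-tuples in $U$ at the cost of an absolute constant---gives $\E_\eps\|\sum_j \eps_j Z_j\|\lesssim \alpha\sqrt{k}$, so $\E_Z\|\hat x - x\|\lesssim \alpha/\sqrt{k}$. By the probabilistic method, for each $x$ there exists at least one realization $(z_1,\ldots,z_k)\in U^k$ with $\|k^{-1}\sum_j z_j - x\|\lesssim \alpha/\sqrt{k}$. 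The collection of all empirical averages $k^{-1}\sum_j z_j$ as $(z_j)$ ranges over $U^k$ has cardinality at most $N^k$, so it forms a $(c\alpha/\sqrt{k})$-net of ${\rm conv}(U)$ in $\|\cdot\|$. Choosing $k\sim(\alpha/t)^2$ yields $\log\mathcal{N}({\rm conv}(U),\|\cdot\|,t)\leq k\log N\lesssim (\alpha/t)^2\log|U|$, as claimed.

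The main technical obstacle is the passage from the hypothesis for distinct $k$-subsets of $U$ to arbitrary $k$-tuples in $U$. One way is as follows: if the distinct values among $Z_1,\ldots,Z_k$ are $v_1,\ldots,v_{k'}$ (with $k'\leq k$) and respective multiplicities $m_1,\ldots,m_{k'}$, then $\sum_j \eps_j Z_j=\sum_{i=1}^{k'} S_i v_i$, where each $S_i$ is a signed sum of $m_i$ independent Rademacher signs, in particular with $\E|S_i|\leq\sqrt{m_i}$. Using the symmetry $S_i \stackrel{d}{=} \eps_i|S_i|$, conditioning on $|S_1|,\ldots,|S_{k'}|$, and invoking the contraction principle together with Cauchy--Schwarz reduces the estimate to a Rademacher average over the distinct values $v_1,\ldots,v_{k'}$---to which the hypothesis applies and yields the factor $\alpha\sqrt{k'}\leq\alpha\sqrt{k}$, up to an absolute constant. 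Everything else in the argument (symmetrization, probabilistic existence of a good realization, and counting realizations) is standard.
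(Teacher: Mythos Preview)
The paper does not supply its own proof of this lemma; it attributes the result to Carl \cite{Car85} and points to \cite{KMR14} for a proof. Your approach---Maurey's empirical method with i.i.d.\ sampling from the barycentric representation, followed by symmetrization and a counting argument---is precisely the standard argument found in those references, so at the level of overall strategy there is nothing to compare.

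The one genuine issue is the step you flag yourself: passing from the hypothesis on \emph{distinct} $k$-subsets of $U$ to arbitrary $k$-tuples with repetition. Your proposed fix (write $\sum_j\eps_jZ_j=\sum_i S_i v_i$ with $S_i\stackrel{d}{=}\eps_i'|S_i|$, condition on $|S_i|$, then use ``contraction plus Cauchy--Schwarz'') does not obviously deliver an absolute constant. The natural reading---contract the coefficients $|S_i|$ to $1$ and apply the hypothesis to the distinct $v_i$'s---produces a factor $\E[\max_i|S_i|]\cdot\alpha\sqrt{k'}$; when the multiplicities are roughly equal ($m_i\approx k/k'$) this is of order $\alpha\sqrt{k\log k'}$, not $\alpha\sqrt{k}$. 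Neither the observation $\E|S_i|\le\sqrt{m_i}$ nor a layer decomposition $|S_i|=\sum_{\ell\ge1}\mathbb{1}[|S_i|\ge\ell]$ combined with Cauchy--Schwarz removes this logarithmic loss. If you have a sharper argument in mind, it needs to be spelled out.

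That said, this is a formulation issue rather than a substantive gap for the paper's purposes: in the application (and in \cite{KMR14}) the Rademacher bound is established for \emph{arbitrary} vectors $x_1,\ldots,x_k$ via the sub-Gaussian estimate $\E\|\sum_j\eps_jx_j\|_\infty\lesssim\sqrt{\log n}\,(\sum_j\|x_j\|_\infty^2)^{1/2}$, which makes no distinctness assumption. With the hypothesis read as holding for all $k$-tuples in $U$ (rather than only distinct subsets), your Maurey argument goes through verbatim and no reduction step is needed.
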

In our case, $\Sigma_{r,n} \subset {\rm conv}(U)$, where $U = \{\pm 2 \sqrt{r}e_i : 1 \leq i \leq n\}$, and $\|x\| = \sqrt{n}\|\Psi x\|_\infty$. Note that for any $x_1,...,x_k \in \R^n$ 
$$
\E \left\|\sum_{i=1}^k \eps_i x_i \right\|_{\infty} \leq c_0 \sqrt{\log n} \left(\sum_{i=1}^k \|x_i\|_\infty^2 \right)^{1/2}.
$$
Hence, if $J \subset \{1,...,n\}$ and $|J|=k$, we have that
\begin{align*}
\E_\eps \left\| \sum_{i\in J} \eps_i e_i \right\| & = \E_\eps \left\| \sum_{i\in J} \eps_i \sqrt{n}\Psi e_i \right\|_{\infty} \leq c_0 \sqrt{\log n} \left(\sum_{i\in J} \|\sqrt{n}\Psi e_i\|_{\infty}^2 \right)^{1/2}
\\
& \leq c_0 \sqrt{\log n} \max_{1 \leq i,j \leq n} \sqrt{n}|\Psi_{ij}| \cdot \sqrt{k}.
\end{align*}
\blue{Now, condition on the event on which 
\begin{equation}
\label{eqn:hadEstPsi}
\max_{1 \leq i,j \leq n} \sqrt{n} |\Psi_{ij}| \leq c_1\sqrt{\red{\gamma}\log n}.
\end{equation}
Since $\Psi=n^{-1/2} D_{\eps^{\prime \prime}} \Gamma_{\eps^\prime}$, Lemma~\ref{lem:estHadConst} (with $V_1=\operatorname{Id}_{\red{n}}$ and $V_2=\frac{1}{\sqrt{n}}\Gamma_{\eps^{\prime}}$) shows that this event holds with probability at least $\red{1-n^{-\gamma}}$ with respect to $\eps^{\prime \prime}$. On that event, Lemma~\ref{lemma:covering-app} with 
$$
\alpha=c_0 \sqrt{r} \sqrt{\log n} \max_{1 \leq i,j \leq n} \sqrt{n}|\Psi_{ij}|\leq \red{c_2(\gamma)} \sqrt{r} \log n
$$ 
implies that} for every $t >0$
$$
\log {\cal N}(\Sigma_{r,n}, \blue{\|\cdot\|, t}) \leq \red{c_3}(\gamma) \frac{r}{t^2} \log^{\blue{3}} n.
$$

The second entropic estimate \blue{we require is volumetric:} $\Sigma_{r,n}$ is the union of the Euclidean unit balls $B_2^J$ that are supported on sets $J \subset \{1,...,n\}$ where $|J|=r$. If $x,y \in B_2^J$ then
\begin{align*}
\|x-y\|= & \sqrt{n}\|\Psi(x-y)\|_{\infty} \leq \max_{1 \leq i \leq n} |\sqrt{n} \inr{\Psi^*e_i,x-y}| \leq \max_{1 \leq i,j \leq n} \sqrt{n}|\Psi_{ij}| \|x-y\|_1
\\
\leq & \max_{1 \leq i,j \leq n} \sqrt{n}|\Psi_{ij}| \sqrt{r} \|x-y\|_2 \leq \red{c_1}(\gamma)\sqrt{\log n} \sqrt{r} \|x-y\|_2,
\end{align*}
\blue{where we again used \eqref{eqn:hadEstPsi}.} Hence, for any $t >0$
\begin{equation*}
\log {\cal N}(\Sigma_{r,n},\blue{\|\cdot\|,t}) \blue{=} \log {\cal N}\left(\cup_{|J|=r} B_2^J, \blue{\|\cdot\|,t}\right) \leq \max_{|J|=r} \log {\cal N}(B_2^J, \blue{\|\cdot\|_2,t^\prime} )+ r \log(en/r),
\end{equation*}
where
$$
t^\prime = \frac{t}{\red{c_1}(\gamma)\sqrt{\log n} \sqrt{r}}.
$$
Now the entropy estimate follows from a standard volumetric argument.

Conditioned on \blue{the event \eqref{eqn:hadEstPsi}} and using these two entropic estimates for large and small $u$, respectively, in the entropy integral in \eqref{eqn:entropyIntEst}, it follows from a standard computation that for any $1 \leq r \leq n$,
$$
\gamma_2({\cal A}_r,\| \cdot \|_{2 \to 2}) \leq c(\gamma) \sqrt{r} \log^{\blue{5/2}} n. 
$$
\blue{A similar argument shows that $d_{2 \to 2} ({\cal A}_r) \leq c(\gamma) \sqrt{r} \sqrt{\log n}$}.\par
\blue{Finally, let us estimate 
$$d_{HS}({\cal A}_r) = \sup_{x \in \Sigma_{r,n}} \|R_I \Gamma_{\Psi x}\|_{HS} \leq \sqrt{m}\sup_{x \in \Sigma_{r,n}} \|\Psi x\|_2.$$
Observe that
\begin{equation*}
\sup_{x \in \Sigma_{r,n}} \|\Psi x\|_2 = \frac{1}{\sqrt{n}} \sup_{x \in \Sigma_{r,n}} \|\Gamma_{\eps^\prime}x\|_2.
\end{equation*}
We have that $\E\|\Gamma_x \eps^\prime\|_2^2=n\|x\|_2^2 \leq n$, and if
$$
a:=\sup_{x \in \Sigma_{r,n}} \left| \ \|\Gamma_x \eps^\prime\|_2^2 -\E\|\Gamma_x \eps^\prime\|_2^2 \ \right|,
$$
then
$$
\frac{1}{\sqrt{n}} \sup_{x \in \Sigma_{r,n}} \|\Gamma_{\eps^\prime}x\|_2 \leq \frac{1}{\sqrt{n}}\left( a + n \right)^{1/2}.
$$
Using \red{Theorem~\ref{thm:RIP-sparse-simple}} it follows that with probability at least $1-2\exp(-\red{c_0\gamma\log^4(n)})$ with respect to $\eps^\prime$, for all $1 \leq r  \leq \nrows$
$$
\sup_{x \in \Sigma_{r,n}} \left| \ \|\Gamma_x \eps^\prime\|_2^2 -\E\|\Gamma_x \eps^\prime\|_2^2 \ \right| \leq n,
$$
\red{provided that $n\geq c_1\gamma m\log^4(n)$,} implying that
\begin{equation}
\label{eqn:Psix2est}
\sup_{x \in \Sigma_{r,n}} \|\Psi x\|_2 = \frac{1}{\sqrt{n}} \sup_{x \in \Sigma_{r,n}} \|\Gamma_{\eps^\prime}x\|_2 \leq 2.
\end{equation}
Therefore, conditioned on the above event it follows that}
$$
d_{HS}({\cal A}_r) \leq \sqrt{\nrows}.
$$
\blue{Combining these estimates with Theorem~\ref{thm:KMR14} \red{and a union bound over all $1 \leq r  \leq \nrows$}, we have that} with probability at least $1-2\exp(-c_3 \gamma \log^4 n)$ with respect to $G$, the matrix $B$ is $\red{\rho}$-regular for $\rho=c(\gamma)\frac{\log^{\red{5/2}} n}{\sqrt{\nrows}}$.

\subsection{Strong regularity of $B$}
To complete the proof of Theorem \ref{thm:regularity-of-B} one has to show that for every $1 \leq r \leq \nrows$,
$$
\sup_{x \in \Sigma_{r,n}} \left\|\frac{1}{\sqrt{\nrows}} R_I \Gamma_{\Psi x} G\right\|_{[r]} \leq \rho \sqrt{r}.
$$
To that end, it suffices to prove that
$$
\sup_{x \in \Sigma_{r,n}} \left\|\frac{1}{\sqrt{\nrows}} \Gamma_{\Psi x} G\right\|_{[r]} = \sqrt{\frac{n}{\nrows}} \sup_{x \in \Sigma_{r,n}} \|UD_{W\Psi x}OG\|_{[r]} \leq \rho \sqrt{r}.
$$
\blue{Thus, the proof of Theorem \ref{thm:regularity-of-B} is completed once the following lemma is established.}
\begin{Lemma} \label{lemma:strong-regularity-circ}
For $\gamma>1$ there exist constants $c_0$ and $c_1$ \red{depending only (polynomially) on $\gamma$} such that the following holds. Let $n \geq c_0\nrows \log^{\red{4}} n$. With probability at least $1-n^{-\gamma}$ with respect to $G \otimes \eps^{\prime} \otimes \eps^{\prime \prime}$, for every $1 \leq r \leq \nrows$,
$$
\sup_{x \in \Sigma_{r,n}} \|UD_{W\Psi x}OG\|_{[r]} \leq c_1 \sqrt{\frac{r}{n}}\log^{5/2} n.
$$
\end{Lemma}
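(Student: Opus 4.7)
\noindent\textbf{Proof proposal for Lemma~\ref{lemma:strong-regularity-circ}.} The plan is to apply Theorem~\ref{thm:s-normEstimates-sparse} conditionally on the pair $(\eps^\prime,\eps^{\prime\prime})$, in the setup $\mathcal{U}=U$, $\mathcal{O}=O$, $\mathcal{W}=W\Psi$, and $\zeta=G$. Since $U,O$ are of Hadamard-type, $d_{\mathcal{U}}\leq 1$; moreover, since $W$ is unitary, $\|W\Psi x\|_2=\|\Psi x\|_2$, so the hypothesis $\sup_{x\in \Sigma_{r,n}}\|W\Psi x\|_2\leq 2$ is exactly the estimate \eqref{eqn:Psix2est} that was already established (in the course of proving the regularity of $B$) with probability at least $1-2\exp(-c_0\gamma\log^4 n)$ with respect to $\eps^\prime$, uniformly in $1\leq r\leq m$, provided $n\geq c_1\gamma m\log^4 n$.

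The main work is to bound $d_{W\Psi}=\sqrt{n}\max_{i,j}|(W\Psi)_{ij}|$. Writing $W\Psi=W D_{\eps^{\prime\prime}}(n^{-1/2}\Gamma_{\eps^\prime})$ and conditioning on $\eps^\prime$, we apply Lemma~\ref{lem:estHadConst} with $V_1=W$ and $V_2=n^{-1/2}\Gamma_{\eps^\prime}$. Since $W$ is unitary, $\max_j\|W^\ast e_j\|_2=1$; since the entries of $\Gamma_{\eps^\prime}$ are $\pm 1$, $\max_{i,j}|(V_2)_{ij}|=1/\sqrt{n}$. Lemma~\ref{lem:estHadConst} therefore yields
$$
\max_{i,j}|(W\Psi)_{ij}|\leq c\sqrt{\gamma\log n}/\sqrt{n}
$$
with probability at least $1-n^{-\gamma}$ with respect to $\eps^{\prime\prime}$ (and hence unconditionally by Fubini). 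In particular $d_{W\Psi}\leq c\sqrt{\gamma\log n}$ on this event. I would expect this step — certifying that $W\Psi$ is a ``near-Hadamard'' matrix despite its double randomness — to be the main (though not severe) obstacle, since it is what turns an abstract $(\eps^\prime,\eps^{\prime\prime})$-dependent matrix into one amenable to Theorem~\ref{thm:s-normEstimates-sparse}.

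Combining these two events, condition on a realization of $(\eps^\prime,\eps^{\prime\prime})$ in a set of probability at least $1-2n^{-\gamma}$ (up to relabelling $\gamma$) on which both $\sup_{x\in\Sigma_{r,n}}\|W\Psi x\|_2\leq 2$ for all $1\leq r\leq m$ and $d_{W\Psi}\leq c\sqrt{\gamma\log n}$. Then, for each fixed $1\leq r\leq m$, Theorem~\ref{thm:s-normEstimates-sparse} gives that with probability at least $1-e^{-c_2 u}$ with respect to $G$,
$$
\sup_{x\in\Sigma_{r,n}}\|UD_{W\Psi x}OG\|_{[r]}\leq c_3\sqrt{\tfrac{r}{n}}\sqrt{\gamma\log n}\bigl(\log(n)\log(r)+\sqrt{u}\bigr).
$$
Choosing $u\sim\gamma\log^4 n$ and taking a union bound over the $m\leq n$ values of $r$ absorbs the loss into the constant, yielding
$$
\sup_{x\in\Sigma_{r,n}}\|UD_{W\Psi x}OG\|_{[r]}\leq c_4(\gamma)\sqrt{\tfrac{r}{n}}\log^{5/2}n\quad\text{for all }1\leq r\leq m,
$$
with overall probability at least $1-n^{-\gamma}$ with respect to $G\otimes\eps^\prime\otimes\eps^{\prime\prime}$. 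This completes the proof.
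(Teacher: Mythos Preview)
Your proposal is correct and follows essentially the same approach as the paper: apply Theorem~\ref{thm:s-normEstimates-sparse} with $\mathcal{U}=U$, $\mathcal{O}=O$, $\mathcal{W}=W\Psi$, $\zeta=G$, verify the hypothesis $\sup_{x\in\Sigma_{r,n}}\|W\Psi x\|_2\leq 2$ via \eqref{eqn:Psix2est}, bound $d_{W\Psi}$ via Lemma~\ref{lem:estHadConst} with $V_1=W$ and $V_2=n^{-1/2}\Gamma_{\eps'}$, and conclude by a union bound over $r$ with $u\sim\gamma\log^4 n$. The paper's proof is identical in structure, only slightly more terse at the final step.
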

\proof \blue{Using the notation of Theorem~\ref{thm:s-normEstimates-sparse}, 
$$UD_{W\Psi x}OG=\mathcal{U}D_{\mathcal{W}x} \mathcal{O}\xi,$$
where 
$$
{\cal U}=U, \ \ {\cal W}=W\Psi, \ \ {\cal O}=O \ \ {\rm and} \ \ \xi=G.
$$
Just as in \eqref{eqn:Psix2est}, with probability at least $1-2\exp(-\red{c_0\gamma \log^4(n)})$ with respect to $\eps^\prime$, for all $1 \leq r  \leq \nrows$,
\begin{equation*}
\sup_{x \in \Sigma_{r,n}} \|{\cal W} x\|_2 = \sup_{x \in \Sigma_{r,n}} \|W\Psi x\|_2 = \frac{1}{\sqrt{n}} \sup_{x \in \Sigma_{r,n}} \|\Gamma_{\eps^\prime}x\|_2 \leq 2.
\end{equation*}
Moreover, by Lemma~\ref{lem:estHadConst} (with $V_1=W$ and $V_2=\frac{1}{\sqrt{n}}\Gamma_{\eps^{\prime}}$) 
\begin{equation*}
d_{\mathcal{W}}=\max_{1 \leq i,j \leq n} \sqrt{n} |(W\Psi)_{ij}| \leq c_1 \sqrt{\red{\gamma}\log n}
\end{equation*}
with probability at least $1-\red{n^{-\gamma}}$ with respect to $\eps^{\prime \prime}$. The result now follows by applying Theorem~\ref{thm:s-normEstimates-sparse} conditioned on those two events.} 
\endproof

\end{document}